\numberwithin{equation}{section}
\newtheorem{theorem}{Theorem}[section]
\newtheorem{lemma}[theorem]{Lemma}
\newtheorem{proposition}[theorem]{Proposition}
\theoremstyle{remark}
\newtheorem{remark}{Remark}[section]
\DeclareMathOperator{\Div}{div}
\renewcommand{\geq}{\geqslant}
\renewcommand{\ge}{\geqslant}
\renewcommand{\leq}{\leqslant}
\renewcommand{\le}{\leqslant}
\newcommand{\R}{\ensuremath{\mathbb{R}}}
\newcommand{\dd}{\mathrm{d}}
\newcommand{\Id}{\ensuremath{\mathrm{Id}}}
\renewcommand{\Re}{\operatorname{Re}}
\begin{document}
\title[Euler-Alignment System With Pressure]
{Global well-posedness and asymptotic behavior for the Euler-alignment system with pressure}

\author{Xiang Bai}
\address{Academy of Mathematics and Systems Science, Chinese Academy of Sciences, Beijing 100190, P.R. China}
\email{xiangbai@mail.bnu.edu.cn}
\author{Changhui Tan}
\address{Department of Mathematics, University of South Carolina, Columbia SC 29208, USA}
\email{tan@math.sc.edu}
\author{Liutang Xue}
\address{Laboratory of Mathematics and Complex Systems (MOE), School of Mathematical Sciences, Beijing Normal University, Beijing 100875, P.R. China}
\email{xuelt@bnu.edu.cn}
\subjclass[2010]{35Q31, 35R11, 76N10, 35B40.}
\keywords{Euler-alignment system, fractional diffusion, global well-posedness, asymptotic behavior, optimal decay rates}
\thanks{{\it Acknowledgments.} C Tan was partially supported by NSF Grants DMS-2108264 and DMS-2238219. L Xue was partially supported by National Key Research and Development Program of China (No. 2020YFA0712900) and NSFC (No. 12271045).}

\begin{abstract}
We study the Cauchy problem of the compressible Euler system with strongly singular velocity alignment. We establish a global well-posedness theory for the system with small smooth initial data. Additionally, we derive asymptotic emergent behaviors for the system, providing time decay estimates with optimal decay rates. Notably, the optimal decay rate we obtain does not align with the corresponding fractional heat equation within our considered range, where the parameter $\alpha\in(0,1)$. This highlights the distinct feature of the alignment operator.

\end{abstract}

\maketitle \centerline{\date}
\section{Introduction}
We consider the Cauchy problem of the following Euler-alignment system in $\R_+\times\R^N$,
\begin{equation}\label{eq.EAS}
\begin{cases}
  \partial_t\rho+\Div(\rho u)=0,\\
  \partial_t(\rho u)+\Div(\rho u\otimes u) + \nabla P(\rho)=\mathcal{D}(u,\rho),\\
  (\rho,u)|_{t=0}=(\rho_0,u_0).
\end{cases}
\end{equation}
Here, $\rho$ is the density, and $u=(u_1,\cdots, u_N)$ is the velocity field. $P(\rho)$ stands for the pressure, given by the power law
\begin{equation}\label{eq:pressure}
  P(\rho)=\rho^{\gamma},\quad \gamma\ge 1.
\end{equation}
The pressure is known as isothermal when $\gamma=1$, and isentropic when $\gamma>1$.
The term $\mathcal{D}(u,\rho)$ represents the nonlocal velocity alignment.
It takes the form:
\begin{align}\label{eq:Du-rho}
  \mathcal{D}(u,\rho)(t,x)=-\rho(t,x)\int_{\R^N}\phi(x-y)\big(u(t,x)-u(t,y)\big)\rho(t,y)\dd y,
\end{align}
where $\phi$ is called the \emph{communication protocol}, which models the strength of the pairwise alignment interaction. 

The Euler-alignment system, represented by \eqref{eq.EAS}, serves as a model for capturing the collective behaviors exhibited by animal swarms. Over the past decade, there has been a growing interest in the literature regarding the analysis of the Euler-alignment system, including global wellposedness and the large time flocking behavior. See e.g. \cite{tadmor2014critical,carrillo2016critical,he2017global,choi2019global,tan2020euler,shvydkoy2021dynamics,lear2022existence,leslie2023sticky,black2024asymptotic}.

We are particularly interested in a specific family of communication protocols known as \emph{strongly singular communication}, wherein the function $\phi$ exhibits a non-integrable singularity at the origin. A prototypical instance of such protocols is given by:
\begin{equation}\label{eq:phialpha}
  \phi(x) = \phi_\alpha (x) =\mu\,\frac{ c_{\alpha,N} }{|x|^{N+\alpha}}, \quad\text{where}\quad\mu>0,\quad
  c_{\alpha,N} = \tfrac{2^{\alpha} \Gamma(\frac{N+\alpha}{2})}{\pi^{N/2}\Gamma(-\frac\alpha2)},\quad \alpha\in(0,2).
\end{equation}
It is evident that the velocity alignment term $\mathcal{D}(u,\rho)$ can be expressed in a commutator form:
\begin{equation}\label{eq:alignment}
  \mathcal{D}(u,\rho)= -\mu\rho \big(\Lambda^\alpha (\rho u)-u\Lambda^\alpha \rho\big).
\end{equation}
The presence of the singularity introduces dissipative characteristics to the system. Specifically, when we enforce $\rho\equiv1$, the alignment term transforms into the fractional Laplacian:
\[\mathcal{D}(u,1)=-\mu\Lambda^\alpha u:=\mu\,c_{\alpha,N}\int_{\R^N}\frac{u(x)-u(y)}{|x-y|^{N+\alpha}}\dd y,\]
resulting in a regularization effect on the solution. The corresponding system
\begin{equation}\label{eq:fractalBurgers}
	\partial_tu+u\cdot\nabla u=-\mu\Lambda^\alpha u
\end{equation}
is recognized as the fractal Burgers equation. This equation has been extensively studied in \cite{kiselev2008blow}, where global well-posedness is established for $\alpha\in[1,2)$, while solutions may exhibit shock formations for $\alpha\in(0,1)$.

The regularization effect of \eqref{eq:alignment} was explored in \cite{do2018global,shvydkoy2017eulerian,shvydkoy2018eulerian} for a one-dimensional periodic domain $\mathbb{T}$, under the assumption of no pressure (i.e., $P\equiv 0$ in \eqref{eq.EAS}). An intriguing discovery is the global well-posedness of the system for any $\alpha\in(0,2)$. Particularly, when $\alpha\in(0,1)$, in contrast to \eqref{eq:fractalBurgers}, the regularization effect of \eqref{eq:alignment} is strong enough to ensure global regularity for all smooth initial data. Extensions are made in \cite{kiselev2018global,miao2021global} considering general singular communication protocols.

The understanding of the theory for the pressure-less Euler-alignment system becomes more intricate in multi-dimensions. Global well-posedness has been established mostly for small initial data perturbed around $\rho\equiv1$.
See the recent result of Shvydkoy \cite{shvydkoy2019global} considering the periodic domain $\mathbb{T}^N$ with $\alpha\in(0,2)$, and Danchin et al. \cite{danchin2019regular} on the whole space $\R^N$ with $\alpha\in(1,2)$. Global regularity for generic initial data is available only for uni-directional flows \cite{lear2021unidirectional,lear2023global,li2023global} with $\alpha\in[1,2)$.

We are interested in examining the Euler-alignment system \eqref{eq.EAS} with pressure \eqref{eq:pressure}. The introduction of pressure disrupts certain conserved quantities pivotal for establishing a global well-posedness theory in the pressure-less system.

In the context of one-dimensional torus $\mathbb{T}$, Constantin et al. \cite{constantin2020entropy} established a global well-posedness theory for sufficiently large $\alpha\in(\frac53,2)$. Their result requires an additional strong local dissipation term of the type $(\mu(\rho)u_x)_x$ to be incorporated into the system.

In higher dimensions, existing global regularity results generally rely on imposing smallness conditions on the initial data. 
Chen et al. \cite{chen2021global} established global well-posedness for smooth initial data under smallness assumptions in the spatial domain $\mathbb{T}^N$ for $\alpha\in(0,2)$.
For the Euler-alignment system \eqref{eq.EAS}-\eqref{eq:pressure} in $\R^N$, global well-posedness has been studied by the authors \cite{bai2024global}, considering small initial data in an appropriate critical Besov space: 
\[\rho-1\in\widetilde{B}^{\frac{N}{2}+1-\alpha,\frac{N}{2}}\triangleq\dot{B}_{2,1}^{\frac{N}{2}+1-\alpha}\cap \dot{B}_{2,1}^{\frac{N}{2}} \quad\text{and}\quad u\in\dot{B}_{2,1}^{\frac{N}{2}+1-\alpha},\]
when $\alpha\in(1,2)$. The scaling invariant hybrid Besov space $\widetilde{B}^{\frac{N}{2}+1-\alpha,\frac{N}{2}}$ (with $\alpha=2$) was first introduced by Danchin \cite{danchin2000global} on the barotropic compressible Navier-Stokes system, wherein the alignment term is replaced by  
\begin{equation}\label{eq:baroNS}
\mathcal{D}_{\mathrm{loc}}^{2}(u) = \mu_1 \Delta u +\mu_2 \nabla\Div u,\quad\text{with}\quad \mu_1>0,\,\mu_1+\mu_2>0.
\end{equation}

As a companion to \cite{bai2024global}, our first result focuses on the global well-posedness of the Euler-alignment system \eqref{eq.EAS} in $\R^N$, considering the parameter range $\alpha\in(0,1]$.

\begin{theorem}[Global well-posedness]\label{thm:global}
Let $s> \frac{N}2+1$.
Consider the Euler-alignment system \eqref{eq.EAS} with pressure \eqref{eq:pressure}, alignment interaction \eqref{eq:alignment} with $0<\alpha\le 1$, and initial data $(\rho_0-1,u_0)\in H^s(\R^N)$.
There exists a small constant $\varepsilon>0$ such that if
\begin{equation}\label{eq:cond0}
  \Vert (\rho_0-1,u_0)\Vert_{H^s}<\varepsilon,
\end{equation}
then the Euler-alignment system \eqref{eq.EAS} has a global unique strong solution $(\rho,u)$ such that
\begin{equation}\label{eq:rho-u-bdd0}
\begin{split}
  & \rho>0\;\; \textrm{in}\;\;\R_+\times \R^N,\quad (\rho-1,u) \in C_b(\R_+; H^s(\R^N)), \\
  & \Lambda^{1-\frac{\alpha}{2}}\rho\in L^2(\R_+; H^{s+\alpha-1}(\R^N)), \quad
  \Lambda^{\frac{\alpha}{2}}u\in L^2(\R_+;H^s(\R^N)).
\end{split}
\end{equation}
\end{theorem}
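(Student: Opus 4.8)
The plan is to establish Theorem~\ref{thm:global} via a standard continuation argument built on an a priori energy estimate in $H^s$. First I would reformulate the system in terms of the perturbation variables: writing $a=\rho-1$, the continuity equation becomes $\partial_t a+u\cdot\nabla a+(1+a)\Div u=0$, and the momentum equation, after dividing by $\rho=1+a$, takes the form
\begin{equation}\label{eq:prop-momentum}
  \partial_t u+u\cdot\nabla u+P'(1+a)\frac{\nabla a}{1+a}=-\mu\big(\Lambda^\alpha(\rho u)-u\Lambda^\alpha\rho\big).
\end{equation}
The right-hand side must be split so that its dissipative core is visible: write $-\mu(\Lambda^\alpha(\rho u)-u\Lambda^\alpha\rho)=-\mu\Lambda^\alpha u-\mu\big((\Lambda^\alpha(au)-u\Lambda^\alpha a)\big)$, where the second piece is a commutator that is genuinely lower order because the singularity is cancelled. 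The pressure term is handled by the usual effective-density trick: since $P'(1+a)/(1+a)=1+O(a)$ for small $a$ (here $P(\rho)=\rho^\gamma$ so $P'(\rho)/\rho=\gamma\rho^{\gamma-2}$), one gets the linearized acoustic coupling $\partial_t u+\gamma\nabla a$ plus quadratic remainders, and symmetrically $\partial_t a+\Div u$ plus quadratic remainders.

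Next I would run the $H^s$ energy estimate. Applying $\Lambda^s$ (or $\partial^\beta$ with $|\beta|\le s$) to both equations, pairing with $\Lambda^s a$ and $\Lambda^s u$ in $L^2$, and summing, the linear acoustic terms cancel after multiplying the density equation by $\gamma$ (use $\gamma\langle\Lambda^s\nabla a,\Lambda^s u\rangle+\gamma\langle\Lambda^s\Div u,\Lambda^s a\rangle=0$). The dissipation $-\mu\langle\Lambda^\alpha\Lambda^s u,\Lambda^s u\rangle=-\mu\|\Lambda^{\frac\alpha2+s}u\|_{L^2}^2$ supplies control of $\Lambda^{\alpha/2}u$ in $H^s$. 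The transport terms $u\cdot\nabla a$, $u\cdot\nabla u$ are controlled by commutator (Kato--Ponce) estimates, costing $\|\nabla u\|_{L^\infty}\lesssim\|u\|_{H^s}$ since $s>\frac N2+1$. The commutator alignment term $\Lambda^\alpha(au)-u\Lambda^\alpha a$ is estimated by a fractional Leibniz/commutator bound of the form $\|\Lambda^s(\Lambda^\alpha(au)-u\Lambda^\alpha a)\|_{L^2}\lesssim\|\nabla u\|_{L^\infty}\|a\|_{H^{s+\alpha-1}}+\|u\|_{H^{s+\alpha}}\|a\|_{W^{?,\infty}}+\dots$; the key point is that because $\alpha\le1$ the total derivative count $s+\alpha$ on $u$ is absorbed by $\varepsilon\|\Lambda^{\frac\alpha2+s}u\|_{L^2}^2$ plus lower-order density norms — this is where the smallness is essential and where one recovers the $\Lambda^{1-\alpha/2}\rho\in L^2H^{s+\alpha-1}$ bound. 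To close that last bound one also uses the continuity equation to trade a time derivative of $a$ against $\Div u$: testing $\partial_t\nabla a$-type quantities against $\nabla u$ produces, after integration by parts and use of the dissipation, the missing parabolic gain on $a$ at the level $\Lambda^{1-\alpha/2}$; this is the standard ``cross-term'' mechanism (à la Danchin) adapted to fractional dissipation.

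Collecting everything, one obtains a differential inequality of the shape
\begin{equation}\label{eq:prop-energy}
  \frac{\dd}{\dd t}E(t)+c\,\mathcal{D}(t)\le C\big(E(t)^{1/2}+E(t)\big)\,\mathcal{D}(t),
\end{equation}
where $E(t)\simeq\|(a,u)(t)\|_{H^s}^2$ and $\mathcal{D}(t)\simeq\|\Lambda^{\alpha/2}u\|_{H^s}^2+\|\Lambda^{1-\alpha/2}a\|_{H^{s+\alpha-1}}^2$; if $E(0)<\varepsilon^2$ is small enough that $C(E^{1/2}+E)\le c/2$ for all $E\le 2\varepsilon^2$, a continuity argument shows $E(t)\le 2E(0)$ for all time, yielding the global bounds \eqref{eq:rho-u-bdd0}. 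Positivity of $\rho$ follows since $\|a\|_{L^\infty}\lesssim\|a\|_{H^s}<\frac12$ along the solution. Local existence and uniqueness in $H^s$ are standard (mollification/energy method, or a fixed point using the fractional parabolic smoothing of $\Lambda^\alpha$), so the global statement follows by combining local existence with the uniform a priori bound. The main obstacle is the treatment of the fractional commutator $\Lambda^\alpha(au)-u\Lambda^\alpha a$: one must prove a commutator estimate sharp enough that the highest-derivative contribution falls on $u$ (so it is absorbed by the dissipation) and the density only ever appears with at most $s+\alpha-1$ derivatives (so it is controlled by the parabolic gain on $a$); getting the right balance of $L^\infty$ versus $L^2$ norms here, uniformly for $\alpha\in(0,1]$ and in particular as $\alpha\to0^+$ where the dissipation weakens, is the delicate part of the argument.
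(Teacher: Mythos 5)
Your overall architecture (perturbation form \eqref{eq.EAS.a} with the alignment term split as $-\mu\Lambda^\alpha u-\mu(\Lambda^\alpha(au)-u\Lambda^\alpha a)$, an $H^s$ energy estimate with Danchin-type cross functionals to generate the fractional gain $\Lambda^{1-\alpha/2}a\in L^2_tH^{s+\alpha-1}$, closure via $X(T)\le CX_0+CX^{3/2}(T)$ and a continuity argument, then local well-posedness plus a continuation criterion) parallels the paper. However, there is a genuine gap in your treatment of the pressure nonlinearity. You propose to treat $\gamma(\rho^{\gamma-2}-1)\nabla a$ as a ``quadratic remainder'' controlled by product/commutator estimates, but this term loses a derivative that the available dissipation cannot recover when $\alpha<1$. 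Concretely, in the $\dot H^s$ estimate its top-order contribution is of the form $\int_{\R^N} a\,\Lambda^s\Div u\,\Lambda^s a\,\dd x$ (up to harmless factors): the total derivative count is $2s+1$, while the dissipation only provides $\Lambda^{s+\frac{\alpha}{2}}u$ and $\Lambda^{s+\frac{\alpha}{2}}a$ in $L^2_tL^2_x$, i.e.\ $2s+\alpha$ derivatives in total. Distributing the extra $1-\frac{\alpha}{2}$ derivatives in any way leaves either $\Lambda^{s+1-\frac{\alpha}{2}}a$ or $\Lambda^{s+1-\frac{\alpha}{2}}u$, neither of which is controlled for $\alpha<1$, and smallness of the coefficient does not compensate a derivative deficit. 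Nor does this term cancel against $-a\Div u$ from the density equation in the raw variables $(a,u)$: the coefficient of $\nabla a$ in the velocity equation, $\gamma((1+a)^{\gamma-2}-1)$, does not match the coefficient of $\Div u$ in the density equation for general $\gamma\ge1$ (it matches only in degenerate cases such as $\gamma=2$), so the quasilinear structure is not symmetric as written.

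The paper's proof is organized precisely around this obstruction: it introduces the symmetrizing variable $\sigma$ of \eqref{sigma} (following Sideris--Thomases--Wang), so that the system becomes \eqref{eq.EAS.sigma}, in which the nonlinear pressure coupling appears with the \emph{same} coefficient $\frac{\gamma-1}{2}\sigma$ in front of $\Div u$ and $\nabla\sigma$. In the $\dot H^s$ estimate these two contributions are combined and, after an integration by parts and commutator rearrangement (the term $\widetilde I_1$ in the proof of Proposition \ref{Prop:priori}), the $2s+1$-derivative parts cancel, leaving only terms bounded by $\|\nabla\sigma\|_{L^\infty}$, $\|\Div u\|_{L^\infty}$ and $\|\Lambda^s(\sigma,u)\|_{L^2}$, which the dissipation absorbs. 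To repair your argument you would need either this change of variables or an equivalent device (e.g.\ a weighted/symmetrized energy exploiting the gradient structure of the pressure term) — simply estimating the remainder by fractional Leibniz bounds, as proposed, fails exactly in the range $0<\alpha<1$ that the theorem covers. The remaining ingredients of your outline (the two cross functionals at different regularity levels, the commutator estimates for $\Lambda^\alpha(au)-u\Lambda^\alpha a$, and the continuation via the blow-up criterion) are consistent with the paper once this structural point is fixed.
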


\begin{remark}
A notable distinction in the case where $\alpha<1$ is the absence of the scaling invariant critical space that was utilized in \cite{bai2024global}. It is unclear how to formulate a scaling critical Banach space within our setup. Consequently, we opt to work with general Sobolev spaces that are scaling subcritical. Our result extends the work in \cite{chen2021global} from the periodic domain $\mathbb{T}^N$ to the whole space $\R^N$.
\end{remark}

Our next result focuses on the asymptotic behavior of the solution to the Euler-alignment system \eqref{eq.EAS}. For initial data satisfying \eqref{eq:cond0}, it is known that the solution $(\rho,u)$ converges to the steady state $(1,0)$. In particular, the limit
\[u(t,x)\to\bar{u}\]
illustrates the phenomenon of \emph{velocity alignment}, a collective behavior that has been extensively studied recently \cite{tadmor2014critical,shvydkoy2017eulerian2,chen2021global,leslie2023sticky}. Given the translation invariance inherent in the system, we proceed under the assumption $\bar{u}=0$ without loss of generality.

Our goal is to quantify the convergence rate of the solution towards equilibrium. In the context of barotropic compressible Navier-Stokes system \eqref{eq:baroNS}, the asymptotic behavior has been well-studied in the literature, e.g. \cite{serrin1959uniqueness,nash1962probleme, solonnikov1977estimates,valli1982existence,matsumura1979initial,matsumura1980initial,danchin2000global}. Notably, in $\R^3$, the optimal decay rate has been demonstrated (e.g., \cite{matsumura1979initial}):
\begin{equation}\label{eq:L2-decay-CNS}
  \|(\rho -1, u)(t)\|_{L^2} \sim \langle t\rangle^{-\frac{3}{4}},
\end{equation}
where the notation $\langle t\rangle=1+t$ is used.

For the Euler-alignment system \eqref{eq.EAS}, a similar optimal decay rate was obtained in \cite{bai2024global} for $\alpha\in(1,2)$:
\begin{equation}\label{eq:L2-decay-alphage1}
  \|(\rho -1, u)(t)\|_{L^2} \sim \langle t\rangle^{-\frac{N}{2\alpha}}.
\end{equation}
The decay rate agrees with \eqref{eq:L2-decay-CNS} when we take $\alpha\to2$ and $N=3$. Moreover, the decay rate on $\|u\|_{L^2}$ matches with the rate observed for the fractal Burgers equation \eqref{eq:fractalBurgers}, as well as the fractional heat equation 
\[\partial_tu=-\mu\Lambda^\alpha u.\]
This indicates that the alignment operator $\mathcal{D}(u,\rho)$ has a similar regularization effect as $\mathcal{D}(u,1)$.

However, this similarity does \emph{not} hold when $\alpha<1$. To illustrate this difference, let's consider a special case when $\alpha\to0$. In this scenario, the alignment operator formally transforms into a local damping term:
\[\mathcal{D}(u,\rho)=-\mu\rho\big(\Lambda^\alpha u + \Lambda^\alpha((\rho-1)u)-u\Lambda^\alpha(\rho-1)\big)\to-\mu\rho u,\quad\text{as}\,\,\alpha\to0.\]
System \eqref{eq.EAS} then resembles the compressible Euler system with damping. Extensive studies in the literature \cite{wang2001pointwise, tan2013global, sideris2003long, tan2012large, chen2014time} have investigated the global well-posedness and asymptotic behavior of this system. Particularly in $\R^3$, optimal decay rates have been observed (e.g., \cite{tan2012large}):
\begin{equation}\label{eq:L2-decay-ED}
\|\rho(t) -1\|_{L^2} \sim \langle t\rangle^{-\frac{3}{4}},\quad \|u(t)\|_{L^2} \sim \langle t\rangle^{-\frac{5}{4}}.
\end{equation}
The rate notably differs from \eqref{eq:L2-decay-alphage1}. Indeed, when $\alpha\to0$, the fractional heat equation becomes $\partial_tu=-\mu u$, leading to an exponential decay over time. This clearly demonstrates that the behavior of the alignment operators $\mathcal{D}(u,\rho)$ does not mirror that of the fractional Laplacian $\mathcal{D}(u,1)$,  when $\alpha$ is close to zero.

We present the following result concerning the asymptotic behavior of the solution to the Euler-alignment system \eqref{eq.EAS}, with optimal decay rates, for $\alpha\in(0,1]$.

\begin{theorem}[Asymptotic behavior]\label{thm:decay}
Let $N\ge 2$, $0<\alpha\le 1$ and $s>\frac{N}2+1$.
Suppose that $(\rho_0-1,u_0) \in H^s \cap L^1(\R^N)$ and $(\rho,u)$ is a global solution of the Euler-alignment system \eqref{eq.EAS} such that
\begin{align*}
  \sup_{t\ge0}\big(\Vert \rho(t)-1\Vert_{H^s}+\Vert u(t)\Vert_{H^s}\big)< +\infty.
\end{align*}
Then we have the following estimates for the solution $(\rho,u)$:
\begin{enumerate}
\item Decay estimates in $L^2$ norm:
\begin{equation}\label{eq:decay}
 \Vert \rho(t)-1\Vert_{L^2}\lesssim \langle t\rangle^{-\frac{N}{2(2-\alpha)}},\quad
 \Vert u(t)\Vert_{L^2}\lesssim \langle t\rangle^{-\frac{N+2(1-\alpha)}{2(2-\alpha)}}.
\end{equation}
\item Decay estimates in other norms:
\begin{equation}\label{eq:decay2}
 \Vert (\rho-1,u)(t)\Vert_{L^\infty}\lesssim\langle t\rangle^{-\frac{N}{2-\alpha}},\quad
 \Vert \nabla \rho(t)\Vert_{L^2}\lesssim\langle t\rangle^{-\frac{N+2}{2(2-\alpha)}},\quad
 \Vert \Lambda^\alpha u(t)\Vert_{L^2}\lesssim\langle t\rangle^{-\frac{N+2}{2(2-\alpha)}}.
\end{equation}
\item Decay estimates for the incompressible part $\mathbb{P}u=u - \nabla \Delta^{-1} \Div u$:
\begin{equation}\label{eq:decay.incomp}
  \Vert \mathbb{P}u(t)\Vert_{L^2}
  \lesssim \langle t\rangle^{-\frac{N}{2\alpha}},
\end{equation}
for $\alpha\in[\frac{2N}{3N+2},1]$.
\item Lower bounds: for $\alpha\in(0,1)$, assuming $\int_{\R^N} \big(\rho_0-1\big)\dd x\neq 0$ and $\int_{\R^N} \rho_0 u_0\dd x\neq 0$, we have
\begin{equation}\label{eq:decay.lower}
  \Vert \rho(t)-1\Vert_{L^2}\gtrsim\langle t\rangle^{-\frac{N}{2(2-\alpha)}},\quad
  \Vert u(t)\Vert_{L^2}\gtrsim\langle t\rangle^{-\frac{N+2(1-\alpha)}{2(2-\alpha)}}.
\end{equation}
\end{enumerate}
\end{theorem}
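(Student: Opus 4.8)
The plan is to follow the classical hyperbolic--parabolic scheme: reformulate around the equilibrium $(1,0)$, obtain sharp decay for the linearized semigroup by Fourier analysis, close a time-weighted nonlinear bootstrap on the Duhamel formula, and extract the lower bounds from the leading linear term together with two conservation laws. First I would set $a=\rho-1$ and rewrite \eqref{eq.EAS}--\eqref{eq:alignment} as $\partial_t a+\Div(\rho u)=0$ and $\partial_t u+\mu\Lambda^\alpha u+\gamma\nabla a=\mathcal N(a,u)$, where $\mathcal N$ gathers the quadratic-and-higher terms $-u\cdot\nabla u$, $-\big(\gamma(1+a)^{\gamma-2}-\gamma\big)\nabla a$ and $-\mu\big(\Lambda^\alpha(au)-u\Lambda^\alpha a\big)$; it is convenient to keep the momentum $m=\rho u$ as the second unknown, since then the continuity equation $\partial_t a+\Div m=0$ is exact and the linear part of the system is the operator $\mathcal L=\left(\begin{smallmatrix}0&\Div\\\gamma\nabla&\mu\Lambda^\alpha\end{smallmatrix}\right)$. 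The skeleton is then: (i) sharp linear decay for $e^{-t\mathcal L}$; (ii) a bootstrap on the Duhamel formula $U(t)=e^{-t\mathcal L}U_0+\int_0^te^{-(t-s)\mathcal L}\mathcal N(U)(s)\,\dd s$, using the $L^1$ hypothesis on the data at low frequencies and the uniform $H^s$ bound (small, e.g.\ for solutions furnished by Theorem~\ref{thm:global}) at high frequencies; (iii) the lower bounds.

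For the linear estimates I would split $u=\mathbb P u\oplus(u-\mathbb P u)$. The divergence-free part decouples, $\widehat{\mathbb P u}(t,\xi)=e^{-\mu t|\xi|^\alpha}\widehat{\mathbb P u_0}(\xi)$, yielding the fractional-heat rate $\langle t\rangle^{-N/(2\alpha)}$ from $L^1\cap L^2$ data. The compressible block $(\hat a,\,\widehat{|\nabla|^{-1}\Div m})$ solves a $2\times2$ system whose symbol has eigenvalues $\lambda_\pm(\xi)=\tfrac12\big(\mu|\xi|^\alpha\pm\sqrt{\mu^2|\xi|^{2\alpha}-4\gamma|\xi|^2}\big)$; the key observation is that for $\alpha<1$ and $|\xi|$ small the discriminant is positive, so $\lambda_-(\xi)\sim\tfrac\gamma\mu|\xi|^{2-\alpha}$ and $\lambda_+(\xi)\sim\mu|\xi|^\alpha$, and the \emph{slow} mode $e^{-t\lambda_-}$ --- an \emph{effective} dissipation of fractional order $2-\alpha$ rather than $\alpha$ --- governs the long-time behavior, which is exactly what produces the anomalous rates in \eqref{eq:decay}. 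Expanding the $2\times2$ matrix exponential at low frequencies, $\hat a$ behaves like $e^{-c|\xi|^{2-\alpha}t}$ times the data while the compressible velocity carries an extra $|\xi|^{1-\alpha}$ (since $\partial_t\hat a=-i\xi\cdot\hat m$ trades one power of $|\xi|$ against the rate $|\xi|^{2-\alpha}$); at high frequencies the dissipation plus the coupling give exponential-in-$t$ control once combined with the $H^s$ energy. With the scalings $|\xi|\sim t^{-1/(2-\alpha)}$ (compressible) and $|\xi|\sim t^{-1/\alpha}$ (incompressible) this reproduces the linear versions of \eqref{eq:decay}--\eqref{eq:decay.incomp}, the $L^\infty$ bound following from $\|f\|_{L^\infty}\lesssim\|\hat f\|_{L^1}$ and $\int_{|\xi|\lesssim1}e^{-c|\xi|^{2-\alpha}t}\,\dd\xi\sim t^{-N/(2-\alpha)}$.

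For the upper bounds I would run a bootstrap on a time-weighted functional $\mathcal M(t)$ collecting $\langle s\rangle^{\frac{N}{2(2-\alpha)}}\|a\|_{L^2}$, $\langle s\rangle^{\frac{N+2(1-\alpha)}{2(2-\alpha)}}\|u\|_{L^2}$ and $\langle s\rangle^{\frac{N+2}{2(2-\alpha)}}\big(\|\nabla a\|_{L^2}+\|\Lambda^\alpha u\|_{L^2}\big)$ over $0\le s\le t$, bounding the low-frequency part of $\mathcal N$ in $L^1$ by products of the controlled norms (e.g.\ $\|au\|_{L^1}\le\|a\|_{L^2}\|u\|_{L^2}$, $\|u\cdot\nabla u\|_{L^1}\le\|u\|_{L^2}\|\nabla u\|_{L^2}$, $\|\Lambda^\alpha(au)\|_{L^1}\lesssim\|\Lambda^\alpha a\|_{L^2}\|u\|_{L^2}+\|a\|_{L^2}\|\Lambda^\alpha u\|_{L^2}$, $\|u\Lambda^\alpha a\|_{L^1}\le\|u\|_{L^2}\|\Lambda^\alpha a\|_{L^2}$, with $\|\Lambda^\alpha a\|_{L^2}$ from interpolating $\|a\|_{L^2}$ with $\|\nabla a\|_{L^2}$ and $\|\nabla u\|_{L^2}$ from interpolating $\|u\|_{L^2}$ with $\|u\|_{\dot H^s}$), and absorbing the high-frequency part with the small uniform $H^s$ bound and the exponential decay of the propagator there; checking that each time-convolution $\int_0^t(\text{kernel})\cdot(\text{quadratic decay})\,\dd s$ reproduces the target weight gives $\mathcal M(t)\le C\|U_0\|_{H^s\cap L^1}+C\varepsilon\,\mathcal M(t)+C\,\mathcal M(t)^2$, closed by continuity, hence \eqref{eq:decay}--\eqref{eq:decay2}. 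For the incompressible part \eqref{eq:decay.incomp}, applying $\mathbb P$ annihilates the pressure term ($\gamma(1+a)^{\gamma-2}\nabla a$ is a pure gradient), leaving $\partial_t\mathbb P u+\mu\Lambda^\alpha\mathbb P u=\mathbb P\mathcal N$; the linear rate is $\langle t\rangle^{-N/(2\alpha)}$, divergence-form nonlinearities are harmless, but the worst alignment contributions have $L^1$-size $\lesssim\langle s\rangle^{-(N+1)/(2-\alpha)}$ (a product of the $L^2$-rate of $u$ and of $\Lambda^\alpha a$), and $\int_0^t\langle t-s\rangle^{-N/(2\alpha)}\langle s\rangle^{-(N+1)/(2-\alpha)}\,\dd s\lesssim\langle t\rangle^{-N/(2\alpha)}$ holds precisely when $\tfrac{N+1}{2-\alpha}\ge\tfrac{N}{2\alpha}$, i.e.\ $\alpha\ge\tfrac{2N}{3N+2}$, the stated restriction.

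Finally, for the lower bounds I would use that the mass $m_1:=\int_{\R^N}(\rho_0-1)\,\dd x$ is conserved by the continuity equation and the momentum $m_0:=\int_{\R^N}\rho_0u_0\,\dd x$ is conserved because $\int_{\R^N}\mathcal D(u,\rho)\,\dd x=0$ by the antisymmetry of the integrand in \eqref{eq:Du-rho}; thus $\hat a(t,0)=m_1\neq0$ and $\widehat{\rho u}(t,0)=m_0\neq0$ for all $t$, and in the Duhamel formula the leading low-frequency term of $\hat a$ is a nonzero multiple of $m_1\,e^{-\lambda_-(\xi)t}$ while that of the compressible part of $\widehat{\rho u}$ is $\sim|\xi|^{1-\alpha}$ times a nonzero combination of $m_1,m_0$ times $e^{-\lambda_-(\xi)t}$; since $e^{-\lambda_-t}\gtrsim1$ on $|\xi|\lesssim t^{-1/(2-\alpha)}$, integrating over that ball yields \eqref{eq:decay.lower}, provided the fast mode $e^{-\lambda_+t}$ and the full nonlinear Duhamel term decay strictly faster --- which the previous step supplies. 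The hard part I anticipate is this nonlinear bookkeeping: with no scaling-critical space available when $\alpha<1$ one cannot reuse the critical-Besov machinery of \cite{bai2024global} and must juggle the several distinct subcritical rates against Sobolev embeddings, $L^2$--$\dot H^s$ interpolation and the anomalous $|\xi|^{2-\alpha}$ low-frequency behavior; the non-divergence-form alignment nonlinearities (morally $(\rho-1)\Lambda^\alpha u$ and $u\Lambda^\alpha(\rho-1)$) are the delicate terms and are exactly what dictates the range of $\alpha$ in part (3), while part (4) additionally requires showing these contributions to be strictly subleading.
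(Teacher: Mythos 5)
Your plan follows essentially the same route as the paper's proof (Propositions \ref{Prop:upper.rough}, \ref{Prop:upper.refined}, \ref{Prop:lower}): the $2\times 2$ Green's matrix for the compressible block with the slow mode $e^{-ct|\xi|^{2-\alpha}}$ and the extra factor $|\xi|^{1-\alpha}$ on the velocity component, the decoupled fractional heat semigroup for $\mathbb{P}u$, a time-weighted Duhamel bootstrap combining $L^1$ data at low frequencies with the bounded $H^s$ norm at high frequencies, the identical inequality $\tfrac{N+1}{2-\alpha}\ge\tfrac{N}{2\alpha}$ as the source of the restriction $\alpha\ge\tfrac{2N}{3N+2}$ in part (3), and the same lower-bound mechanism (conservation of mass and momentum, a pointwise lower bound for the kernel on $|\xi|\lesssim t^{-1/(2-\alpha)}$, nonlinear Duhamel terms strictly subleading). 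The choice of $(a,\rho u)$ versus the paper's $(a,v)$ with $v=\Lambda^{-1}\Div u$, and a one-pass versus two-pass (rough then refined) bootstrap, are organizational rather than substantive differences.

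Two points where the sketch falls short of the statement as given. First, your closure $\mathcal{M}(t)\le C\|U_0\|+C\varepsilon\,\mathcal{M}(t)+C\,\mathcal{M}(t)^2$ requires smallness of the data and of the energy, which Theorem \ref{thm:decay} does not assume; the paper avoids this by always placing one factor of each quadratic nonlinearity into the uniformly bounded quantity $E(t)=\sup_\tau\|(a,u)(\tau)\|_{H^s}$, so the weighted inequalities are \emph{linear} in the unknown weighted norms and close by a Gronwall-type argument together with Lemma \ref{lem:convolution}, with no smallness needed. Second, your functional $\mathcal{M}$ contains only $L^2$-based norms, while the sharp $L^\infty$ rate $\langle t\rangle^{-N/(2-\alpha)}$ in \eqref{eq:decay2} cannot be reached by interpolating $L^2$ with $\dot H^s$; you invoke the kernel bound $\|f\|_{L^\infty}\le\|\widehat f\|_{L^1}$ only at the linear level. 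The paper runs an additional weighted bootstrap on $\|(\widehat a,\widehat u)(t)\|_{L^1}$ (the quantity $V(t)$ in Proposition \ref{Prop:upper.rough}), using Young's inequality in Fourier variables and the composition estimate of Lemma \ref{Lem:composite} for the pressure term; this component is needed not only for \eqref{eq:decay2} but also to control $\|a\|_{L^\infty}$, $\|u\|_{L^\infty}$ at the rate $\langle t\rangle^{-N/(2-\alpha)}$ inside the $L^2$ bounds of the sources $u\Lambda^\alpha a$, $\Lambda^\alpha(au)$ when closing the refined rates for $u$, $\Lambda^\alpha u$ and $\mathbb{P}u$ down to the endpoint $\alpha=\tfrac{2N}{3N+2}$. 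Both fixes stay entirely within your framework; with them, your bookkeeping matches the paper's proof.
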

 
\begin{remark}
The decay rates the we obtained in \eqref{eq:decay} are \emph{optimal}. Indeed, the lower bounds offered in \eqref{eq:decay.lower} has the same rate. We have 
\[
  \|\rho(t) -1\|_{L^2} \sim \langle t\rangle^{-r_1(\alpha)}\quad\text{and}\quad
  \|u(t)\|_{L^2} \sim \langle t\rangle^{-r_2(\alpha)},
\]
with the rates
\[
 r_1(\alpha)=\tfrac{N}{2(2-\alpha)}\quad\text{and}\quad
 r_2(\alpha)=\tfrac{N+2(1-\alpha)}{2(2-\alpha)},\quad\text{for}~\alpha\in(0,1].
\]
The result is in companion with the optimal decay rates \eqref{eq:L2-decay-ED} obtained in \cite[Theorem 1.2]{bai2024global}, where
\[
 r_1(\alpha)=r_2(\alpha)=\tfrac{N}{2\alpha},\quad\text{for}~\alpha\in(1,2).
\]
Figure \ref{pic:decay} illustrates the optimal decay rates $r_1(\alpha)$ and $r_2(\alpha)$ for the whole range $\alpha\in(0,2)$.
In particular, when $\alpha\to0$, we recover the optimal decay rates for the compressible Euler system with damping \eqref{eq:L2-decay-ED}; and when $\alpha\to1$, the rates in \eqref{eq:decay} align with the decay rate for the fractional heat equation \eqref{eq:L2-decay-alphage1} with $\alpha=1$.
\end{remark}

\begin{figure}[htbp]
\centering
\begin{tikzpicture}
	\draw[->, thick] (0,0) -- (4.8,0) node[right]{$\alpha$};
	\draw[->, thick] (0,0) -- (0,3.5) node[above]{$r(\alpha)$};
	\node at (-.2,-.35) {0};
	\draw (2,.1) -- (2,-.1) node[below]{1};
	\draw (4,.1) -- (4,-.1) node[below]{2};
	\draw (.1,1.5) -- (-.1,1.5) node[left]{$\frac12$};
	\draw (.1,3) -- (-.1,3) node[left]{1};
	\draw[dashed] (0,3) -- (2,3) -- (2,0);
	\draw[dashed] (0,1.5) -- (4,1.5) -- (4,0);
	\draw[domain=2:4,very thick] plot (\x,{6/\x});
	\draw[domain=0:2,very thick,blue] plot (\x,{6/(4-\x)});
	\node[blue] at (1.5,1.9) {$r_1$};
	\draw[very thick,red] (0,3) -- (2,3);
	\node[red] at (1,3.3) {$r_2$};
	\node at (2,-1) {$N=2$};
\end{tikzpicture}
\begin{tikzpicture}
	\draw[->, thick] (0,0) -- (4.8,0) node[right]{$\alpha$};
	\draw[->, thick] (0,0) -- (0,3.5) node[above]{$r(\alpha)$};
	\node at (-.2,-.35) {0};
	\draw (2,.1) -- (2,-.1) node[below]{1};
	\draw (4,.1) -- (4,-.1) node[below]{2};
	\draw (.1,1.5) -- (-.1,1.5) node[left]{$\frac{N}{4}$};
	\draw (.1,2.25) -- (-.1,2.25) node[left]{$\frac{N+2}{4}$};
	\draw (.1,3) -- (-.1,3) node[left]{$\frac{N}{2}$};
	\draw[dashed] (0,3) -- (2,3) -- (2,0);
	\draw[dashed] (0,1.5) -- (4,1.5) -- (4,0);
	\draw[domain=2:4,very thick] plot (\x,{6/\x});
	\draw[domain=0:2,very thick,blue] plot (\x,{6/(4-\x)});
	\node[blue] at (1.5,1.9) {$r_1$};
	\draw[domain=0:2,very thick,red] plot (\x,{1.5*(6-\x)/(4-\x)});
	\node[red] at (.5,2.65) {$r_2$};
	\node at (2,-1) {$N\ge 3$};
\end{tikzpicture}
\caption{The optimal decay rates.}\label{pic:decay}
\end{figure}
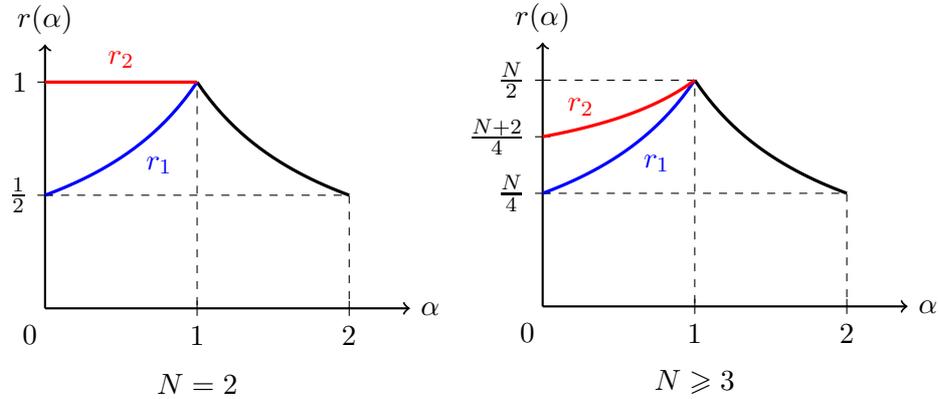

\begin{remark}
The estimate \eqref{eq:decay.incomp} reveals that the incompressible part exhibits the same decay rate as the fractional heat equation \eqref{eq:L2-decay-alphage1}. This suggests that the predominant factor contributing to the varied decay rates in the $\alpha<1$ case is the density stretching, i.e., the compressible component of the dynamics.
\end{remark}



The rest of the paper is organized as follows.
In Section \ref{Preliminary}, we provides several auxiliary lemmas to be used in the subsequent sections.
Section \ref{global_solution} is devoted to the proof of Theorem \ref{thm:global} by mainly establishing the global a priori estimates.
In Section \ref{decay}, we prove Theorem \ref{thm:decay} concerning the optimal decay estimates for the constructed global-in-time solution.

\section{Preliminary}\label{Preliminary}
In this section we compile several useful tools including some product, commutator and composition estimates, and the decay estimates associated with the fractional heat operator, and an inequality of a convolutional integral.

We first recall the following fractional Leibniz rule (see e.g. \cite[Theorem 1.2]{li2019kato}).
\begin{lemma}\label{lem:Leibniz}
Let $s>0$, $1\le p<\infty$ and $1<p_1,p_2,q_1,q_2\le \infty$ with $\frac{1}{p}= \frac{1}{p_1} + \frac{1}{p_2} = \frac{1}{q_1} + \frac{1}{q_2}$.
Then there exists a constant $C=C(s,p,p_1,p_2,q_1,q_2,N)$ such that
\begin{equation}\label{Leibnitz}
  \|\Lambda^s (uv)\|_{L^p}
  \le C \big(\|\Lambda^{s} u\|_{L^{p_1}}\|v\|_{L^{p_2}}+\|\Lambda^s v\|_{L^{q_1}}\|u\|_{L^{q_2}}\big).
\end{equation}
\end{lemma}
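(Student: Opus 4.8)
The plan is to prove \eqref{Leibnitz} by a Bony paraproduct decomposition combined with the Fefferman--Stein vector-valued maximal inequality. Fix an inhomogeneous/homogeneous dyadic Littlewood--Paley family $\{\Delta_j\}_{j\in\mathbb Z}$ with low-frequency cutoffs $S_j=\sum_{k\le j-1}\Delta_k$, set $\widetilde\Delta_j=\Delta_{j-1}+\Delta_j+\Delta_{j+1}$, and write
\[
  uv = T_u v + T_v u + R(u,v),\qquad
  T_u v=\sum_j S_{j-1}u\,\Delta_j v,\qquad
  R(u,v)=\sum_j \Delta_j u\,\widetilde\Delta_j v.
\]
The two paraproducts are interchanged by swapping the roles of $u,v$ and of the exponent pairs $(p_1,p_2)\leftrightarrow(q_1,q_2)$; likewise the remainder is symmetric. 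Hence it suffices to establish
\[
  \|\Lambda^s(T_uv)\|_{L^p}+\|\Lambda^s R(u,v)\|_{L^p}\;\lesssim\;\|u\|_{L^{q_2}}\|\Lambda^s v\|_{L^{q_1}},
\]
and then $T_vu$ supplies the other summand of \eqref{Leibnitz} by exchanging $(u,v)$ and $(q_1,q_2)\leftrightarrow(p_1,p_2)$.

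\textbf{The generic range $1<p,p_1,p_2,q_1,q_2<\infty$.} For the low--high paraproduct, the Fourier support of $S_{j-1}u\,\Delta_j v$ is contained in an annulus $|\xi|\sim 2^j$, so $\Lambda^s(T_uv)$ is again a legitimate Littlewood--Paley sum and the square-function characterization of $\dot W^{s,p}$ gives
\[
  \|\Lambda^s(T_uv)\|_{L^p}\lesssim\Big\|\big(\textstyle\sum_j 2^{2js}|S_{j-1}u\,\Delta_j v|^2\big)^{1/2}\Big\|_{L^p}
  \lesssim\Big\|\mathcal M u\cdot\big(\textstyle\sum_j 2^{2js}|\Delta_j v|^2\big)^{1/2}\Big\|_{L^p},
\]
using the pointwise domination $|S_{j-1}u|\lesssim\mathcal M u$ by the Hardy--Littlewood maximal function. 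Applying H\"older with $\frac1p=\frac1{q_1}+\frac1{q_2}$, the $L^{q_2}$-boundedness of $\mathcal M$ (valid since $q_2>1$), and the square-function characterization of $\dot W^{s,q_1}$ leaves the bound $\|u\|_{L^{q_2}}\|\Lambda^s v\|_{L^{q_1}}$. For the remainder, the product $\Delta_j u\,\widetilde\Delta_j v$ is only supported in a \emph{ball} $|\xi|\lesssim 2^j$; the key point is that for $s>0$ (this is precisely where the hypothesis $s>0$ is used) a sum of such ball-supported blocks still satisfies $\|\Lambda^s\sum_j f_j\|_{L^p}\lesssim\|(\sum_j 2^{2js}|f_j|^2)^{1/2}\|_{L^p}$. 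Placing the weight $2^{js}$ on the $v$-factor and estimating $\sup_j|\Delta_j u|\lesssim\mathcal M u$ as above again yields $\lesssim\|u\|_{L^{q_2}}\|\Lambda^s v\|_{L^{q_1}}$. This completes the proof when every exponent is strictly between $1$ and $\infty$.

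\textbf{The endpoint cases — the main obstacle.} The statement also allows one of $p_1,p_2$ or one of $q_1,q_2$ to equal $\infty$, and allows $p=1$. In these situations the argument above breaks down on the offending factor: the square-function characterization of $\dot W^{s,\infty}$ fails (the $L^\infty$ square function need not be comparable to $\|\Lambda^s(\cdot)\|_{L^\infty}$), and neither the Littlewood--Paley characterization nor the Fefferman--Stein inequality is available at $p=1$. This is the genuine difficulty of the lemma. It is resolved by the refined decomposition of D.~Li \cite{li2019kato}: in the paraproduct whose high-frequency factor is the one measured in $L^\infty$, one does \emph{not} pass to a square function but instead strips off $\Lambda^s$ by writing $2^{js}\widetilde\Delta_j v=\widetilde{\widetilde\Delta}_j(\Lambda^s v)$ for a zeroth-order multiplier localized at frequency $2^j$, so that $\|2^{js}\widetilde\Delta_j v\|_{L^\infty}\lesssim\|\Lambda^s v\|_{L^\infty}$ uniformly in $j$; the remaining $\ell^2_j$-summation in the other factor is absorbed using the almost-orthogonality of the dyadic pieces, and the case $p=1$ is then reached by duality/real interpolation. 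Since \eqref{Leibnitz} in exactly the stated generality is established in \cite[Theorem~1.2]{li2019kato}, we content ourselves with invoking that result.
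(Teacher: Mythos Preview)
Your proposal is correct and aligns with the paper's treatment: the paper does not give its own proof of this lemma but simply cites \cite[Theorem~1.2]{li2019kato}, which is exactly the reference you invoke after your sketch. Your Bony-paraproduct outline for the non-endpoint range is standard and sound, and your identification of the $L^\infty$ and $p=1$ endpoints as the genuine obstacles (resolved in \cite{li2019kato}) is accurate; since the paper itself defers entirely to that reference, there is nothing further to compare.
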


The following estimates can be derived by Bony's decomposition. See \cite[Equation (2.29), Theorems 2.47 and 2.52]{bahouri2011fourier}.
\begin{lemma}\label{lem:prod-es}
  Let $u: \R^N \rightarrow \R^N$ be a vector field of $\R^N$
and $f:\R^N\rightarrow \R$ be a function. Then for every $s\geq 0$, we have
\begin{align*}
  \|u\cdot \nabla u\|_{\dot H^s} \leq C \|u\|_{L^\infty} \|\nabla u\|_{\dot H^s},
\end{align*}
and
\begin{align*}
  \|u\cdot\nabla f\|_{\dot H^s} + \|(\Div u)\,f\|_{\dot H^s}
  \leq C \big( \|u\|_{L^\infty} \| f\|_{\dot H^{s+1}}
  + \|u\|_{\dot H^{s+1}} \|f\|_{L^\infty}\big).
\end{align*}
\end{lemma}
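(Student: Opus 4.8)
The plan is to run Bony's paraproduct decomposition on each bilinear term and then appeal to the continuity of the paraproduct and remainder operators on homogeneous Sobolev spaces, taking care to exploit the transport/divergence structure so that only the \emph{stated} combination of norms survives, rather than the symmetric one produced by the naive fractional Leibniz rule of Lemma~\ref{lem:Leibniz}. I would write $fg=T_fg+T_gf+R(f,g)$ with the paraproduct $T_fg=\sum_k S_{k-1}f\,\Delta_k g$ and the remainder $R(f,g)=\sum_k \Delta_k f\,\widetilde\Delta_k g$, $\widetilde\Delta_k:=\Delta_{k-1}+\Delta_k+\Delta_{k+1}$. The endpoint $s=0$ of both inequalities is immediate from Hölder: $\|u\cdot\nabla u\|_{L^2}\le\|u\|_{L^\infty}\|\nabla u\|_{L^2}$, $\|u\cdot\nabla f\|_{L^2}\le\|u\|_{L^\infty}\|\nabla f\|_{L^2}$, and $\|(\Div u)f\|_{L^2}\le\|f\|_{L^\infty}\|\Div u\|_{L^2}$, so from now on I assume $s>0$, and I would use repeatedly that $\|S_{k-1}g\|_{L^\infty}\lesssim\|g\|_{L^\infty}$, that $\|\nabla S_{k-1}g\|_{L^\infty}\lesssim 2^k\|g\|_{L^\infty}$ by Bernstein, that $S_{k-1}f\,\Delta_k g$ is spectrally supported in an annulus $\sim 2^k$ while $\Delta_k f\,\widetilde\Delta_k g$ is supported only in a ball of radius $\lesssim 2^k$, together with the standard summation lemma (e.g.\ \cite{bahouri2011fourier}): a series $\sum_k G_k$ of blocks spectrally supported in balls of radius $\lesssim 2^k$ satisfies $\|\sum_k G_k\|_{\dot H^s}\lesssim\big(\sum_k 2^{2ks}\|G_k\|_{L^2}^2\big)^{1/2}$ provided $s>0$, while for blocks supported in annuli $\sim 2^k$ this holds for every $s\in\R$.

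For the first estimate I would expand componentwise $u\cdot\nabla u^i=\sum_j\big(T_{u^j}\partial_j u^i+T_{\partial_j u^i}u^j+R(u^j,\partial_j u^i)\big)$. The first sum is controlled directly by the $L^\infty\times\dot H^s\to\dot H^s$ bound for the paraproduct (cf.\ \cite{bahouri2011fourier}): $\|\sum_j T_{u^j}\partial_j u^i\|_{\dot H^s}\lesssim\|u\|_{L^\infty}\|\nabla u\|_{\dot H^s}$. In the second sum I would transfer the derivative onto the low-frequency factor, $\sum_j T_{\partial_j u^i}u^j=\sum_k(\nabla S_{k-1}u^i)\cdot\Delta_k u$, each block having $L^2$ norm $\lesssim 2^k\|u\|_{L^\infty}\|\Delta_k u\|_{L^2}$ and being annulus-localized, so that $\ell^2$-summation gives $\lesssim\|u\|_{L^\infty}\|u\|_{\dot H^{s+1}}=\|u\|_{L^\infty}\|\nabla u\|_{\dot H^s}$. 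The remainder $\sum_j R(u^j,\partial_j u^i)=\sum_k(\Delta_k u\cdot\nabla)\widetilde\Delta_k u^i$ has blocks of $L^2$ norm $\lesssim\|u\|_{L^\infty}\,2^k\|\widetilde\Delta_k u\|_{L^2}$ but is only ball-localized, so applying the $s>0$ summation lemma yields $\lesssim\|u\|_{L^\infty}\|u\|_{\dot H^{s+1}}$. Adding the three bounds completes the first inequality.

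The second estimate is entirely parallel. For $u\cdot\nabla f=\sum_j\big(T_{u^j}\partial_j f+T_{\partial_j f}u^j+R(u^j,\partial_j f)\big)$ the first sum gives $\|u\|_{L^\infty}\|f\|_{\dot H^{s+1}}$ as before; for the second, Bernstein gives $\|S_{k-1}\partial_j f\|_{L^\infty}\lesssim 2^k\|f\|_{L^\infty}$, and with annulus localization $\|\sum_j T_{\partial_j f}u^j\|_{\dot H^s}\lesssim\|f\|_{L^\infty}\|u\|_{\dot H^{s+1}}$; and $\sum_j R(u^j,\partial_j f)=\sum_k(\Delta_k u\cdot\nabla)\widetilde\Delta_k f$ is ball-localized with blocks of $L^2$ norm $\lesssim\|u\|_{L^\infty}2^k\|\widetilde\Delta_k f\|_{L^2}$, hence $\lesssim\|u\|_{L^\infty}\|f\|_{\dot H^{s+1}}$ by the $s>0$ lemma. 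For $(\Div u)f=T_{\Div u}f+T_f(\Div u)+R(\Div u,f)$, Bernstein on $S_{k-1}\Div u$ gives $\|T_{\Div u}f\|_{\dot H^s}\lesssim\|u\|_{L^\infty}\|f\|_{\dot H^{s+1}}$; the $L^\infty\times\dot H^s$ paraproduct bound gives $\|T_f(\Div u)\|_{\dot H^s}\lesssim\|f\|_{L^\infty}\|\Div u\|_{\dot H^s}\lesssim\|f\|_{L^\infty}\|u\|_{\dot H^{s+1}}$; and the remainder, whose regularities add to $s+0>0$, is bounded via the remainder continuity (cf.\ \cite{bahouri2011fourier}) by $\|\Div u\|_{\dot H^s}\|f\|_{L^\infty}\lesssim\|u\|_{\dot H^{s+1}}\|f\|_{L^\infty}$. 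Summing and using $\|\nabla u\|_{\dot H^s}=\|u\|_{\dot H^{s+1}}$ and $\|\nabla f\|_{\dot H^s}=\|f\|_{\dot H^{s+1}}$ gives the claim.

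The only real difficulty here is conceptual rather than computational: the naive product estimate of Lemma~\ref{lem:Leibniz} applied to $u^j\partial_j u$ also produces the term $\|\nabla u\|_{L^\infty}\|u\|_{\dot H^s}$, which is not among the norms we are permitted to use; eliminating it forces one to keep track of \emph{where} the derivative sits in each Littlewood--Paley piece and to move it (by Bernstein when it lies on a low-frequency factor $S_{k-1}$, or by redistributing it across the remainder) onto a block of frequency $\sim 2^k$, so that it is absorbed into the single $\dot H^{s+1}$ factor. This is precisely what leaves the remainder term only ball-localized, which is why the homogeneous estimate must be restricted to $s>0$, with the endpoint $s=0$ recovered separately by Hölder.
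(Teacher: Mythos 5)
Your argument is correct and is exactly the route the paper intends: Lemma \ref{lem:prod-es} is stated there without proof, attributed to Bony's decomposition with the paraproduct and remainder continuity results of \cite[Equation (2.29), Theorems 2.47 and 2.52]{bahouri2011fourier}, which is precisely what you carry out (including the key point of shifting the derivative off the low-frequency factor via Bernstein so that only the stated combination of norms appears, and handling $s=0$ separately by H\"older).
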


The next lemma provides some commutator estimates that can be found in \cite[Theorem 5.1]{li2019kato}.
These estimates will be useful in dealing with the alignment term.
\begin{lemma}[Commutator estimates]\label{lem:commutator}
For $s>1$, there exists a constant $C=C(s, N)$ such that
\begin{align}
  &\|\Lambda^{s}(fg)-f\Lambda^{s}g\|_{L^2}\leq C(\|\Lambda^s f\|_{L^2}\|g\|_{L^\infty}+\|\nabla f\|_{L^\infty}\|\Lambda^{s-1}g\|_{L^2}), \label{comm21}\\
  &\|\Lambda^s(fg)-f\Lambda^sg-g\Lambda^sf\|_{L^2}\leq C(\|\Lambda^{s-1} f\|_{L^2}\|\nabla g\|_{L^\infty} + \|\nabla f\|_{L^\infty} \|\Lambda^{s-1} g\|_{L^2}). \label{comm3}
\end{align}
For $s\in(0,1]$,  there exists a constant $C=C(s, N)$ such that
\begin{align}
  &\label{comm22}\|\Lambda^s(fg)-f\Lambda^sg\|_{L^2}\leq C\|\Lambda^{s}f\|_{L^2}\|g\|_{L^\infty},\\
  &\label{comm23}\|\Lambda^s(fg)-f\Lambda^sg-g\Lambda^sf\|_{L^2}\leq C\|\Lambda^{s/2}f\|_{L^\infty}\|\Lambda^{s/2}g\|_{L^2},\\
  &\label{comm24}\|\Lambda^s(fg)-f\Lambda^sg\|_{L^1}\leq C\|\Lambda^{s}f\|_{L^2}\|g\|_{L^2}.
\end{align}
\end{lemma}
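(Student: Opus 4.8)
The plan is to prove the high‑regularity estimates \eqref{comm21}--\eqref{comm3} by Bony's paraproduct calculus, and the low‑regularity estimates \eqref{comm22}--\eqref{comm24} directly from the singular‑integral kernel of $\Lambda^s$.

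\emph{The range $s\in(0,1]$.} The starting point is the representation $\Lambda^s h(x)=c_{s,N}\,\mathrm{p.v.}\!\int_{\R^N}\frac{h(x)-h(y)}{|x-y|^{N+s}}\,\dd y$ (no principal value is needed for $s<1$; for $s=1$ one works with the Riesz‑transform form $\Lambda=\sum_jR_j\partial_j$). Combining it with the pointwise identities $f(x)g(x)-f(y)g(y)-f(x)(g(x)-g(y))=(f(x)-f(y))g(y)$ and its symmetric counterpart, one obtains the exact formulas
\[
\Lambda^s(fg)-f\Lambda^sg=c_{s,N}\,\mathrm{p.v.}\!\int\frac{(f(x)-f(y))\,g(y)}{|x-y|^{N+s}}\,\dd y,\qquad
\Lambda^s(fg)-f\Lambda^sg-g\Lambda^sf=-c_{s,N}\,\mathrm{p.v.}\!\int\frac{(f(x)-f(y))\,(g(x)-g(y))}{|x-y|^{N+s}}\,\dd y.
\]
For \eqref{comm22} I would pull $\|g\|_{L^\infty}$ out of the first formula and bound $\bigl\|\int\frac{|f(\cdot)-f(y)|}{|\cdot-y|^{N+s}}\dd y\bigr\|_{L^2}\lesssim\|\Lambda^sf\|_{L^2}$ by a dyadic splitting of the $y$‑integral together with maximal‑function estimates (the standard comparison of the first‑order Gagliardo functional with $\Lambda^s$ for $0<s<1$). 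For \eqref{comm24} one takes the $L^1$ norm of the same formula, integrates the kernel in $x$ first via Fubini, and closes by Cauchy--Schwarz and the same comparison. For \eqref{comm23}, Cauchy--Schwarz in $y$ on the second formula gives the pointwise bound $S_{s/2}f(x)\,S_{s/2}g(x)$, with $S_th(x)^2:=\int\frac{|h(x)-h(y)|^2}{|x-y|^{N+2t}}\,\dd y$; the factor in $g$ is handled in $L^2$ through the exact Plancherel identity $\|S_th\|_{L^2}^2=c_{N,t}\|\Lambda^th\|_{L^2}^2$, and the factor in $f$ by a careful dyadic splitting of the $y$‑integral (rather than by $\|S_{s/2}f\|_{L^\infty}$, which is not controlled by $\|\Lambda^{s/2}f\|_{L^\infty}$).

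\emph{The range $s>1$.} Writing $fg=T_fg+T_gf+R(f,g)$ and $f\Lambda^sg=T_f\Lambda^sg+T_{\Lambda^sg}f+R(f,\Lambda^sg)$ yields
\[
\Lambda^s(fg)-f\Lambda^sg=[\Lambda^s,T_f]g+\Lambda^sT_gf+\bigl(\Lambda^sR(f,g)-T_{\Lambda^sg}f-R(f,\Lambda^sg)\bigr).
\]
The paraproduct commutator satisfies $\|[\Lambda^s,T_f]g\|_{L^2}\lesssim\|\nabla f\|_{L^\infty}\|\Lambda^{s-1}g\|_{L^2}$, proved by a first‑order Taylor expansion of the smooth homogeneous kernel of $\Lambda^s$ against the frequency‑localized blocks of $T_fg$; this produces the second term of \eqref{comm21}, while $\|\Lambda^sT_gf\|_{L^2}\lesssim\|g\|_{L^\infty}\|\Lambda^sf\|_{L^2}$ gives the first. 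Estimate \eqref{comm3} follows from the same decomposition run on both slots $T_f$ and $T_g$: the top‑order piece of the form $g\Lambda^sf$ implicit in \eqref{comm21} is cancelled once $g\Lambda^sf$ is subtracted, leaving the symmetric bound $\|\Lambda^{s-1}f\|_{L^2}\|\nabla g\|_{L^\infty}+\|\nabla f\|_{L^\infty}\|\Lambda^{s-1}g\|_{L^2}$.

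\emph{Main obstacle.} The crux throughout is the resonant/remainder part. Treated as operators acting on the individual factors, $\Lambda^sR(f,g)$, $T_{\Lambda^sg}f$ and $R(f,\Lambda^sg)$ land only in $\dot B^0_{2,\infty}\supsetneq L^2$; to close in $L^2$ one must keep them grouped and exploit the extra derivative gained by the trilinear symbol $\Lambda^s(\Delta_jf\,\widetilde\Delta_jg)-\Delta_jf\,\Lambda^s\widetilde\Delta_jg-\Lambda^s\Delta_jf\,\widetilde\Delta_jg$ on each dyadic shell (a Coifman--Meyer‑type cancellation), which restores square‑summability in $j$. The analogous subtlety for $s\le1$ is the $L^\infty$ endpoint in \eqref{comm23}, where a naive symmetrization over‑counts the far field. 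All of this is classical and carried out in detail in \cite[Theorem 5.1]{li2019kato}.
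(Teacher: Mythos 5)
The paper does not actually prove this lemma: it quotes it verbatim from \cite[Theorem 5.1]{li2019kato}, so the only ``proof'' on record is that citation, which you also invoke at the end. Where your sketch goes beyond the citation, the $s\in(0,1]$ part contains a genuine gap. For \eqref{comm22} and \eqref{comm24} your plan is to put absolute values inside the kernel, pull out $\|g\|_{L^\infty}$ (resp.\ use Fubini and Cauchy--Schwarz), and then invoke a ``standard comparison'' $\big\|\int_{\R^N}|f(\cdot)-f(y)|\,|\cdot-y|^{-N-s}\dd y\big\|_{L^2}\lesssim\|\Lambda^sf\|_{L^2}$. No such comparison exists: the standard square-function characterization of $\dot H^s$ concerns the \emph{quadratic} Gagliardo functional, while the first-power functional you need is comparable to $\|f\|_{\dot B^s_{2,1}}$, which is strictly stronger than $\|\Lambda^s f\|_{L^2}$. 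A lacunary example $f=\sum_{k\le K}2^{-ks}e^{i2^kx_1}\varphi(x)$ has $\|\Lambda^sf\|_{L^2}\sim\sqrt K$ while $\int|f(x)-f(y)|\,|x-y|^{-N-s}\dd y\sim K|\varphi(x)|$, so the operator $f\mapsto\int|f(\cdot)-f(y)|\,|\cdot-y|^{-N-s}\dd y$ is unbounded from $\dot H^s$ to $L^2$. Hence taking absolute values inside the commutator kernel discards cancellation that cannot be recovered, and your route would only yield \eqref{comm22}, \eqref{comm24} with $\|f\|_{\dot B^s_{2,1}}$ on the right. The same issue undermines \eqref{comm23}: after Cauchy--Schwarz you are left with $\|S_{s/2}f\|_{L^\infty}$, which, as you yourself note, is a $\dot B^{s/2}_{\infty,2}$-type quantity not controlled by $\|\Lambda^{s/2}f\|_{L^\infty}$; the promised ``careful dyadic splitting'' is never specified, and once the absolute value is inside the kernel there is no obvious way to carry it out. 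The sharp statements require keeping a Littlewood--Paley/paraproduct (Coifman--Meyer) decomposition of \emph{both} factors, which is precisely what the cited proof in \cite{li2019kato} does.

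The $s>1$ part via Bony's decomposition is the standard route and is plausible in outline: the paraproduct commutator bound by $\|\nabla f\|_{L^\infty}\|\Lambda^{s-1}g\|_{L^2}$ and the bound $\|\Lambda^sT_gf\|_{L^2}\lesssim\|g\|_{L^\infty}\|\Lambda^sf\|_{L^2}$ are classical, and you correctly flag that the resonant/remainder block must be kept grouped because $R(f,\Lambda^sg)$ alone only lands in a borderline space. But that grouping and the shell-by-shell cancellation are only described, not executed, so even here the argument ultimately rests on the same external reference as the paper's. In short: citing \cite[Theorem 5.1]{li2019kato} is fine (it is what the paper does), but as an independent proof the $(0,1]$ case as written would fail at the steps indicated.
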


The following lemma state some composition estimates.
Denote $\lceil s\rceil$ to be the smallest integer that is bigger than or equal to $s$.
\begin{lemma}[Composition estimates]\label{Lem:composite}
Let $u\in L^\infty(\R^N)$ and $F\in C^{\infty}(\textnormal{Range}(u))$ such that $F(0) = 0$.
Then the following statements hold.
\begin{itemize}
\item[(1)] If $u\in L^2(\R^N)$, we have
\begin{equation*}
  \Vert {F(u)}\Vert_{L^2}\le \Vert F'\Vert_{L^\infty(\textnormal{Range}(u))} \Vert {u}\Vert_{L^2}.
\end{equation*}
\item[(2)] If $u\in \dot{H}^s(\R^N)$ with $s>0$, there exist a positive constant $C$ depend on $\Vert F\Vert_{C^{\lceil s\rceil}(\textnormal{Range}(u))}$ and $\Vert u\Vert_{L^\infty}$ such that
\begin{equation*}
  \Vert F(u)\Vert_{\dot{H}^s}\le C \Vert u\Vert_{\dot{H}^s}.
\end{equation*}
\item[(3)] If $u\in H^{s_1}(\R^N)$ with $s_1>\frac{N}2$, we have
\begin{equation*}
  \Vert \widehat{F(u)}\Vert_{L^1}\le C \Vert u\Vert_{H^{s_1}}\Vert \widehat{u}\Vert_{L^1},
\end{equation*}
where $C$ depends only on $\Vert F\Vert_{C^{\lceil s_1\rceil+1}(\textnormal{Range}(u))}$ and $\Vert u\Vert_{L^\infty}$.
\end{itemize}
\end{lemma}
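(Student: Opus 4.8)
The plan is to treat the three items separately, exploiting the hypothesis $F(0)=0$ together with the mean value theorem and, for the higher-order statements, a Littlewood--Paley/Bony decomposition. For item (1), since $F(0)=0$ we can write $F(u(x)) = F(u(x)) - F(0) = \left(\int_0^1 F'(\tau u(x))\,\dd\tau\right) u(x)$ pointwise, and hence $|F(u(x))| \le \|F'\|_{L^\infty(\textnormal{Range}(u))}\,|u(x)|$; taking the $L^2$ norm in $x$ gives the claim directly. This requires no more than Minkowski's inequality.

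For item (2), I would invoke the classical nonlinear estimate in homogeneous Sobolev spaces (the Meyer/Bony paralinearization theorem, as in \cite[Theorem 2.61 or Section 2.8]{bahouri2011fourier}): for $F$ smooth on the range of $u$ with $F(0)=0$, one has $\|F(u)\|_{\dot H^s} \le C(\|F\|_{C^{\lceil s\rceil}},\|u\|_{L^\infty})\,\|u\|_{\dot H^s}$ for $s>0$. The argument behind this is the decomposition $F(u) = \sum_j \big(F(S_{j+1}u) - F(S_j u)\big)$; each summand is $m_j\,\Delta_j u$ with $m_j = \int_0^1 F'(S_j u + \tau\Delta_j u)\,\dd\tau$, whose derivatives up to order $\lceil s\rceil$ are controlled by $\|F\|_{C^{\lceil s\rceil}}$ and Bernstein-type bounds on $S_j u$; then one sums in $\ell^2$ using $s>0$. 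I would simply cite this and adapt the constant dependence to the stated form.

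For item (3), the goal is to bound $\|\widehat{F(u)}\|_{L^1}$, i.e. the Wiener-algebra norm $\|F(u)\|_{\mathcal{F}L^1}$, which is an algebra: $\|fg\|_{\mathcal{F}L^1}\le \|f\|_{\mathcal{F}L^1}\|g\|_{\mathcal{F}L^1}$, and $\mathcal{F}L^1 \hookleftarrow H^{s_1}$ for $s_1>\frac N2$ with $\|v\|_{\mathcal{F}L^1}\le C\|v\|_{H^{s_1}}$ (Cauchy--Schwarz: $\int|\widehat v| = \int \langle\xi\rangle^{-s_1}\langle\xi\rangle^{s_1}|\widehat v| \le \||\cdot|\,|^{-s_1}\|_{L^2}\|v\|_{H^{s_1}}$). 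Writing again $F(u) = \big(\int_0^1 F'(\tau u)\,\dd\tau\big)\,u =: G(u)\,u$, the algebra property gives $\|F(u)\|_{\mathcal{F}L^1}\le \|G(u)\|_{\mathcal{F}L^1}\|\widehat u\|_{L^1}$. It then remains to show $\|G(u)\|_{\mathcal{F}L^1}\le C(\|F\|_{C^{\lceil s_1\rceil+1}},\|u\|_{L^\infty})$, which follows because $G$ is smooth on $\textnormal{Range}(u)$ with $G$ not vanishing at $0$ in general; here I would use the standard fact that $v\mapsto G(v)$ maps a ball of $H^{s_1}$ (for $s_1>N/2$, an algebra embedded in $L^\infty$) continuously into $H^{s_1}$ with a bound depending on $\|G\|_{C^{\lceil s_1\rceil}}$ and $\|v\|_{L^\infty}$ — applied to $v=u$, combined with $H^{s_1}\hookrightarrow \mathcal{F}L^1$, this closes the estimate, at the cost of one extra derivative on $F$ (hence $C^{\lceil s_1\rceil+1}$).

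The main obstacle is the bookkeeping of constant dependence in items (2) and (3): one must be careful that the nonlinear Sobolev estimate is applied to a function whose range stays inside a fixed compact set on which $F$ (resp. its derivatives) are controlled, and that the number of derivatives of $F$ used matches $\lceil s\rceil$ (resp. $\lceil s_1\rceil+1$). Once the correct classical references (\cite{bahouri2011fourier,li2019kato}) are pinned down and the factorization $F(u)=\big(\int_0^1 F'(\tau u)\,\dd\tau\big)u$ is used to exploit $F(0)=0$, the remaining steps are routine.
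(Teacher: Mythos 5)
Your proposal follows essentially the same route as the paper's own proof: the factorization $F(u)=u\,G(u)$ with $G(u)=\int_0^1F'(\theta u)\,\dd\theta$ plus H\"older for (1), a citation of the classical composition estimate in $\dot H^s$ for (2) (the paper cites \cite[Lemma 2.1]{lee2022sharp} where you cite \cite{bahouri2011fourier}, the same standard result), and for (3) Young's inequality for the convolution $\widehat{F(u)}=\widehat{u}*\widehat{G(u)}$ together with the embedding $H^{s_1}\hookrightarrow \mathcal{F}L^1$ and the composition bound applied to $G(u)$. The only differences are cosmetic (choice of reference for (2)), and your explicit remark about $G(0)\neq 0$ corresponds to the same implicit step in the paper's one-line deduction of (3) from (1) and (2).
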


\begin{proof}[Proof of Lemma \ref{Lem:composite}]
From $F(0)=0$, we have
\begin{align*}
  F(u)=uG(u),\quad \textrm{with}\quad G(u)=\int_0^1F^\prime(\theta u)\dd \theta.
\end{align*}
Then H\"older's inequality gives the statement (1).
We can exactly obtain (2) from \cite[Lemma 2.1]{lee2022sharp}.
By Young's inequality and the continuous embedding, we see that
\begin{align*}
  \Vert \widehat{F(u)}\Vert_{L^1}
  \le\Vert \widehat{u}\Vert_{L^1}\Vert \widehat{G(u)}\Vert_{L^1}\le C\Vert \widehat{u}\Vert_{L^1}\Vert G(u)\Vert_{H^{s_1}}.
\end{align*}
Thus the statement (3) follows from (1) and (2).
\end{proof}

The next two lemmas provide some explicit decay estimates associated with the fractional heat semigroup operator.
\begin{lemma}\label{lem:heat.decay}
Let $\nu>0$, $\beta>0$, $s\ge 0$ and $f\in \dot{H}^s(\R^N)\cap L^1(\R^N)$.
Let $A(D)$ be a smooth homogeneous multiplier operator of degree $0$ (e.g. $A(D)=\nabla \Delta^{-1}\Div$ or $\Lambda^{-1}\Div$).
Then
\begin{align}\label{eq:heat.decay}
  \Vert e^{-\nu t\Lambda^{\beta}}A(D)f\Vert_{\dot{H}^s(\R^N)}\le C\langle t\rangle^{-\frac{N+2s}{2\beta}}
  \Vert f\Vert_{\dot{H}^s\cap L^1(\R^N)}.
\end{align}
\end{lemma}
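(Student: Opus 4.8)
The plan is to split the estimate at the frequency scale $|\xi|\sim 1$, treating low and high frequencies separately, and to use the $L^1$-information only for the low frequencies and the $\dot H^s$-information only for the high frequencies. Write $\widehat{e^{-\nu t\Lambda^\beta}A(D)f}(\xi)=e^{-\nu t|\xi|^\beta}a(\xi)\widehat f(\xi)$ with $|a(\xi)|\le C$ since $A(D)$ is a bounded multiplier. Then
\begin{equation*}
  \|e^{-\nu t\Lambda^\beta}A(D)f\|_{\dot H^s}^2
  = \int_{\R^N} |\xi|^{2s} e^{-2\nu t|\xi|^\beta} |a(\xi)|^2 |\widehat f(\xi)|^2 \,\dd\xi
  \lesssim \Big(\int_{|\xi|\le 1} + \int_{|\xi|> 1}\Big) |\xi|^{2s} e^{-2\nu t|\xi|^\beta} |\widehat f(\xi)|^2 \,\dd\xi.
\end{equation*}

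For the high-frequency piece, I would simply bound $e^{-2\nu t|\xi|^\beta}\le e^{-2\nu t}\le C\langle t\rangle^{-(N+2s)/\beta}$ on $|\xi|>1$ (exponential decay dominates any polynomial rate), which yields a contribution $\lesssim \langle t\rangle^{-(N+2s)/\beta}\|f\|_{\dot H^s}^2$. For the low-frequency piece, I would use $|\widehat f(\xi)|\le \|\widehat f\|_{L^\infty}\le \|f\|_{L^1}$, so that
\begin{equation*}
  \int_{|\xi|\le 1} |\xi|^{2s} e^{-2\nu t|\xi|^\beta} |\widehat f(\xi)|^2\,\dd\xi
  \le \|f\|_{L^1}^2 \int_{\R^N} |\xi|^{2s} e^{-2\nu t|\xi|^\beta}\,\dd\xi,
\end{equation*}
and then the substitution $\eta = (\nu t)^{1/\beta}\xi$ gives $\int_{\R^N}|\xi|^{2s}e^{-2\nu t|\xi|^\beta}\dd\xi = (\nu t)^{-(N+2s)/\beta}\int_{\R^N}|\eta|^{2s}e^{-2|\eta|^\beta}\dd\eta = C(\nu,s,\beta,N)\, t^{-(N+2s)/\beta}$, the last integral being finite since $2s\ge 0$ and $\beta>0$ force integrability at infinity (and $2s>-N$ trivially at the origin). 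This bounds the low-frequency contribution by $C\, t^{-(N+2s)/\beta}\|f\|_{L^1}^2$. To pass from $t^{-(N+2s)/\beta}$ to $\langle t\rangle^{-(N+2s)/\beta}$ on this term, I note the integral $\int_{|\xi|\le1}|\xi|^{2s}e^{-2\nu t|\xi|^\beta}\dd\xi$ is also bounded by the constant $\int_{|\xi|\le1}|\xi|^{2s}\dd\xi$ uniformly in $t$, so one may combine the $t\le1$ bound (by a constant) with the $t\ge1$ bound (by $Ct^{-(N+2s)/\beta}$, comparable to $\langle t\rangle^{-(N+2s)/\beta}$) to get $\langle t\rangle^{-(N+2s)/\beta}$ throughout. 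Collecting both pieces and taking square roots gives \eqref{eq:heat.decay}.

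There is no serious obstacle here; the only mild care needed is the bookkeeping with $\langle t\rangle$ versus $t$ for small times and the verification that the rescaled integral converges — both are routine. The essential idea is that the $L^1$ bound controls the low-frequency mass (where the polynomial decay rate comes from, via the scaling of the Gaussian-type kernel) while the high frequencies decay exponentially fast regardless. The operator $A(D)$ plays no role beyond its uniform boundedness, and its homogeneity of degree $0$ simply guarantees it does not spoil the $\dot H^s$ norm.
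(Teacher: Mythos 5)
Your proof is correct. The ingredients are the same as in the paper --- Plancherel, the uniform boundedness of $A(D)$, the trivial bound $\Vert\widehat f\Vert_{L^\infty}\le\Vert f\Vert_{L^1}$, and the scaling identity $\int_{\R^N}|\xi|^{2s}e^{-2\nu t|\xi|^{\beta}}\,\dd\xi = C\,t^{-\frac{N+2s}{\beta}}$ --- but you organize them by a frequency split at $|\xi|=1$, using the $L^1$ information on low frequencies for all times and the $\dot H^s$ information (together with the uniform factor $e^{-2\nu t}\le C\langle t\rangle^{-\frac{N+2s}{\beta}}$) on high frequencies, whereas the paper splits in time: for $t\le 1$ it bounds the whole expression simply by $\Vert f\Vert_{\dot H^s}$, and for $t>1$ it applies H\"older over all frequencies at once, placing $|\xi|^s e^{-\nu t|\xi|^{\beta}}$ in $L^2$ and $\widehat f$ in $L^\infty$ to get $Ct^{-\frac{N+2s}{2\beta}}\Vert f\Vert_{L^1}$. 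The two routes buy essentially the same thing: yours makes transparent that the polynomial rate is dictated entirely by the low-frequency mass (high frequencies decay exponentially, irrespective of $s$), at the cost of the extra $t$ versus $\langle t\rangle$ bookkeeping on the low-frequency term, which you handle correctly; the paper's time split is marginally shorter because the small-time case is dispatched in one line. Your verification that the rescaled integral $\int|\eta|^{2s}e^{-2|\eta|^{\beta}}\,\dd\eta$ converges (integrable at the origin since $2s\ge 0>-N$, and at infinity by the exponential) is the right point to check, and the remark that homogeneity of degree zero of $A(D)$ enters only through the $L^\infty$ bound on its symbol matches the paper's use of it.
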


\begin{proof}[Proof of Lemma \ref{lem:heat.decay}]
For $t\le 1$, we have
\begin{equation}\label{eq.Heat.D1}
\begin{aligned}
  \Vert e^{-\nu t\Lambda^{\beta}}A(D)f\Vert_{\dot{H}^s}
  \le C\Vert e^{-\nu t\vert\xi\vert^{\beta}}\vert\xi\vert^s\widehat{f}\Vert_{L^2}
  \le \Vert f\Vert_{\dot{H}^s}
  \le C\langle t\rangle^{-\frac{N+2s}{2\beta}}\Vert f\Vert_{\dot{H}^s}.
\end{aligned}
\end{equation}
For $t>1$, we get
\begin{equation}\label{eq.Heat.D2}
\begin{aligned}
  \Vert e^{-\nu t\Lambda^{\beta}}A(D)f\Vert_{\dot{H}^s}
  &\le C\Vert e^{-\nu t\vert\xi\vert^{\beta}}\vert\xi\vert^s\widehat{f}\Vert_{L^2}
  \le C\Vert\vert\xi\vert^s e^{-\nu t\vert\xi\vert^{\beta}}\Vert_{L^2}\Vert \widehat{f}\Vert_{L^\infty}
  \le Ct^{-\frac{N+2s}{2\beta}}\Vert f\Vert_{L^1}.
\end{aligned}
\end{equation}
Combining \eqref{eq.Heat.D1} and \eqref{eq.Heat.D2} leads to the desired result.
\end{proof}

\begin{lemma}\label{lem:heat.decay.L1}
Let $\nu>0$ and $f,\widehat{f}\in L^1(\R^N)$.
Then
\begin{align}
  \Vert e^{-\nu t\vert\xi\vert^{\beta}}\widehat{f}\Vert_{L^1(\R^N)}\le C\langle t\rangle^{-\frac{N}{\beta}}
  (\Vert f\Vert_{L^1(\R^N)}+\Vert \widehat{f}\Vert_{L^1(\R^N)}).
\end{align}
\end{lemma}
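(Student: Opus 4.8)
\emph{Proof proposal for Lemma \ref{lem:heat.decay.L1}.}
The plan is to separate short times from long times, and for long times to split the frequency integral into a low-frequency and a high-frequency part. First, for $t\le 1$ we have $\langle t\rangle\le 2$ and $0\le e^{-\nu t|\xi|^\beta}\le 1$, hence
\[
  \Vert e^{-\nu t|\xi|^\beta}\widehat f\Vert_{L^1}\le \Vert\widehat f\Vert_{L^1}\le 2^{N/\beta}\,\langle t\rangle^{-N/\beta}\Vert\widehat f\Vert_{L^1},
\]
which already has the claimed form. The remaining work is the regime $t>1$, where $\langle t\rangle$ and $t$ are comparable, so it suffices to produce a bound of the type $t^{-N/\beta}(\Vert f\Vert_{L^1}+\Vert\widehat f\Vert_{L^1})$.

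For $t>1$ I would write $\int_{\R^N}=\int_{\{|\xi|\le 1\}}+\int_{\{|\xi|>1\}}$. On the low-frequency piece, bound $|\widehat f(\xi)|\le\Vert\widehat f\Vert_{L^\infty}\le\Vert f\Vert_{L^1}$ and then estimate the Gaussian-type integral by the scaling $\xi=t^{-1/\beta}\eta$:
\[
  \int_{\R^N}e^{-\nu t|\xi|^\beta}\,\dd\xi=t^{-N/\beta}\int_{\R^N}e^{-\nu|\eta|^\beta}\,\dd\eta=C_{\nu,\beta,N}\,t^{-N/\beta},
\]
the last integral being finite since $\beta>0$; this contributes $\lesssim t^{-N/\beta}\Vert f\Vert_{L^1}$. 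On the high-frequency piece, use $e^{-\nu t|\xi|^\beta}\le e^{-\nu t}$ whenever $|\xi|>1$, so this contribution is at most $e^{-\nu t}\Vert\widehat f\Vert_{L^1}$; since exponential decay dominates any polynomial, $e^{-\nu t}\le C_{\nu,\beta,N}\,t^{-N/\beta}$ for $t\ge 1$, giving $\lesssim t^{-N/\beta}\Vert\widehat f\Vert_{L^1}$. Summing the two pieces and absorbing $t^{-N/\beta}\le 2^{N/\beta}\langle t\rangle^{-N/\beta}$ for $t>1$, then combining with the $t\le1$ case, yields the stated inequality.

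I do not expect a genuine obstacle here: the argument is elementary. The only points needing a little care are the scaling identity for $\int_{\R^N}e^{-\nu t|\xi|^\beta}\,\dd\xi$ and the elementary comparison $e^{-\nu t}\lesssim t^{-N/\beta}$ for large $t$, both standard. If one prefers not to fix the cutoff radius at $|\xi|=1$, an equivalent route is to cut at a $t$-dependent radius, but the fixed cutoff is already enough.
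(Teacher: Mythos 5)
Your proposal is correct and follows essentially the same route as the paper: handle $t\le 1$ trivially, and for $t>1$ use $\Vert\widehat f\Vert_{L^\infty}\le\Vert f\Vert_{L^1}$ together with the scaling identity $\Vert e^{-\nu t\vert\xi\vert^{\beta}}\Vert_{L^1}=C\,t^{-N/\beta}$. The only difference is that your low/high frequency splitting is superfluous: the paper applies this H\"older bound on all of $\R^N$ at once, so for $t>1$ the $\Vert\widehat f\Vert_{L^1}$ contribution is not even needed.
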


\begin{proof}[Proof of Lemma \ref{lem:heat.decay.L1}]
For $t\le 1$, it is obvious to see that $\|e^{-\nu t|\xi|^\beta}\widehat{f}\|_{L^1} \leq C \|\widehat{f}\|_{L^1}$.
For $t>1$, we have
\begin{equation*}
\begin{aligned}
  \Vert e^{-\nu t\vert\xi\vert^{\beta}}\widehat{f}\Vert_{L^1}
  \le C\Vert e^{-\nu t\vert\xi\vert^{\beta}}\Vert_{L^1}\Vert \widehat{f}\Vert_{L^\infty}
  \le Ct^{-\frac{N}{\beta}}\Vert f\Vert_{L^1}.
\end{aligned}
\end{equation*}
The desired inequality follows by combining both cases $t\le 1$ and $t>1$.
\end{proof}

Finally we state an elementary inequality on a convolutional integral,
which will be repeatedly used in the proof of asymptotic behavior.
For the proof, it can be directly deduced by \cite[Proposition 4.5]{strain2010asymptotic}, thus we omit the details.
\begin{lemma}\label{lem:convolution}
Let $\alpha_1,\alpha_2\ge0$.
Then we have
\begin{align}
  \int_0^t\langle t-\tau\rangle^{-\alpha_1}\langle \tau\rangle^{-\alpha_2}\dd \tau
  \le C
  \begin{cases}
  \langle t\rangle^{-\min\{\alpha_1,\alpha_2\}},\quad&\max\{\alpha_1,\alpha_2\}>1,\\
  \langle t\rangle^{-\min\{\alpha_1,\alpha_2\}}\log(1+\langle t\rangle),\quad&\max\{\alpha_1,\alpha_2\}=1,\\
  \langle t\rangle^{1-\alpha_1-\alpha_2},\quad&\max\{\alpha_1,\alpha_2\}<1.
  \end{cases}
\end{align}
\end{lemma}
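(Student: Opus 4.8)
The final statement to prove is Lemma~\ref{lem:convolution}, the estimate on the convolutional time integral $\int_0^t\langle t-\tau\rangle^{-\alpha_1}\langle \tau\rangle^{-\alpha_2}\,\dd\tau$. The plan is to proceed by a direct splitting of the integration interval at the midpoint $\tau=t/2$, exploiting the monotonicity of $\langle\cdot\rangle^{-\alpha}$ and the symmetry of the two halves.

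First I would write $\int_0^t = \int_0^{t/2} + \int_{t/2}^t$. On the first half, $\tau\le t/2$ forces $t-\tau\ge t/2$, hence $\langle t-\tau\rangle^{-\alpha_1}\le C\langle t\rangle^{-\alpha_1}$ (using $\langle t-\tau\rangle \ge \tfrac12\langle t\rangle$ for such $\tau$ when $t\ge 1$, and the integral over $[0,t]$ with $t\le 1$ being trivially bounded). Pulling this factor out leaves $C\langle t\rangle^{-\alpha_1}\int_0^{t/2}\langle\tau\rangle^{-\alpha_2}\,\dd\tau$, and the remaining one-variable integral is elementary: it is $O(1)$ if $\alpha_2>1$, $O(\log(1+\langle t\rangle))$ if $\alpha_2=1$, and $O(\langle t\rangle^{1-\alpha_2})$ if $\alpha_2<1$. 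On the second half, symmetrically $\tau\ge t/2$ gives $\langle\tau\rangle^{-\alpha_2}\le C\langle t\rangle^{-\alpha_2}$, and one is left with $C\langle t\rangle^{-\alpha_2}\int_{t/2}^t\langle t-\tau\rangle^{-\alpha_1}\,\dd\tau = C\langle t\rangle^{-\alpha_2}\int_0^{t/2}\langle\sigma\rangle^{-\alpha_1}\,\dd\sigma$ after the change of variables $\sigma = t-\tau$, which is handled by the same elementary three-case estimate with $\alpha_1$ in place of $\alpha_2$.

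Combining the two halves, the first half contributes $\langle t\rangle^{-\alpha_1}$ times a factor depending on $\alpha_2$, and the second $\langle t\rangle^{-\alpha_2}$ times a factor depending on $\alpha_1$; adding them and bookkeeping the three regimes for $\max\{\alpha_1,\alpha_2\}$ yields exactly the stated trichotomy. For instance, when $\max\{\alpha_1,\alpha_2\}>1$, say $\alpha_1\ge\alpha_2$ without loss of generality after symmetrizing, one checks that the dominant term is $\langle t\rangle^{-\alpha_2} = \langle t\rangle^{-\min\{\alpha_1,\alpha_2\}}$ (the other half giving a faster-decaying or equal contribution); the boundary case $\max=1$ produces the logarithmic correction from the one-variable integral; and when both exponents are below $1$, both one-variable integrals grow like powers and the product of a decay factor with a growth factor gives the borderline rate $\langle t\rangle^{1-\alpha_1-\alpha_2}$.

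There is no serious obstacle here: the only point requiring a little care is the uniform comparison $\langle t-\tau\rangle\gtrsim\langle t\rangle$ on $[0,t/2]$ (and the symmetric statement), which must be stated with the correct implied constant and checked separately for small $t$ so that the bounds hold uniformly in $t\ge 0$ rather than only asymptotically; and the careful enumeration of cases when $\alpha_1$ and $\alpha_2$ straddle the value $1$ differently, where one simply takes the worse of the two half-interval bounds. Since this is a standard estimate, I would in fact just cite \cite[Proposition 4.5]{strain2010asymptotic} and omit the computation, as indicated in the statement.
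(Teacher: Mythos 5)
Your proposal is correct. The paper itself gives no argument for this lemma beyond citing \cite[Proposition 4.5]{strain2010asymptotic}, so your midpoint-splitting computation is precisely the standard proof underlying that citation: splitting at $\tau=t/2$, using $\langle t-\tau\rangle\ge\tfrac12\langle t\rangle$ on the first half and $\langle\tau\rangle\ge\tfrac12\langle t\rangle$ on the second, and then invoking the elementary three-case bound for $\int_0^{t/2}\langle\tau\rangle^{-\beta}\,\dd\tau$. The case bookkeeping you describe does close: e.g.\ when $\alpha_1>1\ge\alpha_2$ the term $\langle t\rangle^{-\alpha_1}\log(1+\langle t\rangle)$ or $\langle t\rangle^{-\alpha_1+1-\alpha_2}$ from the first half is dominated by $\langle t\rangle^{-\alpha_2}$, so the claimed rate $\langle t\rangle^{-\min\{\alpha_1,\alpha_2\}}$ survives. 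One small simplification: the comparison $\langle t-\tau\rangle\ge\tfrac12\langle t\rangle$ for $\tau\le t/2$ holds for every $t\ge0$ (since $1+(t-\tau)\ge 1+t/2\ge\tfrac12(1+t)$), so no separate treatment of small $t$ is actually needed.
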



\section{Global well-posedness}\label{global_solution}
In this section, we establish the global well-posedness result of the Euler-alignment system \eqref{eq.EAS} with small initial data.

First we shall provide a spectral analysis for the linearized system in the first subsection.
Then, based on the spectral result, we establish the global \emph{a priori} estimates of the strong solution.
Furthermore, we prove Theorem \ref{thm:global} concerning the existence and uniqueness result with small initial data.

Denote $a \triangleq \rho-1$ and $a_0 \triangleq \rho_0-1$. Then the Euler-alignment system \eqref{eq.EAS} recasts
\begin{equation}\label{eq.EAS.a}
\left\{
\begin{aligned}
  &\partial_t a +\Div u=-\Div(au),\\
  &\partial_t u+\mu\Lambda^{\alpha}u+{\gamma}\nabla a=\mu \big(u\Lambda^{\alpha}a-\Lambda^{\alpha}(au)\big)-u\cdot\nabla u-{\gamma}(\rho^{\gamma-2}-1)\nabla a,\\
  &(a,u)|_{t=0}=(a_0,u_0).
\end{aligned}
\right.
\end{equation}

\subsection{Spectral analysis of the linearized system}
Denote by
\begin{align}\label{def:v-P}
  v \triangleq \Lambda^{-1}\Div u, \quad \textrm{and}\quad
  \mathbb{P}\triangleq \mathrm{Id} - \nabla \Delta^{-1} \mathrm{div},
\end{align}
then $u = - \nabla \Lambda^{-1} v + \mathbb{P} u $.
In light of \eqref{eq.EAS.a}, $(a,v,\mathbb{P}u)$ satisfies the following system:
\begin{equation}\label{eq.lineareq4au}
\begin{cases}
  \partial_t a + \Lambda v=F, \\
  \partial_t v + \mu\Lambda^\alpha v-{\gamma}\Lambda a=G, \\
  \partial_t \mathbb{P}u + \mu \Lambda^\alpha \mathbb{P}u = H,
\end{cases}
\end{equation}
where $(F,G,H)$ is given by
\begin{align}
  & F \triangleq -\Div(au), \label{def:F} \\
  & G \triangleq \mu \Lambda^{-1}\Div\big(u\Lambda^{\alpha}a-\Lambda^{\alpha}(au)\big)
  -\Lambda^{-1}\Div(u\cdot\nabla u)-{\gamma}\Lambda^{-1}\Div\big((\rho^{\gamma-2}-1)\nabla a\big), \label{def:G} \\
  & H \triangleq \mu \mathbb{P}\big(u\Lambda^{\alpha}a-\Lambda^{\alpha}(au)\big)
  - \mathbb{P}(u\cdot\nabla u) . \label{def:H}
\end{align}
Clearly, $\mathbb{P}u$ has the $\alpha$-order fractional dissipation effect. Next let us consider the situation of $(a,v)$.
By taking Fourier transform with respect to $x$-variable, we see that
\begin{equation}\label{eq.lineareq4au1}
  \partial_t\begin{pmatrix}\widehat{a}\\\widehat{v}\end{pmatrix}
  -A\begin{pmatrix}\widehat{a}\\\widehat{v}\end{pmatrix} =
  \begin{pmatrix}\widehat{F}\\\widehat{G}\end{pmatrix}
  \quad\text{with}\quad
  A\triangleq \begin{pmatrix}0&-\vert \xi\vert\\{\gamma}\vert\xi\vert&-\mu\vert\xi\vert^{\alpha}\end{pmatrix}.
\end{equation}
It is easy to check that the eigenvalues of matrix $A$ are
\begin{align}\label{def:lamb-pm}
  \lambda_\pm
  \triangleq \frac{-\mu\vert\xi\vert^\alpha\mp\sqrt{\mu^2\vert\xi\vert^{2\alpha}-4\gamma\vert \xi\vert^2}}{2}
  =\frac{-\mu\vert\xi\vert^\alpha}{2}\Big{(}1\pm\sqrt{1-\tfrac{4\gamma}{\mu^2}\vert \xi\vert^{2-2\alpha}}\Big{)}.
\end{align}
We divide the discussion into three cases according to the value of $\alpha\in (0,2)$.

\noindent\textbf{Case 1:} $1<\alpha<2$.
\begin{itemize}
\item For the low-frequency regime $\vert\xi\vert^{\alpha-1}<2\sqrt\gamma/\mu$, the eigenvalues can be rewritten as
\begin{align*}
  \lambda_{\pm}=\frac{-\mu\vert\xi\vert^{\alpha}}{2}\Big{(}1\pm{i}\sqrt{\tfrac{4\gamma}{\mu^2}\vert \xi\vert^{2-2\alpha}-1}\Big{)}.
\end{align*}
Hence, we expect $a$ and $v$ to have parabolic damping.
\item For the high-frequency regime $\vert\xi\vert^{\alpha-1}> 2\sqrt\gamma/\mu$, the eigenvalues become
\begin{align*}
  &\lambda_+ = \frac{-\mu\vert\xi\vert^{\alpha}}{2}
  \Big{(}1+\sqrt{1-\tfrac{4\gamma}{\mu^2}\vert \xi\vert^{2-2\alpha}}\Big{)}\sim -\mu\vert\xi\vert^{\alpha},\\
  &\lambda_{-}
  =\frac{-\vert\xi\vert^{\alpha}}{2}\frac{{{4\gamma}\vert \xi\vert^{2-2\alpha}}}{\mu+\sqrt{\mu^2-{{4\gamma}\vert \xi\vert^{2-2\alpha}}}}\sim -\frac{\gamma}{\mu}\vert\xi\vert^{2-\alpha}.
\end{align*}
We expect that $a$ has the fractional dissipation of order $2-\alpha$
and $v$ has the fractional dissipation of order $\alpha$.
\end{itemize}

\noindent\textbf{Case 2:} $\alpha=1$.
\begin{itemize}
\item For $\sqrt\gamma/\mu\le 1/2$, the eigenvalues are
\begin{align*}
  \lambda_\pm = \frac{-\mu\vert\xi\vert}{2}\Big{(}1\pm\sqrt{1-\tfrac{4\gamma}{\mu^2}}\Big{)}\sim -\vert\xi\vert.
\end{align*}
Thus the parabolic damping of $a$ and $v$ is expected.
\item For $\sqrt\gamma/\mu>1/2$, the eigenvalues read as
\begin{align*}
  \lambda_{\pm}=\frac{-\mu\vert\xi\vert}{2}\Big{(}1\pm{i}\sqrt{\tfrac{4\gamma}{\mu^2}-1}\Big{)}.
\end{align*}
We expect that $(a,v)$ will show the one-order fractional dissipation effect.
\end{itemize}

\noindent\textbf{Case 3:} $0<\alpha<1$.
\begin{itemize}
  \item For low frequencies $\vert\xi\vert^{1-\alpha}<\mu/(2\sqrt\gamma)$, the eigenvalues are
\begin{equation}\label{eq:lamb-pm-low}
\begin{split}
  &\lambda_{+}=\frac{-\mu\vert\xi\vert^{\alpha}}{2}\Big{(}1+\sqrt{1-\tfrac{4\gamma}{\mu^2}\vert \xi\vert^{2-2\alpha}}\Big{)}\sim -\mu\vert\xi\vert^{\alpha},\\
  &\lambda_{-}
  =\frac{-\vert\xi\vert^{\alpha}}{2}\frac{{4\gamma\vert \xi
  \vert^{2-2\alpha}}}{\mu+\sqrt{\mu^2-{4\gamma}\vert \xi\vert^{2-2\alpha}}}
  \sim -\frac{\gamma}{\mu}\vert\xi\vert^{2-\alpha}.
\end{split}
\end{equation}
Thus we expect that $a$ enjoys the $2-\alpha$ order fractional dissipation and $v$ has the $\alpha$-order fractional dissipation.
\item For high frequencies $\vert\xi\vert^{1-\alpha}>\mu/(2\sqrt\gamma)$, the eigenvalues read as
\begin{align}\label{eq:lamb-pm-high}
  \lambda_{\pm}=\frac{-\mu\vert\xi\vert^{\alpha}}{2}\Big{(}1\pm{i}\sqrt{\tfrac{4\gamma}{\mu^2}
  \vert \xi\vert^{2-2\alpha}-1}\Big{)}.
\end{align}
We expect that $a$ and $v$ have the parabolic damping.
\end{itemize}

In short, based on the above analysis, we may expect that $a$ (or $u$) behaves the $2-\alpha$ (or $\alpha$)
order fractional dissipation effect in the $L^2$ estimate and $(a,u)$ enjoys the $\alpha$-order dissipation
in the $\dot{H}^s$ estimate when $0<\alpha \le 1$.

\subsection{A priori estimates}
In this subsection, we establish the global \emph{a priori} estimates for the smooth solution of system \eqref{eq.EAS.a}
with small initial data.
\begin{proposition}\label{Prop:priori.au}
Let $\alpha\in(0,1]$ and $s>\frac{N}2+1-\frac{\alpha}{2}$.
Assume that $(a,u)$ is a smooth solution of system \eqref{eq.EAS.a}.
There exists a constant $\varepsilon>0$ such that if
\begin{align}\label{eq.cond0.au}
  \Vert (a_0,u_0)\Vert_{H^s}\le\varepsilon,
\end{align}
then for every $T>0$ we have
\begin{equation}\label{eq.priori.au}
\begin{aligned}
  &\Vert (a,u)\Vert^{2}_{L^\infty_T(H^s)}
  +\int_0^T\left(\Vert\Lambda^{1-\frac{\alpha}{2}}a\Vert_{H^{s+\alpha-1}}^2
  +\Vert\Lambda^{\frac{\alpha}{2}} u\Vert^{2}_{H^s}\right)\dd t
  \le C\Vert (a_0,u_0)\Vert^{2}_{H^s}.
\end{aligned}
\end{equation}
\end{proposition}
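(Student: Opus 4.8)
The plan is to perform energy estimates on the system \eqref{eq.EAS.a}, combining an estimate for the top-order Sobolev norm $\dot H^s$ with an estimate for the lowest-order norm $L^2$, and then closing a bootstrap argument using the smallness condition \eqref{eq.cond0.au}. The spectral analysis above suggests the correct dissipative structure to aim for: in the $\dot H^s$-estimate we should recover $\alpha$-order dissipation for both $a$ and $u$, whereas in the $L^2$-estimate we should recover $(2-\alpha)$-order dissipation for $a$ (through the coupling with $\nabla a$ and the pressure) together with $\alpha$-order dissipation for $u$. The stated estimate \eqref{eq.priori.au}, which asks for $\int_0^T \|\Lambda^{1-\alpha/2}a\|_{H^{s+\alpha-1}}^2 + \|\Lambda^{\alpha/2}u\|_{H^s}^2$, is exactly the superposition of these two gains (note $\|\Lambda^{1-\alpha/2}a\|_{H^{s+\alpha-1}}^2 \sim \|\Lambda^{1-\alpha/2}a\|_{L^2}^2 + \|\Lambda^{s+\alpha/2}a\|_{L^2}^2$, so it mixes a $(2-\alpha)$-gain at low regularity with an $\alpha$-gain at top regularity).

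First I would carry out the top-order estimate. Apply $\Lambda^s$ to the $a$-equation and the $u$-equation in \eqref{eq.EAS.a}, pair with $\Lambda^s a$ and $\Lambda^s u$ respectively in $L^2$. The linear terms $\Div u$ and $\gamma\nabla a$ cancel against each other after integration by parts (up to a harmless $\gamma$-weighting of the $a$-energy, i.e. one works with $\gamma\|\Lambda^s a\|_{L^2}^2 + \|\Lambda^s u\|_{L^2}^2$), while $\mu\Lambda^\alpha u$ yields the coercive term $\mu\|\Lambda^{s+\alpha/2}u\|_{L^2}^2$. To also extract dissipation on $a$ at top order one needs the standard hypocoercivity trick: add a small multiple of $\frac{d}{dt}\int \Lambda^s u\cdot\Lambda^s\nabla a\,\dd x$ (equivalently $\int \Lambda^{s+1}v\,\Lambda^s a$) to the energy; the cross term produces $+\gamma\|\Lambda^{s+1}a\|_{L^2}^2$-type coercivity, but since our target gain on $a$ at top order is only $\alpha$-order, I would instead weight this correction by $\Lambda^{\alpha-2}$ or, more cleanly, localize in frequency and use the spectral picture of Case 3 directly — in the high-frequency zone $\lambda_-\sim -\gamma\mu^{-1}|\xi|^{2-\alpha}$ already gives more than $\alpha$-order, and in the bounded-frequency zone the eigenvalues have real part $\sim -|\xi|^\alpha$, giving the claimed $\alpha$-order gain. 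For the nonlinear right-hand sides I would use Lemma \ref{lem:prod-es} to bound $u\cdot\nabla u$, $\Div(au)$, and the Leibniz/commutator estimates \eqref{comm21}, \eqref{comm22} to handle the alignment commutator $u\Lambda^\alpha a - \Lambda^\alpha(au)$, and Lemma \ref{Lem:composite}(2) for the composition term $(\rho^{\gamma-2}-1)\nabla a$; the key point is that every nonlinear contribution carries an extra factor of $\|(a,u)\|_{H^s}$, so under \eqref{eq.cond0.au} it is absorbed by the dissipation (using $s-1>N/2$ to control $L^\infty$ norms by $H^s$).

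Next I would do the $L^2$-estimate, which is where the genuinely new $(2-\alpha)$-order dissipation on $a$ must appear. Using the $(a,v,\mathbb{P}u)$ formulation \eqref{eq.lineareq4au}: the $\mathbb{P}u$-equation trivially gives $\mu\|\Lambda^{\alpha/2}\mathbb{P}u\|_{L^2}^2$; for the $(a,v)$ block I would add to $\frac12\frac{d}{dt}(\gamma\|a\|_{L^2}^2+\|v\|_{L^2}^2)$ a correction $\eta\,\frac{d}{dt}\int \Lambda^{1-\alpha}a\,v\,\dd x$ with $\eta$ small — diagonalizing the symbol $A$ of \eqref{eq.lineareq4au1}, or equivalently computing this cross term, produces exactly the missing coercivity $\eta\gamma\|\Lambda^{1-\alpha/2}a\|_{L^2}^2$ plus lower-order junk controlled by the $v$-dissipation $\mu\|\Lambda^{\alpha/2}v\|_{L^2}^2$ and the energy itself. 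The nonlinear terms $F,G,H$ in \eqref{def:F}--\eqref{def:H} are then estimated in $L^2$ using Lemmas \ref{lem:prod-es}, \ref{lem:commutator} (note $G$ and $H$ involve $\Lambda^{-1}\Div$ or $\mathbb{P}$, which are $L^2$-bounded), each again producing a small prefactor $\|(a,u)\|_{H^s}$. Summing the $\dot H^s$- and $L^2$-estimates yields a differential inequality of the form $\frac{d}{dt}\mathcal{E}(t) + c\,\mathcal{D}(t) \le C\sqrt{\mathcal{E}(t)}\,\mathcal{D}(t)$ where $\mathcal{E}\sim\|(a,u)\|_{H^s}^2$ and $\mathcal{D}$ is the full dissipation functional in \eqref{eq.priori.au}; choosing $\varepsilon$ so that $C\sqrt{\mathcal{E}(0)}\le c/2$ and running a continuity argument gives $\mathcal{E}(t)\lesssim\mathcal{E}(0)$ and $\int_0^T\mathcal{D}\lesssim\mathcal{E}(0)$ for all $T$, which is \eqref{eq.priori.au}.

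The main obstacle, and the place requiring the most care, is arranging the hypocoercivity correction so that it simultaneously delivers the \emph{right} orders of dissipation on $a$ in the two estimates — $(2-\alpha)$-order at the $L^2$ level and only $\alpha$-order at the top level — without the correction term destroying the positivity of the modified energy or generating error terms that cannot be absorbed. Because $\alpha\in(0,1)$, one has $2-\alpha>1>\alpha$, so these are genuinely different regularizations living at different frequency scales, and one cannot use a single uniform multiplier; the cleanest route is a Littlewood–Paley / Fourier-zone decomposition aligned with the threshold $|\xi|^{1-\alpha}\sim\mu/(2\sqrt\gamma)$ from Case 3, estimating the symbol $e^{tA}$ directly in each zone, and this bookkeeping (especially matching the anisotropic gain $\|\Lambda^{1-\alpha/2}a\|_{H^{s+\alpha-1}}$ which interpolates between the two regimes) is the technical heart of the argument. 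A secondary subtlety is that the pressure-induced term $(\rho^{\gamma-2}-1)\nabla a$ has a $\nabla a$ that is one derivative worse than what the $a$-dissipation of order $2-\alpha<2$ controls, so one must pair it carefully (integrate by parts, or exploit that $\rho^{\gamma-2}-1$ is itself small of order $\|a\|_{L^\infty}$) to keep it inside the small-data absorption scheme.
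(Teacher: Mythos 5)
Your overall architecture (a weighted $L^2$-estimate plus a $\dot H^s$-estimate, each augmented by a cross term pairing $u$ with a fractional antiderivative of the density variable to generate dissipation on it — of order $2-\alpha$ at low regularity and order $\alpha$ at top regularity — followed by a smallness/continuity argument) is indeed the skeleton of the paper's proof: the paper's corrections are exactly $\int_{\R^N}u\cdot\nabla\Lambda^{-\alpha}\sigma\,\dd x$ in \eqref{eq.D.mix} and $\int_{\R^N}u\cdot\nabla\Lambda^{2s+\alpha-2}\sigma\,\dd x$ in \eqref{eq.H.mix}, and no frequency-zone decomposition of the semigroup is needed for the a priori bound. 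However, there is a genuine gap in your treatment of the quasilinear terms when you insist on estimating \eqref{eq.EAS.a} directly in the variables $(a,u)$ with the flat weights $\gamma\Vert\Lambda^s a\Vert_{L^2}^2+\Vert\Lambda^s u\Vert_{L^2}^2$. At top order, the pressure term contributes $\gamma\int((1+a)^{\gamma-2}-1)\nabla\Lambda^s a\cdot\Lambda^s u\,\dd x$ and the mass equation contributes $-\gamma\int a\,\Lambda^s\Div u\,\Lambda^s a\,\dd x$ (plus controllable commutators). These two only combine into a harmless term of the form $\int\nabla h(a)\cdot\Lambda^s a\,\Lambda^s u\,\dd x$ if their coefficients coincide, which they do not for general $\gamma\ge1$; what is left over is a term like $\int\big((1+a)^{\gamma-2}-1+a\big)\Lambda^s a\,\Lambda^s\Div u\,\dd x$, which loses a full derivative. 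It cannot be absorbed by the available dissipation, since the latter only controls $\Lambda^{s+\frac{\alpha}{2}}u$ and $\Lambda^{s+\frac{\alpha}{2}}a$ with $\frac{\alpha}{2}\le\frac12<1$, and smallness of $a$ in $L^\infty$ does not help because $\Vert\Lambda^{s+1}(a,u)\Vert_{L^2}$ is not even finite for an $H^s$ solution; your suggestions (``integrate by parts'' or ``exploit that $\rho^{\gamma-2}-1$ is small'') do not close this.

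The missing idea is precisely the symmetrizing change of unknown \eqref{sigma} borrowed from the damped Euler literature: passing to $\sigma$ turns \eqref{eq.EAS.a} into \eqref{eq.EAS.sigma}, in which the two quadratic terms $\tfrac{\gamma-1}{2}\sigma\Div u$ and $\tfrac{\gamma-1}{2}\sigma\nabla\sigma$ carry the \emph{same} coefficient, so their top-order contributions cancel after one integration by parts (this is how the paper bounds $\widetilde I_1$, $\widetilde I_2$), and the awkward composition $(\rho^{\gamma-2}-1)\nabla a$ disappears altogether from the energy estimate. The paper proves the a priori bound for $(\sigma,u)$ (Proposition \ref{Prop:priori}) and then transfers it to $(a,u)$ via the composition estimates of Lemma \ref{Lem:composite} to obtain \eqref{eq.priori.au}; without this symmetrization (or an equivalent density-weighted symmetrizer), your top-order estimate does not close for $\alpha\in(0,1)$, and the alternative route you sketch — estimating the symbol $e^{tA}$ zone by zone — would change only the linear analysis while leaving the same quasilinear loss of derivative in the nonlinearity.
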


To prove Proposition \ref{Prop:priori.au}, controlling complicated terms such as ${\gamma}(\rho^{\gamma-2}-1)\nabla a$ in a direct energy estimate for the system \eqref{eq.EAS.a} poses a challenge. To overcome this difficulty, we adopt the approach from \cite{sideris2003long} and introduce an auxiliary quantity $\sigma$, defined as follows:\begin{align}\label{sigma}
  \sigma \triangleq \left\{\begin{array}{ll}\ln \rho & \textrm{for}\;\;\gamma=1,\\
  \frac{2\sqrt{\gamma}}{\gamma-1}(\rho^{\frac{\gamma-1}{2}}-1)& \textrm{for}\;\;\gamma>1,
  \end{array}\right.
  \quad\text{and}\quad
  \sigma_0 \triangleq \left\{\begin{array}{ll}\ln \rho_0 & \textrm{for}\;\; \gamma=1,\\
  \frac{2\sqrt{\gamma}}{\gamma-1}(\rho_0^{\frac{\gamma-1}{2}}-1)& \textrm{for}\;\; \gamma>1.
  \end{array}\right.
\end{align}
Conversely, $a=\rho-1$ can be expressed by
\begin{align}\label{eq:a-exp}
  a =
  \left\{\begin{array}{ll}
  e^\sigma -1 & \quad \textrm{for}\;\; \gamma=1,\\
  \Big(\frac{\gamma-1}{2\sqrt{\gamma}}\sigma+1\Big)^{\frac{2}{\gamma-1}} -1 & \quad \textrm{for}\;\;\gamma>1.
\end{array}\right.
\end{align}
If $\rho>0$, the system \eqref{eq.EAS} (or \eqref{eq.EAS.a}, equivalently) is transformed into the
following system
\begin{equation}\label{eq.EAS.sigma}
\left\{
\begin{aligned}
  &\partial_t \sigma +\sqrt{\gamma}\Div u=-u\cdot\nabla\sigma-\tfrac{\gamma-1}{2}\sigma\Div u,\\
  &\partial_t u+\mu\Lambda^{\alpha}u+\sqrt{\gamma}\nabla \sigma=\mu \big(u\Lambda^{\alpha}a-\Lambda^{\alpha}(au)\big)
  -u\cdot\nabla u-\tfrac{\gamma-1}{2}\sigma\nabla \sigma,\\
  &(\sigma,u)|_{t=0}=(\sigma_0,u_0).
\end{aligned}
\right.
\end{equation}

For the smooth solution $(\sigma,u)$ of system \eqref{eq.EAS.sigma}, we can get the following \emph{a priori} estimate.
\begin{proposition}\label{Prop:priori}
Let $\alpha\in(0,1]$ and $s>\frac{N}2+1-\frac{\alpha}{2}$.
Assume that $(\sigma,u)$ is a smooth solution of system \eqref{eq.EAS.sigma}.
Then there exists a constant $\varepsilon^\prime>0$ such that if
\begin{align}\label{eq.cond0.su}
  \Vert (\sigma_0,u_0)\Vert_{H^s} \le \varepsilon^\prime,
\end{align}
then it holds
\begin{equation}\label{eq.Prop1}
\begin{aligned}
  & \Vert (\sigma,u)\Vert^{2}_{L^\infty_T(H^s)}
  +\int_0^T \Big(\Vert\Lambda^{\frac{\alpha}{2}} u\Vert^{2}_{H^s}
  +\Vert\Lambda^{1-\frac{\alpha}{2}}\sigma\Vert_{H^{s+\alpha-1}}^2\Big)\dd t
  \le C\Vert (\sigma_0,u_0)\Vert^{2}_{H^s}.
\end{aligned}
\end{equation}
\end{proposition}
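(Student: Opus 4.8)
The plan is to perform an energy estimate on the symmetrized system \eqref{eq.EAS.sigma}, splitting the work into a top-order $\dot H^s$ estimate and a low-order $L^2$ estimate, and then closing a bootstrap argument under the smallness hypothesis \eqref{eq.cond0.su}. I would set $E(t)^2 \triangleq \|(\sigma,u)\|_{H^s}^2$ and $D(t)^2 \triangleq \|\Lambda^{\alpha/2}u\|_{H^s}^2 + \|\Lambda^{1-\alpha/2}\sigma\|_{H^{s+\alpha-1}}^2$, and aim to prove $\frac{d}{dt}\mathcal L(t) + c\,D(t)^2 \lesssim E(t)\,D(t)^2$ for a Lyapunov functional $\mathcal L$ equivalent to $E^2$. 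The dissipation structure is dictated by the spectral analysis in the previous subsection: $u$ carries $\alpha$-order dissipation (directly from the fractional Laplacian $\mu\Lambda^\alpha u$ in the $u$-equation and from the parabolic block in $(a,v)$ at high frequencies), while $\sigma$ inherits only $(2-\alpha)$-order dissipation in $L^2$, coming from the off-diagonal coupling $\pm\sqrt\gamma\nabla\sigma$, $\pm\sqrt\gamma\Div u$. To extract the $\sigma$-dissipation I would introduce the standard cross term $\kappa\langle \Lambda^{\beta}u, \Lambda^{\beta}\nabla\sigma\rangle$ (with suitable $\beta$ at both the $L^2$ and $\dot H^s$ levels), absorb it into $\mathcal L$ for $\kappa$ small, and use the coupling to produce a coercive $\|\Lambda^{1-\alpha/2}\sigma\|^2$-type contribution; the precise power $1-\alpha/2$ emerges because the transfer of dissipation from $u$ to $\sigma$ through the symmetric coupling halves the effective order (this is exactly the mechanism behind the eigenvalue $\lambda_-\sim -\tfrac\gamma\mu|\xi|^{2-\alpha}$ in the high-frequency regime \eqref{eq:lamb-pm-high}).

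For the top-order estimate I would apply $\Lambda^s$ to both equations, pair with $\Lambda^s\sigma$ and $\Lambda^s u$ respectively, and cancel the symmetric coupling terms $\sqrt\gamma(\Lambda^s\Div u,\Lambda^s\sigma) + \sqrt\gamma(\Lambda^s\nabla\sigma,\Lambda^s u)=0$. The fractional Laplacian gives $\mu\|\Lambda^{s+\alpha/2}u\|_{L^2}^2$. The nonlinear terms on the right-hand side of \eqref{eq.EAS.sigma} are: transport terms $u\cdot\nabla\sigma$, $u\cdot\nabla u$; the "compressible" products $\sigma\Div u$, $\sigma\nabla\sigma$; and the commutator term $u\Lambda^\alpha a - \Lambda^\alpha(au)$. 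For the transport and product terms I would use Lemma \ref{lem:prod-es} together with the fact that, since $s>\frac N2+1-\frac\alpha2$, one has $\|u\|_{L^\infty}\lesssim \|u\|_{\dot H^{N/2}}$ and a fraction of a derivative can always be traded against the dissipation via interpolation: e.g. $\|u\cdot\nabla u\|_{\dot H^s}\lesssim \|u\|_{L^\infty}\|\nabla u\|_{\dot H^s}$, and $\|u\|_{L^\infty}$ or $\|\nabla u\|_{\dot H^s}$ is controlled by $E(t)$ while the remaining factor feeds $D(t)$ (when $s>\frac N2+1-\frac\alpha2$, $\frac N2 \le s+\frac\alpha2-1$ exactly permits estimating $\nabla u$ in $\dot H^{s}$ by $\|\Lambda^{\alpha/2}u\|_{H^s}$ up to lower order, hence quadratic-in-$D$ up to a factor of $E$). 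For the commutator term I would use \eqref{comm21} when $s>1$ (which holds), writing $a=\rho-1$ and expanding $\rho^{\gamma-1}$-type compositions via Lemma \ref{Lem:composite}; the key gain is that $\Lambda^\alpha a - \ldots$ produces $\|\Lambda^{s}a\|$ or $\|\Lambda^{s-1}\Lambda^\alpha a\|$ paired with $\|u\|_{L^\infty}$ or $\|\nabla u\|_{L^\infty}$, and these $a$-norms are again controlled by $D$ through $\|\Lambda^{1-\alpha/2}\sigma\|_{H^{s+\alpha-1}}$ (note $s-1 \le s+\alpha-1$ and $s+\alpha-1\ge \ldots$) after converting between $a$ and $\sigma$ using \eqref{eq:a-exp} and the composition estimates. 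The $L^2$ estimate is parallel but simpler: one pairs the equations with $(\sigma,u)$ directly, gets $\mu\|\Lambda^{\alpha/2}u\|_{L^2}^2$, and recovers $\|\Lambda^{1-\alpha/2}\sigma\|_{L^2}^2$ from the cross term; the nonlinearities are all at least quadratic and controlled by $E\cdot D^2$ (or $E\cdot E\cdot D$) by Hölder and Sobolev embedding, being careful that the low-frequency part of $\sigma$ is only controlled in $\dot H^{1-\alpha/2}$, not $L^2$-dissipatively — but that is fine since $\|\sigma\|_{L^2}$ itself sits in $E$ and need not be dissipated.

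The main obstacle, as in all such hyperbolic-parabolic problems, is the $\sigma$-dissipation: the system \eqref{eq.EAS.sigma} has no direct damping on $\sigma$, so the full dissipation functional $D(t)^2$ is \emph{not} simply the sum of the natural energy-identity dissipations, and one must carefully choose the mixed multiplier to generate $\|\Lambda^{1-\alpha/2}\sigma\|_{H^{s+\alpha-1}}^2$ while the cross-term's own time-derivative produces uncontrolled-looking terms like $(\Lambda^{\beta+\alpha}u,\Lambda^\beta\nabla\sigma)$ and $(\Lambda^\beta\nabla\Div u, \Lambda^\beta\nabla\sigma)$ that must be re-absorbed: the first into $\epsilon\|\Lambda^{\alpha/2}u\|^2 + \epsilon^{-1}\|\Lambda^{1-\alpha/2}\sigma\|^2$ by Young (this is where the exponent $1-\alpha/2$ must be matched: $\beta+\alpha/2$ on $u$ balanced against $\beta+1-\alpha/2$ on $\sigma$, i.e. shifting by $1-\alpha$ half-derivatives, consistent with $\alpha\le1$) and the second requires $\|\Lambda^\beta\nabla\Div u\|$ which must not exceed the available $u$-dissipation — forcing $\beta + 1 \le \beta + \alpha/2 + s$ at top order, i.e. again $s>1-\alpha/2$, guaranteed by hypothesis. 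Once the linear dissipative structure is correctly captured, the nonlinear terms close by smallness: choosing $\varepsilon'$ small enough that $C E(t) \le c/2$ on the maximal interval where $E(t)\le 2C\varepsilon'$ gives $\frac{d}{dt}\mathcal L + \tfrac c2 D^2 \le 0$, hence $\mathcal L(t)\le \mathcal L(0)\lesssim \|(\sigma_0,u_0)\|_{H^s}^2$, and a standard continuity argument upgrades this to the global bound \eqref{eq.Prop1}.
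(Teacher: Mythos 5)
Your overall architecture is the same as the paper's: work with the symmetrized system \eqref{eq.EAS.sigma}, run an $L^2$ and a $\dot H^s$ energy estimate, generate the $\sigma$-dissipation through cross multipliers (the paper uses $\int u\cdot\nabla\Lambda^{-\alpha}\sigma\,\dd x$ at the $L^2$ level and $\int u\cdot\nabla\Lambda^{2s+\alpha-2}\sigma\,\dd x$ at the $\dot H^s$ level, which is your $\beta=-\alpha/2$ and $\beta=s+\alpha/2-1$), absorb the cross terms with small parameters, and close by smallness plus a continuity argument. Up to the packaging ($\frac{\dd}{\dd t}\mathcal L+cD^2\lesssim ED^2$ versus the paper's integrated $X(T)\le CX_0+CX^{3/2}(T)$), that part is the paper's proof.

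However, there is a genuine gap in your treatment of the top-order transport and pressure nonlinearities. You propose to bound $\|u\cdot\nabla u\|_{\dot H^s}\lesssim\|u\|_{L^\infty}\|\nabla u\|_{\dot H^s}$ and claim that $\|\nabla u\|_{\dot H^s}$ is controlled by $E(t)$, or by $\|\Lambda^{\alpha/2}u\|_{H^s}$ ``up to lower order.'' This is false: $\|\nabla u\|_{\dot H^s}=\|u\|_{\dot H^{s+1}}$, while the strongest norms at your disposal are $\|u\|_{H^s}$ (inside $E$) and $\|u\|_{\dot H^{s+\alpha/2}}$ (inside $D$); since $\alpha\le1$ one has $s+1>s+\alpha/2$, so no interpolation between $E$ and $D$ reaches $\dot H^{s+1}$. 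Even after shifting $\alpha/2$ derivatives onto the test function, i.e.\ estimating $\int\Lambda^{s-\alpha/2}(u\cdot\nabla u)\cdot\Lambda^{s+\alpha/2}u\,\dd x$, you still need $\|u\|_{\dot H^{s+1-\alpha/2}}$, and $s+1-\alpha/2>s+\alpha/2$ whenever $\alpha<1$; the same obstruction appears for $u\cdot\nabla\sigma$ and for $\sigma\nabla\sigma$, $\sigma\Div u$ tested against $\Lambda^{2s}\sigma$, $\Lambda^{2s}u$, because the $\sigma$-dissipation also tops out at $\dot H^{s+\alpha/2}$. The missing idea — which is precisely how the paper handles $\widetilde I_1$--$\widetilde I_3$ in Step 2 — is the commutator cancellation
\begin{equation*}
  \int_{\R^N}\Lambda^s(u\cdot\nabla f)\,\Lambda^s f\,\dd x
  =\int_{\R^N}\big(\Lambda^s(u\cdot\nabla f)-u\cdot\nabla\Lambda^s f\big)\Lambda^s f\,\dd x
  -\tfrac12\int_{\R^N}\Div u\,|\Lambda^s f|^2\,\dd x,
\end{equation*}
combined with the antisymmetric pairing of $\sigma\nabla\sigma$ (in the $u$-equation) against $\sigma\Div u$ (in the $\sigma$-equation), which is exactly what the symmetrization via $\sigma$ was introduced for. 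After these cancellations only $\|\nabla u\|_{L^\infty}$ and $\|\nabla\sigma\|_{L^\infty}$ survive at top order, and these are bounded by the dissipation through \eqref{eq:embed1}--\eqref{eq:embed2}, yielding the needed $E\,D^2$ bound. Without this step your claimed differential inequality cannot be established for $\alpha<1$, so as written the argument does not close on the main parameter range of the proposition.
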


With Proposition \ref{Prop:priori} at our disposal, we can immediately show the proof of Proposition \ref{Prop:priori.au}.
\begin{proof}[Proof of Proposition \ref{Prop:priori.au}]
By letting $0<\varepsilon<1$ small enough, the initial assumption \eqref{eq.cond0.au}
and Lemma \ref{Lem:composite} ensure \eqref{eq.cond0.su}.
Then according to Proposition \ref{Prop:priori}, we obtain the \emph{a priori} estimate \eqref{eq.Prop1} of $(\sigma,u)$.
For small enough $\varepsilon^\prime$, we see that
\begin{align*}
  \Vert \sigma\Vert^2_{L^\infty_T(L^\infty)} \le C \Vert \sigma\Vert^2_{L^\infty_T(H^s)}
  < \tfrac{2\sqrt{\gamma}}{\gamma-1}.
\end{align*}
Hence, using \eqref{eq:a-exp} and Lemma \ref{Lem:composite} again, we get the desired estimate \eqref{eq.priori.au}.
\end{proof}

Now we turn to the proof of Proposition \ref{Prop:priori}.
\begin{proof}[Proof of Proposition \ref{Prop:priori}]
We divide the proof into three steps.
\vskip1mm

\noindent \textbf{Step 1}: $L^2$-estimate. We shall prove that there exist
two positive constants $C=C(\alpha,s,N,\gamma,\mu)$ and
$\widetilde{C}_1 = \widetilde{C}_1(\alpha,s,N)$
such that for every $s> \frac{N}{2} +1-\frac{\alpha}{2}$,
\begin{align}\label{eq.D.Lem1}
  &\Vert (\sigma,u)\Vert^{2}_{L^\infty_T(L^2)}
  +\int_0^T \Big( \delta_1 \sqrt{\gamma}\Vert\Lambda^{1-\frac{\alpha}{2}}\sigma\Vert_{L^2}^2
  + \mu \Vert\Lambda^{\frac{\alpha}{2}} u\Vert^2_{L^2}\Big) \dd t
  \le C\Vert (\sigma_0,u_0)\Vert^{2}_{H^s} +  \widetilde{C}_1 \delta_1
  \Vert \Lambda^s\sigma\Vert^{2}_{L^\infty_T(L^2)}  \nonumber \\
  & + \widetilde{C}_1 \delta_1  \int_0^T
  \Vert\Lambda^{\frac{\alpha}{2}+s} u\Vert^2_{L^2} \dd t
  + C\Vert (\sigma,u)\Vert_{L^\infty_T(H^s)}
  \int_0^T \Big(\Vert \Lambda^{1-\frac{\alpha}{2}}\sigma\Vert_{H^{s+\alpha-1}}^2
  +\Vert \Lambda^{\frac{\alpha}{2}}u\Vert_{H^s}^2\Big) \dd t,
\end{align}
where $0<\delta_1 \leq \min\{1,\frac{\mu}{\sqrt{\gamma}}\}$ is a constant chosen later.

Taking the $L^2$ inner product of the first equation of \eqref{eq.EAS.sigma}
with $\sigma$ and performing the integrations by parts, we see that
\begin{align}\label{eq.D.sigma}
  \frac{1}{2}\frac{\dd}{\dd t}\Vert \sigma\Vert^{2}_{L^2}+\sqrt{\gamma} \int_{\R^N}\sigma\Div u\,\dd x
  =\frac{2-\gamma}{2} \int_{\R^N}\sigma^2\Div u\,\dd x \triangleq I.
\end{align}
A similar $L^2$-energy argument of $u$ gives
\begin{equation}\label{eq.D.u}
\begin{aligned}
  &\frac{1}{2}\frac{\dd}{\dd t}\Vert u\Vert^{2}_{L^2} +\mu\Vert\Lambda^{\frac{\alpha}{2}} u\Vert^{2}_{L^2}
  - \sqrt{\gamma}\int_{\R^N} \sigma \Div u \,\dd x \\
  & \hspace{.1cm}= \int_{\R^N} u^2\Div u\,\dd x
  + \mu \int_{\R^N} a \big(\Lambda^{\alpha} |u|^2-u\cdot\Lambda^{\alpha}u\big)\dd x
  - \frac{\gamma-1}{4} \int_{\R^N}\sigma^2\Div u \,\dd x   \triangleq \sum_{j=1}^{3}J_j.
\end{aligned}
\end{equation}
In order to get the smoothing effect of $\sigma$,
we consider the following quantity $\int_{\R^N} u\cdot\nabla \Lambda^{-\alpha}\sigma\dd x$
(recalling the spectral analysis in low frequencies).
By taking the $L^2$ inner product of the first and second equations of \eqref{eq.EAS.sigma} with
$\Lambda^{1-\alpha} v = \Lambda^{-\alpha} \mathrm{div}\,u$
and $\nabla\Lambda^{-\alpha}\sigma$ respectively, and summing them up, we obtain
\begin{equation}\label{eq.D.mix}
\begin{aligned}
  &\quad\frac{\dd}{\dd t}\int_{\R^N} u\cdot\nabla \Lambda^{-\alpha}\sigma\dd x
  + \sqrt{\gamma} \Vert\Lambda^{1-\frac{\alpha}{2}}\sigma\Vert_{L^2}^2 -\mu\int_{\R^N} \sigma\Div u\dd x
  -\sqrt{\gamma}\Vert\Lambda^{-\frac{\alpha}{2}}\Div u\Vert_{L^2}^2\\
  &=\int_{\R^N} u\cdot\nabla\sigma\Lambda^{-\alpha}\Div u\,\dd x
  +\frac{\gamma-1}{2}\int_{\R^N}\sigma\Div u\,\Lambda^{-\alpha}\Div u\,\dd x
  -\frac{\gamma-1}{4}\int_{\R^N} \sigma^2\Lambda^{2-\alpha}\sigma\,\dd x \\
  &\quad+\mu \int_{\R^N} \big(u\Lambda^{\alpha}a-\Lambda^{\alpha}(a u)\big) \cdot \nabla \Lambda^{-\alpha}\sigma\,\dd x
  - \int_{\R^N} \big(u\cdot\nabla u\big) \cdot\nabla \Lambda^{-\alpha}\sigma\,\dd x
  \triangleq \sum_{j=1}^5 K_j.
\end{aligned}
\end{equation}
Since $s>1-\alpha$, taking advantage of H\"older's inequality and  the interpolation inequality,
we deduce that there exists a constant $C_1>0$ depending only on $\alpha,s$ such that
\begin{equation}\label{eq.D.L1}
\begin{aligned}
  \left\vert\int_{\R^N} u\cdot\nabla \Lambda^{-\alpha}\sigma\,\dd x\right\vert
  &\le \Vert u\Vert_{L^2}\Vert \nabla \Lambda^{-\alpha}\sigma\Vert_{L^2}\le C\Vert u\Vert_{L^2}
  \Vert \sigma\Vert_{L^2}^{\frac{s+\alpha-1}{s}}\Vert \Lambda^{s}\sigma\Vert_{L^2}^{\frac{1-\alpha}{s}} \\
  & \le C_1\Big(\tfrac{1}{2}\Vert u\Vert_{L^2}^2
  + \tfrac{s+\alpha-1}{2s} \Vert\sigma\Vert_{L^2}^2 + \tfrac{1-\alpha}{2s}\Vert\Lambda^{s}\sigma\Vert_{L^2}^{2}\Big),
\end{aligned}
\end{equation}
and
\begin{equation}\label{eq.D.L2}
\begin{split}
  \sqrt{\gamma}\Vert\Lambda^{-\frac{\alpha}{2}}\Div u\Vert_{L^2}^2
  & \le C \sqrt{\gamma} \Vert\Lambda^{\frac{\alpha}{2}}u\Vert_{L^2}^{\frac{2(s+\alpha-1)}{s}}
  \Vert \Lambda^{\frac{\alpha}{2}+s}u\Vert_{L^2}^{\frac{2(1-\alpha)}{s}}  \\
  & \le  C_1 \sqrt{\gamma}\Big( \tfrac{s+\alpha-1}{2s}\Vert\Lambda^{\frac{\alpha}{2}}u\Vert_{L^2}^2
  + \tfrac{1-\alpha}{2s}\Vert \Lambda^{s+\frac{\alpha}{2}}u \Vert_{L^2}^2\Big).
\end{split}
\end{equation}
We set $0< \delta_1 \leq \min\{1,\tfrac{\mu}{\sqrt{\gamma}}\}$ to be a constant chosen later and
\begin{align*}
  Y(t) \triangleq \Big( 1 + \tfrac{ \delta_1 \mu}{2\sqrt\gamma C_1}\Big) \Vert \sigma(t)\Vert^2_{L^2}
  + \Vert u(t)\Vert^2_{L^2} +  \frac{\delta_1}{C_1} \int_{\R^N} u\cdot\nabla \Lambda^{-\alpha}\sigma(t)\dd x.
\end{align*}
Thanks to \eqref{eq.D.L1}, it is obvious to get
\begin{align}\label{eq.D.Ysim}
  \tfrac{1}{2} \big(\Vert \sigma\Vert^2_{L^2} + \Vert u\Vert^2_{L^2}\big)
  - \tfrac{(1-\alpha)\delta_1}{2s}\Vert\Lambda^s\sigma\Vert^2_{L^2}
  \le Y(t) \le C \big(\Vert \sigma\Vert^{2}_{L^2}+\Vert u\Vert^{2}_{L^2}\big)
  +\tfrac{(1-\alpha)\delta_1}{2s} \Vert\Lambda^{s}\sigma\Vert^2_{L^2}.
\end{align}
Collecting the equations \eqref{eq.D.sigma}-\eqref{eq.D.mix}, and using \eqref{eq.D.L2}, we have
\begin{equation}\label{eq.D.mix1}
\begin{aligned}
  &\frac{1}{2}\frac{\dd}{\dd t}Y(t)
  +\frac{\mu}{2}\Vert\Lambda^{\frac{\alpha}{2}} u\Vert^{2}_{L^2}
  +\tfrac{ \delta_1 \sqrt{\gamma}}{2 C_1} \Vert\Lambda^{1-\frac{\alpha}{2}}\sigma\Vert_{L^2}^2
  \le C \bigg(\vert I\vert +  \sum_{j=1}^{3} |J_j| + \sum_{j=1}^5\vert K_j\vert \bigg)
  + \tfrac{\delta_1 \sqrt{\gamma}(1-\alpha)}{4s} \Vert \Lambda^{\frac{\alpha}{2}+s}u\Vert_{L^2}^2.
\end{aligned}
\end{equation}

Next we estimate the right-hand side of \eqref{eq.D.mix1} one by one.
For the terms $ I$ and $J_3$, by virtue of the $L^2$ boundedness of the Riesz transform and Lemma \ref{lem:Leibniz},
we have
\begin{align}\label{eq.D.B1}
  \vert I\vert + \vert J_3 \vert
  \le C\Vert\Lambda^{1-\frac{\alpha}{2}}\sigma^2\Vert_{L^2}\Vert\Lambda^{\frac{\alpha}{2}-1}\Div u\Vert_{L^2}
  \le C\Vert\sigma\Vert_{L^\infty} \Vert\Lambda^{1-\frac{\alpha}{2}}\sigma\Vert_{L^2}\Vert\Lambda^{\frac{\alpha}{2}} u\Vert_{L^2}.
\end{align}
A similar argument gives
\begin{align*}
  \vert J_1\vert = \Big|\int_{\R^N} u^2\Div u \,\dd x\Big|
  \le C\Vert u\Vert_{L^\infty}\Vert\Lambda^{1-\frac{\alpha}{2}}u\Vert_{L^2}\Vert\Lambda^{\frac{\alpha}{2}} u\Vert_{L^2}.
\end{align*}
For $J_2$ given by \eqref{eq.D.u}, using \eqref{eq:a-exp} and Lemmas \ref{lem:commutator} and \ref{Lem:composite},
we have
that for every $\alpha\in(0,1)$,
\begin{align*}
  |J_2|
  \le C\Vert a\Vert_{L^2}\Vert \Lambda^\alpha |u|^2 - u\cdot\Lambda^{\alpha}u\Vert_{L^2}
  \le C\Vert \sigma\Vert_{L^2}\Vert \Lambda^{\alpha}u\Vert_{L^2}\Vert u\Vert_{L^\infty},
\end{align*}
and for $\alpha=1$,
\begin{equation*}
\begin{aligned}
  |J_2|
  &= \Big| \mu \int_{\R^N} a(\Lambda^{\alpha}|u|^2-2u\cdot\Lambda^\alpha u)\,\dd x
  +\mu \int_{\R^N} \Lambda^{\frac{\alpha}{2}}(au)\cdot\Lambda^{\frac{\alpha}{2}}u\,\dd x \Big| \\
  &\le C\Vert a\Vert_{L^\infty}\Vert \Lambda^{\frac{\alpha}{2}}u\Vert_{L^2}^2
  + C \Vert u\Vert_{L^2} \Vert \Lambda^{\frac{\alpha}{2}}a\Vert_{L^\infty}
  \Vert \Lambda^{\frac{\alpha}{2}}u\Vert_{L^2} \\
  &\le C\Vert \sigma\Vert_{L^\infty}\Vert \Lambda^{\frac{\alpha}{2}}u\Vert_{L^2}^2
  + C \Vert u\Vert_{L^2} \Vert \Lambda^{\frac{\alpha}{2}}\sigma\Vert_{H^{s +\alpha -1}}
  \Vert \Lambda^{\frac{\alpha}{2}}u\Vert_{L^2},
\end{aligned}
\end{equation*}
where in the last line we have used the embedding $H^{s'+\alpha -1} \hookrightarrow L^\infty$ for every $s' > \frac{N}{2} +1-\frac\alpha 2$.
By the $L^2$-boundedness of the Riesz transform and Lemma \ref{lem:Leibniz}, we can estimate $K_1$ and $K_3$ as follows:
\begin{equation*}
\begin{aligned}
  \vert K_1\vert
  &\le C\Vert \Lambda^{\frac{\alpha}{2}}(u\,\Lambda^{-\alpha}\Div u)\Vert_{L^2}
  \Vert \nabla\Lambda^{-\frac{\alpha}{2}}\sigma\Vert_{L^2} \\
  &\le C\big(\Vert \Lambda^{\frac{\alpha}{2}}u\Vert_{L^\infty}\Vert\Lambda^{1-\alpha}u\Vert_{L^2}
  +\Vert u\Vert_{L^\infty}\Vert\Lambda^{1-\frac{\alpha}{2}} u\Vert_{L^2}\big)
  \Vert\Lambda^{1-\frac{\alpha}{2}}\sigma\Vert_{L^2},
\end{aligned}
\end{equation*}
and
\begin{align*}
  \vert K_3\vert
  \le C\Vert \Lambda^{1-\frac{\alpha}{2}}\sigma^2\Vert_{L^2}\Vert \Lambda^{1-\frac{\alpha}{2}}\sigma\Vert_{L^2}
  \le C\Vert \sigma\Vert_{L^\infty}\Vert \Lambda^{1-\frac{\alpha}{2}}\sigma\Vert_{L^2}^2.
\end{align*}
For $K_2$, by applying Lemma \ref{lem:Leibniz} again,
we find that for every $\frac{2}{3}\leq \alpha \leq 1$,
\begin{equation*}
\begin{aligned}
  \vert K_2\vert
  &\le C \Vert \Lambda^{1-\frac{\alpha}{2}}(\sigma\Lambda^{-\alpha}\Div u)\Vert_{L^2}
  \Vert\Lambda^{\frac{\alpha}{2}-1}\Div u\Vert_{L^2} \\
  &\le C \big(\Vert \Lambda^{1-\frac{\alpha}{2}}\sigma\Vert_{L^\infty}\Vert\Lambda^{1-\alpha} u\Vert_{L^2}
  +\Vert\sigma\Vert_{L^\infty}\Vert\Lambda^{2-\frac{3\alpha}{2}}u\Vert_{L^2}\big)
  \Vert\Lambda^{\frac{\alpha}{2}} u\Vert_{L^2},
\end{aligned}
\end{equation*}
and for every $0<\alpha < \frac{2}{3}$,
\begin{align*}
  \vert K_2 \vert
  & \leq C \|\sigma\|_{L^\infty} \|\Div u\|_{L^2} \|\Lambda^{-\alpha}\Div u\|_{L^2} \\
  & \leq C \|\sigma\|_{L^\infty} \|\Lambda u\|_{L^2} \|\Lambda^{1-\alpha} u\|_{L^2}.
\end{align*}
The term $K_4$ is treated in a similar way as $J_2$:
owing to Lemmas \ref{lem:commutator} and \ref{Lem:composite}, we see that for every $\alpha\in(0,1)$,
\begin{align*}
  \vert K_4\vert
  &\le C\Vert a\Vert_{L^2}\Vert\Lambda^\alpha \big(u\cdot\nabla \Lambda^{-\alpha}\sigma)-u\cdot\nabla\sigma\big)\Vert_{L^2}
  \le C\Vert a\Vert_{L^2}\Vert\Lambda^{\alpha}u\Vert_{L^2}\Vert \Lambda^{1-\alpha}\sigma\Vert_{L^\infty},
\end{align*}
and for $\alpha=1$,
\begin{equation*}
\begin{aligned}
  \vert K_4\vert
  &= \Big| \mu \int_{\R^N} (u\Lambda^{\alpha}a-\Lambda^{\frac{\alpha}{2}}(u\Lambda^{\frac{\alpha}{2}}a))\cdot\nabla \Lambda^{-\alpha}\sigma\,\dd x
  +\mu \int_{\R^N} (u\Lambda^{\frac{\alpha}{2}}a-\Lambda^{\frac{\alpha}{2}}(ua))\cdot\nabla \Lambda^{-\frac{\alpha}{2}}\sigma\dd x \Big| \\
  &\le C\Vert \Lambda^{1-\alpha}\sigma\Vert_{L^2}\Vert \Lambda^{\frac{\alpha}{2}}u\Vert_{L^2}
  \Vert\Lambda^{\frac{\alpha}{2}}a\Vert_{L^\infty}+C\Vert \Lambda^{1-\frac{\alpha}{2}}\sigma\Vert_{L^2}\Vert \Lambda^{\frac{\alpha}{2}}u\Vert_{L^2} \Vert a\Vert_{L^\infty}  \\
  &\le C\Vert \Lambda^{1-\alpha}\sigma\Vert_{L^2} \Vert \Lambda^{\frac{\alpha}{2}}u\Vert_{L^2}
  \Vert\Lambda^{\frac{\alpha}{2}}\sigma\Vert_{H^s} + C\Vert \Lambda^{1-\frac{\alpha}{2}}\sigma\Vert_{L^2}\Vert \Lambda^{\frac{\alpha}{2}}u\Vert_{L^2} \Vert \sigma\Vert_{L^\infty}.
\end{aligned}
\end{equation*}
For $K_5$, in light of Lemma \ref{lem:Leibniz}, we obtain
\begin{equation*}
\begin{aligned}
  \vert K_5\vert
  &\le C\Vert \Lambda^{-\frac{\alpha}{2}}\partial_iu^j\Vert_{L^2}
  \Vert\Lambda^{\frac{\alpha}{2}}(u^i\partial_j\Lambda^{-\alpha}\sigma)\Vert_{L^2}\\
  &\le C\Vert \Lambda^{1-\frac{\alpha}{2}}u\Vert_{L^2} \big(\Vert\Lambda^{\frac{\alpha}{2}}u\Vert_{L^\infty}
  \Vert\Lambda^{1-\alpha}\sigma\Vert_{L^2}+\Vert u\Vert_{L^\infty}\Vert\Lambda^{1-\frac{\alpha}{2}}\sigma\Vert_{L^2} \big),
\end{aligned}
\end{equation*}
where we have used the Einstein summation convention on repeated indices.

Note that via the Sobolev embedding and the interpolation inequality,
the following inequalities hold that for every $s\ge s'> \frac{N}{2}+1-\frac{\alpha}{2} $,
\begin{align}
  &\label{eq.D.tos1}\Vert \sigma\Vert_{L^\infty}+\Vert u\Vert_{L^\infty}+\Vert \Lambda^{1-\alpha}\sigma\Vert_{L^2}
  +\Vert \Lambda^{1-\alpha}u\Vert_{L^2}\le C \Vert (\sigma ,u)\Vert_{H^s},\\
  &\label{eq.D.tos2}\Vert\Lambda^{1-\frac{\alpha}{2}}u\Vert_{L^2}+\Vert\Lambda^{\alpha}u\Vert_{L^2}
  +\|\Lambda u\|_{L^2} \le C 
  \Vert \Lambda^{\frac{\alpha}{2}}u\Vert_{H^{s'}}, \\
  & \label{eq.D.tos2.2} \|\Lambda^{2-\frac{3\alpha}{2}} u\|_{L^2} \leq C \|\Lambda^{\frac{\alpha}{2}} u\|_{H^{s'}} ,\quad \textrm{for}\; \alpha\in [\tfrac{2}{3},1],\quad \|\Lambda^{1-\alpha} u\|_{L^2} \leq C \|\Lambda^{\frac{\alpha}{2}} u\|_{H^{s'}},
  \quad \textrm{for}\; \alpha\in(0,\tfrac{2}{3}),
\end{align}
and for every $\alpha\in (0,1)$,
\begin{align}\label{eq.D.tos3}
  \Vert\Lambda^{1-\alpha}\sigma\Vert_{L^\infty} \leq C\Vert \Lambda^{1-\frac{\alpha}{2}}\sigma\Vert_{H^{s'+\alpha-1}},
  \quad \Vert u\Vert_{L^\infty}\le C\Vert \Lambda^{\frac{\alpha}{2}}u\Vert_{H^{s'}}.
\end{align}
Inserting the above estimates on $|I|$, $|J_j|$, $|K_j|$ into \eqref{eq.D.mix1}
and using \eqref{eq.D.tos1}-\eqref{eq.D.tos3}, we get
\begin{equation}\label{eq.D.mix2}
\begin{aligned}
  \frac{\dd}{\dd t}Y(t)
  &+{\mu}\Vert\Lambda^{\frac{\alpha}{2}} u\Vert^{2}_{L^2}
  + \tfrac{\delta_1 \sqrt{\gamma}}{C_1} \Vert\Lambda^{1-\frac{\alpha}{2}}\sigma\Vert_{L^2}^2\\
  & \le C\Vert \sigma,u)\Vert_{H^s} \big(\Vert \Lambda^{1-\frac{\alpha}{2}}\sigma\Vert_{H^{s+\alpha-1}}^2
  + \Vert \Lambda^{\frac{\alpha}{2}}u\Vert_{H^s}^2\big)
  +  \tfrac{\delta_1 \sqrt{\gamma}(1-\alpha)}{2s} \Vert \Lambda^{s+\frac{\alpha}{2}}u\Vert_{L^2}^{2}.
\end{aligned}
\end{equation}
Integrating \eqref{eq.D.mix2} over the time variable and using inequality \eqref{eq.D.Ysim},
we obtain the estimate \eqref{eq.D.Lem1}.
\vskip1mm

\noindent \textbf{Step 2}: $\dot H^s$-estimate.
We shall prove that there exist two positive constants $C=C(\alpha,s,N,\gamma,\mu)$ and
$\widetilde{C}_2 = \widetilde{C}_2(\alpha,s,N)$ for every $s> \frac{N}{2} +1-\frac{\alpha}{2}$,
\begin{align}\label{eq.H.mix3-0}
  &\Vert (\Lambda^{s}\sigma,\Lambda^{s} u)\Vert^{2}_{L^2}
  +\int_0^T \Big(\mu \Vert\Lambda^{s+\frac{\alpha}{2}} u\Vert^{2}_{L^2}
  +\delta_2 \sqrt{\gamma} \Vert\Lambda^{s+\frac{\alpha}{2}}\sigma\Vert_{L^2}^2\Big)\dd t
  \le C\Vert (\sigma_0,u_0)\Vert^{2}_{H^s} + \widetilde{C}_2 \delta_2 \Vert u\Vert_{L^\infty_T(L^2)}^2 \nonumber \\
  &\hspace{.5cm}+ \tfrac{\widetilde{C}_2\delta_2 \mu^2 }{\sqrt{\gamma}}\int_0^T\Vert\Lambda^{\frac{\alpha}{2}}u\Vert_{L^2}^2\dd t
  +C \Vert(\sigma,u)\Vert_{L^\infty(H^s)}\int_0^T \Big(\Vert \Lambda^{\frac{\alpha}{2}}u\Vert_{H^s}^2
  +\Vert \Lambda^{1-\frac{\alpha}{2}}\sigma\Vert_{H^{s+\alpha-1}}^2 \Big)\dd t,
\end{align}
where $0<\delta_2 \leq \min\{1, \frac{\sqrt{\gamma}}{\mu}, \frac{\mu}{\sqrt{\gamma}}\}$ is a constant chosen later.

Multiplying the first and second equation of \eqref{eq.EAS.sigma} with $\Lambda^{2s}\sigma$ and $\Lambda^{2s}u$
respectively, integrating over $\R^N$ and summing them up, we get
\begin{equation}\label{eq.H.sigma.u}
\begin{aligned}
  &\frac{1}{2}\frac{\dd}{\dd t}\big(\Vert \Lambda^{s}\sigma\Vert^{2}_{L^2}
  +\Vert\Lambda^{s} u\Vert^2_{L^2} \big) + \mu\Vert\Lambda^{s+\frac{\alpha}{2}} u\Vert^{2}_{L^2}
  =\tfrac{1-\gamma}{2}\int_{\R^N} \Big(\sigma\Div u\Lambda^{2s}\sigma+\Lambda^{2s}u\cdot(\sigma\nabla\sigma)\Big)\dd x\\
  &-\int_{\R^N} u\cdot\nabla\sigma\Lambda^{2s}\sigma\,\dd x - \int_{\R^N} u\cdot\nabla u\cdot\Lambda^{2s} u\,\dd x
  + \mu \int_{\R^N} \big(u\Lambda^{\alpha}a - \Lambda^\alpha(au)\big)\cdot\Lambda^{2s}u\,\dd x
  \triangleq \sum_{j=1}^{4}\widetilde{I}_j.
\end{aligned}
\end{equation}
In order to develop the smoothing effect of $\sigma$,
we consider the quantity $\int_{\R^N} u\cdot\nabla \Lambda^{2s+\alpha-2}\sigma\dd x$
(this is in accordance with the spectral analysis in high-frequencies).
Arguing as \eqref{eq.D.mix}, we infer that
\begin{align}\label{eq.H.mix}
  &\quad\notag \frac{\dd}{\dd t}\int_{\R^N} u\cdot\nabla \Lambda^{2s+\alpha-2}\sigma\,\dd x
  +\sqrt{\gamma}\Vert\Lambda^{s+\frac{\alpha}{2}}\sigma\Vert_{L^2}^2
  + \mu\int_{\R^N} u\cdot\nabla\Lambda^{2(s+\alpha-1)}\sigma\,
  \dd x- \sqrt{\gamma}\Vert\Lambda^{s+\frac{\alpha}{2}-1}\Div u\Vert_{L^2}^2 \\
  &=\int_{\R^N} u\cdot\nabla\sigma\Lambda^{2s+\alpha-2}\Div u\,\dd x + \tfrac{\gamma-1}{2}
  \int_{\R^N}\sigma \Div u\Lambda^{2s+\alpha-2}\Div u\,\dd x
  -\tfrac{\gamma-1}{4}\int_{\R^N} \sigma^2\Lambda^{2s+\alpha}\sigma\,\dd x \notag \\
  &\quad+\mu \int_{\R^N}\big(u\Lambda^\alpha a-\Lambda^\alpha(a u)\big)\cdot\nabla\Lambda^{2s+\alpha-2}\sigma\,\dd x
  - \int_{\R^N}(u\cdot\nabla u)\cdot\nabla\Lambda^{2s+\alpha-2}\sigma\,\dd x
  \triangleq \sum_{j=1}^5 \widetilde{J}_j.
\end{align}
Through the interpolation inequality and the $L^2$-boundedness of Riesz transform,
there exists a constant $C_2>0$ depending only on $\alpha,s$ such that
\begin{equation}\label{eq.H.L1}
\begin{aligned}
  \left\vert\int_{\R^N}u\cdot\nabla \Lambda^{2s+\alpha-2}\sigma\,\dd x \right\vert
  &\le \Vert \Lambda^{s+\alpha-2}\Div u\Vert_{L^2}\Vert \Lambda^s\sigma\Vert_{L^2}
  \le C\Vert u\Vert_{L^2}^{\frac{1-\alpha}{s}}\Vert \Lambda^s u\Vert_{L^2}^{\frac{s-1+\alpha}{s}}
  \Vert \Lambda^s \sigma\Vert_{L^2}\\
  &\le C_2 \Big(\tfrac{1-\alpha}{2s} \Vert u\Vert_{L^2}^2 + \tfrac{s-1+\alpha}{2s}\Vert \Lambda^s u\Vert_{L^2}^2
  + \tfrac{1}{2}\Vert \Lambda^s\sigma\Vert_{L^2}^2\Big),
\end{aligned}
\end{equation}
and
\begin{equation}\label{eq.H.L2}
\begin{aligned}
  \mu\left\vert\int_{\R^N} u\cdot\nabla\Lambda^{2(s+\alpha-1)}\sigma\,\dd x\right\vert
  &\le \mu\Vert\Lambda^{s+\frac{\alpha}{2}}\sigma\Vert_{L^2}\Vert\Lambda^{s+\frac{3\alpha}{2}-2}\Div u\Vert_{L^2}\\
  &\le  C_2 \tfrac{\mu^2}{\sqrt{\gamma}} \Big(\tfrac{1-\alpha}{2s}\Vert \Lambda^{\frac{\alpha}{2}}u\Vert_{L^2}^2 + \tfrac{s-1+\alpha}{2s}
  \Vert \Lambda^{s+\frac{\alpha}{2}} u\Vert_{L^2}^2 \Big)
  +\tfrac{\sqrt\gamma}{2}\Vert \Lambda^{s+\frac{\alpha}{2}}\sigma\Vert_{L^2}^2,
\end{aligned}
\end{equation}
and
\begin{align}\label{eq.H.L3}
  \sqrt{\gamma}\Vert\Lambda^{s+\frac{\alpha}{2}-1}\Div u\Vert_{L^2}^2
  \le C_2 \tfrac{\sqrt{\gamma}}{\mu} \tfrac{\mu}{2} \Vert\Lambda^{s+\frac{\alpha}{2}}u\Vert_{L^2}^2.
\end{align}
Let $0<\delta_2 \leq \min\{1, \frac{\sqrt{\gamma}}{\mu}, \frac{\mu}{\sqrt{\gamma}}\}$ be a fixed constant chosen later,
then we define
\begin{align*}
  \widetilde{Y}(t)
  \triangleq\Vert \Lambda^s\sigma(t)\Vert^2_{L^2} + \Vert\Lambda^s u(t)\Vert^2_{L^2}
  +  \frac{\delta_2}{C_2} \int_{\R^N} u\cdot \nabla \Lambda^{2s+\alpha-2}\sigma(t)\dd x.
\end{align*}
In view of the inequality \eqref{eq.H.L1}, we see that
\begin{align}\label{eq.H.Ysim}
  \tfrac{1}{2}(\Vert \Lambda^s\sigma\Vert^2_{L^2}
  +\Vert\Lambda^s u\Vert^2_{L^2})-\tfrac{\delta_2 (1-\alpha)}{2s}\Vert u\Vert_{L^2}^2\le
  \widetilde{Y}(t) \le 2(\Vert \Lambda^{s}\sigma\Vert^{2}_{L^2}+ \Vert\Lambda^s u\Vert^2_{L^2})
  +\tfrac{\delta_2 (1-\alpha)}{2s}\Vert u\Vert_{L^2}^2.
\end{align}
Gathering the equations \eqref{eq.H.sigma.u}-\eqref{eq.H.mix} and using \eqref{eq.H.L2}-\eqref{eq.H.L3}, we obtain
\begin{align}\label{eq.H.mix1}
  \frac{1}{2}\frac{\dd}{\dd t} \widetilde{Y}(t)
  +\frac{\mu}{2}\Vert\Lambda^{s+\frac{\alpha}{2}} u \Vert^2_{L^2}
  +\tfrac{\delta_2 \sqrt{\gamma}}{4 C_2}\Vert\Lambda^{s+\frac{\alpha}{2}}\sigma\Vert_{L^2}^2
  \le C \sum_{j=1}^4 |\widetilde{I}_j| + C \sum_{j=1}^5 \vert\widetilde{J}_j\vert
  + \tfrac{\delta_2 \mu^2(1-\alpha)}{4s \sqrt{\gamma}}\Vert \Lambda^{\frac{\alpha}{2}}u\Vert_{L^2}^2.
\end{align}

Next we estimate the terms on the right-hand side of \eqref{eq.H.mix1}.
For every $s>\frac{N}{2}+1-\frac{\alpha}{2}$, we have
\begin{align}
  &\label{eq:embed1}\Vert \nabla f\Vert_{L^\infty}\le C \Vert \Lambda^{\frac{\alpha}{2}} f\Vert_{L^2}^{\frac{2s-2-N+\alpha}{2s'}}
  \Vert \Lambda^{s+\frac{\alpha}{2}} f\Vert_{L^2}^{\frac{2+N-\alpha}{2s}}\le C\Vert\Lambda^{\frac{\alpha}{2}}f\Vert_{H^s},\\
  &\label{eq:embed2}\Vert \nabla f\Vert_{L^\infty}\le C \Vert \Lambda^{1-\frac{\alpha}{2}} f\Vert_{L^2}^{\frac{2s-2-N+\alpha}{2(s-1+\alpha)}}
  \Vert \Lambda^{s+\frac{\alpha}{2}} f\Vert_{L^2}^{\frac{\alpha+N}{2(s-1+\alpha)}}
  \le C\Vert\Lambda^{1-\frac{\alpha}{2}}f\Vert_{H^{s+\alpha-1}}.
\end{align}
By virtue of Lemma \ref{lem:commutator} and inequalities \eqref{eq:embed1}-\eqref{eq:embed2}, we deduce that
\begin{equation*}
\begin{aligned}
  \vert \widetilde{I}_1 \vert
  & = \Big\vert \tfrac{1-\gamma}{2} \int_{\R^N} \Big( \Lambda^s u \cdot \Lambda^s(\sigma\nabla\sigma)
  +\Lambda^s (\sigma \Div u)\, \Lambda^s \sigma \Big) \dd x \Big\vert\\
  & = \Big\vert \tfrac{1-\gamma}{2}\int_{\R^N} \Big(\Lambda^s u\cdot(\Lambda^s(\sigma\nabla\sigma)-\sigma\Lambda^s\nabla\sigma)
  -\Lambda^s u\cdot\nabla\sigma\Lambda^s\sigma+\Lambda^s\sigma(\Lambda^s(\sigma\Div u)-\sigma\Lambda^s\Div u)\Big)\dd x \Big\vert\\
  &\leq C \Big(\Vert \Lambda^s u\Vert_{L^2}\Vert \Lambda^s\sigma\Vert_{L^2}\Vert\nabla\sigma\Vert_{L^\infty}
  +\Vert \Lambda^s\sigma\Vert_{L^2}^2\Vert \Div u\Vert_{L^\infty} \Big)\\
  &\leq C \Vert\sigma\Vert_{H^s} \big(\Vert\Lambda^{1-\frac{\alpha}{2}}\sigma\Vert_{H^{s+\alpha-1}}^2
  + \Vert \Lambda^{\frac{\alpha}{2}}u\Vert_{H^s}^2 \big),
\end{aligned}
\end{equation*}
and
\begin{equation*}
\begin{aligned}
  \vert \widetilde{I}_2\vert
  &= \Big\vert -\int_{\R^N} \big(\Lambda^{s}(u\cdot\nabla\sigma)-u\cdot\nabla\Lambda^{s}\sigma\big)\Lambda^{s}\sigma\dd x
  +\frac{1}{2}\int_{\R^N} \Div u(\Lambda^{s}\sigma)^2 \dd x \Big\vert \\
  &\le C\Vert\nabla\sigma\Vert_{L^\infty}\Vert\Lambda^{s}u\Vert_{L^2}\Vert \Lambda^{s}\sigma\Vert_{L^2}
  +C\Vert \nabla u\Vert_{L^\infty}\Vert\Lambda^{s}\sigma\Vert_{L^2}^2
  +\frac{1}{2}\Vert \Div u\Vert_{L^\infty}\Vert\Lambda^{s}\sigma\Vert_{L^2}^2\\
  &\le C\Vert\sigma\Vert_{H^s} \big(\Vert\Lambda^{1-\frac{\alpha}{2}}\sigma\Vert_{H^{s+\alpha-1}}^2
  + \Vert \Lambda^{\frac{\alpha}{2}}u\Vert_{H^s}^2 \big),
\end{aligned}
\end{equation*}
and
\begin{align*}
  \vert \widetilde{I}_3\vert
  &= \Big\vert  \int_{\R^N} \big(\Lambda^{s}(u\cdot\nabla u)-u\cdot\nabla \Lambda^{s}u\big)\cdot\Lambda^{s}u\dd x
  -\frac{1}{2}\int_{\R^N} \Div u(\Lambda^{s}u)^2\dd x \Big\vert \\
  &\le \Vert\Lambda^{s}(u\cdot\nabla u)-u\cdot\nabla \Lambda^{s}u\Vert_{L^2}\Vert\Lambda^{s}u\Vert_{L^2}+\frac{1}{2}\Vert \Div u\Vert_{L^\infty}\Vert\Lambda^{s}u\Vert_{L^2}^2\\
  &\le C\Vert\nabla u\Vert_{L^\infty}\Vert\Lambda^{s}u\Vert_{L^2}^2 .
\end{align*}
Due to that $H^s\hookrightarrow W^{\alpha,\infty}$ for $\alpha<\frac{2}{3}$, $s>\frac{N}{2}+1-\frac{\alpha}{2}$, $H^{s+\alpha-1}\hookrightarrow W^{3\alpha/2-1,\infty}$ for $\frac{2}{3}\le \alpha\le 1$, we get 
\begin{align}\label{es:Lam-aLinf}
  \|\Lambda^\alpha a\|_{L^\infty}  \leq
  C \|a\|_{H^s} + C \|\Lambda^{1-\frac{\alpha}{2}} a\|_{H^{s+\alpha -1}} 
  \leq C \|\sigma\|_{H^s} + C \|\Lambda^{1-\frac{\alpha}{2}}\sigma\|_{H^{s+\alpha-1}},
\end{align}
and according to Lemmas \ref{lem:Leibniz}, \ref{Lem:composite}, we have
\begin{equation*}
\begin{aligned}
  \vert \widetilde{I}_4 \vert
  &\le C \big(\Vert \Lambda^{s-\frac{\alpha}{2}}(u\Lambda^{\alpha}a)\Vert_{L^2}
  +\Vert\Lambda^{s+\frac{\alpha}{2}}(au)\Vert_{L^2}\big)\Vert \Lambda^{s+\frac{\alpha}{2}}u\Vert_{L^2}\\
  &\le C\big(\Vert \Lambda^{s-\frac{\alpha}{2}}u\Vert_{L^2}\Vert \Lambda^{\alpha}a\Vert_{L^\infty}
  +\Vert u\Vert_{L^\infty}\Vert \Lambda^{s+\frac{\alpha}{2}}a\Vert_{L^2}
  +\Vert a\Vert_{L^\infty}\Vert\Lambda^{s+\frac{\alpha}{2}}u\Vert_{L^2}\big)\Vert \Lambda^{s+\frac{\alpha}{2}}u\Vert_{L^2}\\
  &\le C(\Vert u\Vert_{H^s}+\Vert \sigma\Vert_{H^s})(\Vert \Lambda^{1-\frac{\alpha}{2}}\sigma\Vert_{H^{s+\alpha-1}}^2
  +\Vert \Lambda^{\frac{\alpha}{2}}u\Vert_{H^s}^2).
\end{aligned}
\end{equation*}
For $\widetilde{J}_1$ and $\widetilde{J}_2$ given by \eqref{eq.H.mix}, we use Lemma \ref{lem:Leibniz} and
\eqref{eq:embed1}-\eqref{eq:embed2} to find
\begin{equation*}
\begin{aligned}
  \vert\widetilde{J}_1 \vert
  &\le C\Vert \Lambda^{s+\frac{\alpha}{2}-1}(u\cdot\nabla\sigma)\Vert_{L^2}\Vert\Lambda^{s+\frac{\alpha}{2}-1}\Div u\Vert_{L^2}\\
  &\le C \big(\Vert \Lambda^{s+\frac{\alpha}{2}-1}u\Vert_{L^2}\Vert \nabla\sigma\Vert_{L^\infty}
  +\Vert u\Vert_{L^\infty}\Vert \Lambda^{s+\frac{\alpha}{2}}\sigma\Vert_{L^2} \big)
  \Vert\Lambda^{s+\frac{\alpha}{2}}u\Vert_{L^2} \\
  & \le C \Vert u\Vert_{H^s}(\Vert\Lambda^{1-\frac{\alpha}{2}}\sigma\Vert_{H^{s+\alpha-1}}^2
  + \Vert\Lambda^{s+\frac{\alpha}{2}}u\Vert_{L^2}^2),
\end{aligned}
\end{equation*}
and
\begin{equation*}
\begin{aligned}
  \vert\widetilde{J}_2\vert
  &\le C\Vert\Lambda^{s+\frac{\alpha}{2}-1}(\sigma \Div u)\Vert_{L^2}\Vert\Lambda^{s+\frac{\alpha}{2}-1}\Div u\Vert_{L^2}\\
  &\le C \big( \Vert\Lambda^{s+\frac{\alpha}{2}-1}\sigma \Vert_{L^2}\Vert \Div u\Vert_{L^\infty}
  +\Vert\sigma \Vert_{L^\infty}\Vert\Lambda^{s+\frac{\alpha}{2}}u\Vert_{L^2} \big)
  \Vert\Lambda^{s+\frac{\alpha}{2}}u\Vert_{L^2} \\
  & \le C\Vert\sigma \Vert_{H^{s}}\Vert\Lambda^{\frac{\alpha}{2}}u\Vert_{H^{s}}^2.
\end{aligned}
\end{equation*}
By virtue of Lemma \ref{lem:Leibniz} and the fact that $H^s\hookrightarrow L^\infty$ for every $s>\frac{N}{2}$,
we estimate $\widetilde{J}_3$ as follows:
\begin{align*}
  \vert\widetilde{J}_3\vert
  & \le C\Vert \Lambda^{s+\frac{\alpha}{2}}\sigma^2\Vert_{L^2}\Vert\Lambda^{s+\frac{\alpha}{2}}\sigma\Vert_{L^2} \\
  & \le C\Vert\sigma\Vert_{L^\infty}\Vert \Lambda^{s+\frac{\alpha}{2}}\sigma\Vert_{L^2}^2
  \le C\Vert\sigma \Vert_{H^{s}} \Vert\Lambda^{1-\frac{\alpha}{2}}\sigma\Vert_{H^{s+\alpha-1}}^2.
\end{align*}
Thanks to Lemmas \ref{lem:Leibniz}, \ref{Lem:composite} and inequalities \eqref{eq:embed1}, \eqref{es:Lam-aLinf}, we find
\begin{equation*}
\begin{aligned}
  \vert\widetilde{J}_4\vert
  &\le C\big(\Vert \Lambda^{s-\frac{\alpha}{2}}(u\Lambda^{\alpha}a)\Vert_{L^2}
  +\Vert\Lambda^{s+\frac{\alpha}{2}}(a u)\Vert_{L^2}\big)\Vert \Lambda^{s+\frac{3\alpha}{2}-1}\sigma\Vert_{L^2}\\
  &\le C \big(\Vert \Lambda^{s-\frac{\alpha}{2}}u\Vert_{L^2}\Vert\Lambda^{\alpha}a\Vert_{L^\infty}
  +\Vert u\Vert_{L^\infty}\Vert\Lambda^{s+\frac{\alpha}{2}}a\Vert_{L^2}+\Vert a\Vert_{L^\infty}
  \Vert\Lambda^{s+\frac{\alpha}{2}}u\Vert_{L^2}\big)\Vert \Lambda^{s+\frac{3\alpha}{2}-1}\sigma\Vert_{L^2}\\
  &\le C\big(\Vert u\Vert_{H^s}+\Vert \sigma\Vert_{H^s}\big)
  \big(\Vert \Lambda^{1-\frac{\alpha}{2}}\sigma\Vert_{H^{s+\alpha-1}}^2
  +\Vert \Lambda^{\frac{\alpha}{2}}u\Vert_{H^s}^2\big),
\end{aligned}
\end{equation*}
and
\begin{equation*}
\begin{aligned}
  \vert\widetilde{J}_5\vert
  &\le C\Vert \Lambda^{s+\frac{\alpha}{2}-1}(u\cdot\nabla u)\Vert_{L^2}\Vert \Lambda^{s+\frac{\alpha}{2}}\sigma\Vert_{L^2}\\
  & \le C \big(\Vert \Lambda^{s+\frac{\alpha}{2}-1}u\Vert_{L^2}\Vert \nabla u\Vert_{L^\infty}+\Vert u\Vert_{L^\infty}
  \Vert \Lambda^{s+\frac{\alpha}{2}}u\Vert_{L^2} \big)
  \Vert \Lambda^{s+\frac{\alpha}{2}}\sigma\Vert_{L^2} \\
  &\le C\Vert u\Vert_{H^s} \big(\Vert \Lambda^{1-\frac{\alpha}{2}}\sigma\Vert_{H^{s+\alpha-1}}^2
  +\Vert \Lambda^{\frac{\alpha}{2}}u\Vert_{H^s}^2 \big).
\end{aligned}
\end{equation*}


Inserting the above estimates on $\widetilde{I}_1$-$\widetilde{I}_4$ and $\widetilde{J}_1$-$\widetilde{J}_5$
into \eqref{eq.H.mix1}, we obtain
\begin{equation}\label{eq.H.mix2}
\begin{aligned}
\frac{1}{2}\frac{\dd}{\dd t}\widetilde{Y}(t)
  &+\frac{\mu}{2}\Vert\Lambda^{s+\frac{\alpha}{2}} u(t)\Vert^2_{L^2}
  + \tfrac{\delta_2 \sqrt{\gamma}}{4 C_2} \Vert\Lambda^{s+\frac{\alpha}{2}}\sigma(t) \Vert_{L^2}^2 \\
  &\le C\Vert(\sigma,u)\Vert_{H^s} \big(\Vert \Lambda^{\frac{\alpha}{2}}u\Vert_{H^s}^2
  +\Vert \Lambda^{1-\frac{\alpha}{2}}\sigma\Vert_{H^{s+\alpha-1}}^2 \big)
  +\tfrac{\delta_2 \mu^2(1-\alpha)}{4s \sqrt{\gamma}} \Vert\Lambda^{\frac{\alpha}{2}}u\Vert_{L^2}^2.
\end{aligned}
\end{equation}
By integrating the inequality \eqref{eq.H.mix2} with respect to $t$-variable
and using \eqref{eq.H.Ysim}, we infer that
\begin{align}\label{eq.H.mix3}
  & \Vert (\Lambda^{s}\sigma,\Lambda^{s} u)\Vert^{2}_{L^2}
  + \mu \int_0^T \Big(\Vert\Lambda^{s+\frac{\alpha}{2}} u\Vert^2_{L^2}
  + \tfrac{\delta_2 \sqrt{\gamma}}{C_2} \Vert\Lambda^{s+\frac{\alpha}{2}}\sigma\Vert_{L^2}^2\Big)\dd t
  \le C\Vert (\sigma_0,u_0)\Vert^{2}_{H^s}
  + \tfrac{\delta_2 (1-\alpha)}{s}\Vert u\Vert_{L^\infty_T(L^2)}^2 \nonumber  \\
  &\hspace{.5cm}+ \tfrac{\delta_2 \mu^2(1-\alpha)}{s \sqrt{\gamma}}
  \int_0^T\Vert\Lambda^{\frac{\alpha}{2}}u\Vert_{L^2}^2\dd t
  +C\Vert(\sigma,u)\Vert_{L^\infty_T(H^s)} \int_0^T \big( \Vert \Lambda^{\frac{\alpha}{2}}u\Vert_{H^s}^2
  + \Vert \Lambda^{1-\frac{\alpha}{2}}\sigma\Vert_{H^{s+\alpha-1}}^2 \big) \dd t,
\end{align}
which corresponds to the $\dot H^s$-estimate \eqref{eq.H.mix3-0}, as desired.
\vskip1mm

\noindent \textbf{Step 3}: $H^s$-estimate. By combining \eqref{eq.H.mix3-0} with \eqref{eq.D.Lem1},
and letting $\delta_1,\delta_2>0$ be fixed constants small enough,
 there exists a constant $C = C(\alpha,s,\mu,\gamma,N)>0$ such that for $s>\frac{N}{2}+1 -\frac{\alpha}{2}$,
\begin{equation}\label{eq.Lem2}
\begin{aligned}
  &\Vert (\sigma,u)\Vert^{2}_{L^\infty_T(H^s)}
  +\int_0^T\left(\Vert\Lambda^{\frac{\alpha}{2}} u\Vert^{2}_{H^s}
  +\Vert\Lambda^{1-\frac{\alpha}{2}}\sigma\Vert_{H^{s+\alpha-1}}^2\right)\dd t\\
  &\hspace{2cm}\le C\Vert (\sigma_0,u_0)\Vert^{2}_{H^s}+C\Vert (\sigma,u)\Vert_{L^\infty_T(H^s)}
  \int_0^T\left(\Vert \Lambda^{\frac{\alpha}{2}}u\Vert_{H^s}^2
  +\Vert \Lambda^{1-\frac{\alpha}{2}}\sigma\Vert_{H^{s+\alpha-1}}^2\right)\dd t.
\end{aligned}
\end{equation}
Denote by $X_0 \triangleq \Vert (\sigma_0,u_0)\Vert_{H^s}^2$ and
\begin{align*}
  X(T)\triangleq \Vert (\sigma,u)\Vert^{2}_{L^\infty_T(H^s)}
  +\int_0^T \Big(\Vert\Lambda^{\frac{\alpha}{2}} u\Vert^{2}_{H^s}
  +\Vert\Lambda^{1-\frac{\alpha}{2}}\sigma\Vert_{H^{s+\alpha-1}}^2\Big) \dd t.
\end{align*}
From \eqref{eq.Lem2}, we have
\begin{align*}
  X(T)\le C X_0 + C X^{\frac 3 2}(T).
\end{align*}
Through the continuity argument, we infer that if $X_0 \leq \varepsilon $ with a sufficiently small constant $\varepsilon >0$,
then $X(T)\le 2 C X_0$ for any $T>0$.
\end{proof}

%
%

%
%

\subsection{Proof of Theorem \ref{thm:global}}
The local well-posedness result for the system \eqref{eq.EAS.a} can be established by using the standard arguments, e.g. see \cite{chen2021global,karper2015hydrodynamic, majda2012compressible}.
We present a version of local well-posedness result from \cite[Theorem 2.1]{chen2021global} as follows.
\begin{proposition}\label{prop:local}
Let $s>\frac{N}{2}+1$.
Assume that $(a_0,u_0)\in H^s(\R^N)$ and $\inf\limits_{x\in\R^N}a(x)>-1$.
Then there exist a unique solution $(a,u)$ of the system \eqref{eq.EAS.a} such that
\begin{align}
  (a,u)\in C([0,T],H^s(\R^N)),\quad
  u\in L^2([0,T],H^{s+\frac{\alpha}{2}}(\R^N)),
\end{align}
with some $T>0$.
Moreover, let $T^*$ be the maximal existence time of this solution,
and if $T^*<+\infty$, then there holds:
\begin{align}
  \int_0^{T^*} \Big(\Vert \nabla u(t)\Vert_{L^\infty}+\Vert a(t)\Vert_{W^{1,\infty}}\Big)\dd t=\infty.
\end{align}
\end{proposition}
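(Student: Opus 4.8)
The plan is to establish local well-posedness by a standard iteration scheme, exploiting the regularizing $\alpha$-order fractional dissipation that $u$ inherits from the alignment term. First I would set up an approximate sequence $(a^{(n)}, u^{(n)})$ by solving, at each step, a \emph{linear} problem: given $(a^{(n)}, u^{(n)})$, define $(a^{(n+1)}, u^{(n+1)})$ as the solution of the linear transport--fractional-diffusion system obtained by freezing the nonlinear coefficients,
\begin{equation*}
\begin{aligned}
  &\partial_t a^{(n+1)} + \Div u^{(n+1)} = -\Div(a^{(n)} u^{(n)}),\\
  &\partial_t u^{(n+1)} + \mu\Lambda^\alpha u^{(n+1)} + \gamma\nabla a^{(n+1)}
   = \mu\big(u^{(n)}\Lambda^\alpha a^{(n)} - \Lambda^\alpha(a^{(n)}u^{(n)})\big)
   - u^{(n)}\cdot\nabla u^{(n)} - \gamma(\rho^{(n)\,\gamma-2}-1)\nabla a^{(n)},
\end{aligned}
\end{equation*}
with initial data $(a_0,u_0)$, where $\rho^{(n)} = 1 + a^{(n)}$. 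The condition $\inf_x a_0(x) > -1$ guarantees $\rho^{(n)} > 0$ on a short time interval (propagated by the transport structure), so the composition $(\rho^{(n)})^{\gamma-2}-1$ is well-defined and controlled via Lemma \ref{Lem:composite}.

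The core of the argument is a uniform-in-$n$ a priori bound on a short time interval $[0,T]$ with $T$ depending only on $\|(a_0,u_0)\|_{H^s}$. I would run an energy estimate in $H^s$: apply $\Lambda^s$ to both equations, pair with $\Lambda^s a^{(n+1)}$ and $\Lambda^s u^{(n+1)}$ respectively, and sum; the $\gamma\nabla a$--$\Div u$ coupling cancels after integration by parts, leaving $\mu\|\Lambda^{s+\alpha/2}u^{(n+1)}\|_{L^2}^2$ as a good term on the left. The right-hand side is controlled using the product and commutator estimates in Lemmas \ref{lem:Leibniz}--\ref{lem:commutator}, the composition estimate Lemma \ref{Lem:composite}, and the embedding $H^s \hookrightarrow W^{1,\infty}$ valid for $s > \frac{N}{2}+1$; crucially, every term involving $\Lambda^\alpha$-derivatives of $u^{(n+1)}$ of top order can be absorbed into the dissipation term via Young's inequality, since the alignment commutator $\Lambda^{s-\alpha/2}(u\Lambda^\alpha a) + \Lambda^{s+\alpha/2}(au)$ only costs $s+\alpha/2$ derivatives — matched exactly by the dissipation. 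This yields a differential inequality $\frac{\dd}{\dd t}\mathcal{E}^{(n+1)} \le C\,P(\mathcal{E}^{(n)})\,(1 + \mathcal{E}^{(n+1)})$ for a polynomial $P$, which, together with a lower bound on $\inf_x \rho^{(n)}$, closes on a uniform ball in $C([0,T];H^s) \cap L^2([0,T];H^{s+\alpha/2})$.

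Next I would show the sequence is Cauchy in the \emph{lower} norm $C([0,T];H^{s-1}) \cap L^2([0,T];H^{s-1+\alpha/2})$ by differencing consecutive iterates and running the same energy estimate one derivative down; the loss of one derivative in the nonlinear difference terms is compensated by the uniform $H^s$ bound. Interpolation with the uniform $H^s$ bound then upgrades convergence, giving a limit $(a,u) \in C([0,T];H^s) \cap L^2([0,T];H^{s+\alpha/2})$ solving \eqref{eq.EAS.a}; uniqueness follows from the same difference estimate. Finally, the blow-up criterion: if $T^* < \infty$ but $\int_0^{T^*}(\|\nabla u\|_{L^\infty} + \|a\|_{W^{1,\infty}})\,\dd t < \infty$, then the $H^s$ energy inequality — now written as $\frac{\dd}{\dd t}\|(a,u)\|_{H^s}^2 \lesssim (\|\nabla u\|_{L^\infty} + \|a\|_{W^{1,\infty}})\|(a,u)\|_{H^s}^2$, where the dissipation again absorbs all top-order $u$ contributions and the Lipschitz norms control the transport and pressure terms — combined with Gr\"onwall keeps $\|(a,u)(t)\|_{H^s}$ bounded up to $T^*$, and a lower bound on $\inf_x \rho$ persists, contradicting maximality by the local existence statement restarted at a time near $T^*$.

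\emph{Main obstacle.} The delicate point is bookkeeping the derivative count in the alignment commutator terms so that the top-order contributions land \emph{exactly} at the dissipation threshold $H^{s+\alpha/2}$ and can be absorbed; this requires the sharp commutator bounds of Lemma \ref{lem:commutator} (splitting cases $\alpha < 1$ and $\alpha = 1$ as in the a priori estimates) rather than crude Leibniz bounds, and care that the coefficient of the absorbed term is genuinely small. A secondary technical nuisance is maintaining the positivity $\rho^{(n)} > 0$ uniformly along the iteration so that the composition estimates for $(\rho^{(n)})^{\gamma-2}-1$ stay uniform — handled by the transport structure of the $a$-equation and the uniform $W^{1,\infty}$ control coming from $H^s \hookrightarrow W^{1,\infty}$.
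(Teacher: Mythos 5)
First, note that the paper does not actually prove Proposition \ref{prop:local}: it is quoted from \cite[Theorem 2.1]{chen2021global} and the standard quasilinear theory (Majda), so your proposal is being compared with the classical symmetrizable-hyperbolic argument rather than with an in-paper proof.

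Your outline has a genuine gap at its central step: the unsymmetrized $H^s$ energy estimate for \eqref{eq.EAS.a} that you describe does not close. Pairing $\Lambda^s$ of the two equations with $\Lambda^s a$ and $\Lambda^s u$ only cancels the \emph{linear} coupling $\gamma\nabla a$ versus $\Div u$; the quasilinear pieces $a\,\Div u$ in the continuity equation and $\gamma(\rho^{\gamma-2}-1)\nabla a$ in the momentum equation leave, after commutators and integration by parts, a top-order remainder of the form $\gamma\int_{\R^N}(\rho^{\gamma-2}-\rho)\,\Lambda^s\Div u\,\Lambda^s a\,\dd x$, which involves $s+1$ derivatives of $u$ (or of $a$) and cancels only in the special case $\gamma=3$. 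Since the dissipation controls merely $\Lambda^{s+\frac{\alpha}{2}}u$ with $\alpha\le 1$, and there is no smallness in the local theory, this term cannot be absorbed; this is exactly the difficulty the paper flags for $\gamma(\rho^{\gamma-2}-1)\nabla a$ and resolves by the symmetrizing variable $\sigma$ in \eqref{sigma} (equivalently, a $\rho$-dependent symmetrizer \`a la Majda/Sideris--Thomases--Wang). The same loss of structure appears in your handling of the alignment term: estimating $\Lambda^{s-\frac{\alpha}{2}}(u\Lambda^\alpha a)$ and $\Lambda^{s+\frac{\alpha}{2}}(au)$ separately produces $\Vert u\Vert_{L^\infty}\Vert\Lambda^{s+\frac{\alpha}{2}}a\Vert_{L^2}$, i.e.\ $s+\frac{\alpha}{2}$ derivatives on the \emph{undissipated} unknown $a$ (in the paper's global argument this is admissible only because the cross-term gives dissipation on $\sigma$); locally you must keep the commutator structure so that the two occurrences of $u\Lambda^{s+\frac{\alpha}{2}}a$ cancel, leaving $[\Lambda^{s+\frac{\alpha}{2}},u]a-[\Lambda^{s-\frac{\alpha}{2}},u]\Lambda^\alpha a$, which \eqref{comm21} controls by $\Vert a\Vert_{L^\infty}\Vert\Lambda^{s+\frac{\alpha}{2}}u\Vert_{L^2}$ plus lower order. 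Finally, your iteration scheme freezes \emph{all} nonlinearities at step $n$ on the right-hand side, so already the transport terms (e.g.\ $\Div(a^{(n)}u^{(n)})$, $u^{(n)}\cdot\nabla u^{(n)}$, $(\rho^{(n)\,\gamma-2}-1)\nabla a^{(n)}$) require $H^{s+1}$ of the previous iterate and the claimed bound $\frac{\dd}{\dd t}\mathcal{E}^{(n+1)}\le C P(\mathcal{E}^{(n)})(1+\mathcal{E}^{(n+1)})$ is not obtainable; the standard scheme keeps the top-order derivatives on the new iterate (coefficients frozen), precisely so that transport and symmetrized-energy estimates apply. With the symmetrization (work with $(\sigma,u)$ or a weighted energy) and a quasilinear linearization, the remainder of your plan (uniform bounds, contraction one derivative down, and the continuation criterion via Gr\"onwall in $\Vert\nabla u\Vert_{L^\infty}+\Vert a\Vert_{W^{1,\infty}}$ together with the transported lower bound on $\rho$) is the standard route.
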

Under the smallness assumption \eqref{eq:cond0}, or equivalently \eqref{eq.cond0.au},  Proposition \ref{Prop:priori.au} ensures that $(a,u)$ satisfies the $H^s$-estimate \eqref{eq.priori.au}. In particular, we get
\[\|(a, u)\|_{L^\infty_T (H^s)}\leq\|(a_0, u_0)\|_{H^s}<\varepsilon,\]
for every $T<+\infty$. We then apply Sobolev embedding and obtain
\[
\int_0^T \Big(\Vert \nabla u(t)\Vert_{L^\infty}+\Vert a(t)\Vert_{W^{1,\infty}}\Big)\dd t
  \leq C T \|(a, u)\|_{L^\infty_T (H^s)} <CT\varepsilon < +\infty,
\]
with $s>\frac{N}{2}+1$.
In view of Proposition \ref{prop:local}, we must have $T^*=+\infty$, namely the solution exists globally in time. This concludes the proof of Theorem \ref{thm:global}.

\section{Asymptotic behavior}\label{decay}
In this section, we investigate the asymptotic behavior of the solution $(a,u)$ obtained in Theorem \ref{thm:global}. We demonstrate that the solution converges to a steady state:
\[a(t)\to0,\quad\text{and}\quad u(t)\to0,\quad\text{as}\,\,t\to\infty.\]
The density becomes constant due to the effect of pressure, while the velocity converges to a constant due to alignment interactions.

We establish quantitative estimates for this convergence, including explicit decay rates as outlined in Theorem \ref{thm:decay}.

We begin by investigating a linearized system of $(a,v)$ with $v= \Lambda^{-1}\mathrm{div}, u$, focusing on the decay properties of the Green's matrix. 
Subsequently, we establish decay estimates for the solution $(a,u)$ of the system \eqref{eq.EAS.a}. Please refer to Propositions \ref{Prop:upper.rough} and \ref{Prop:upper.refined} for the proof of the estimates in Theorem \ref{thm:decay} (1)-(3).
Finally, to verify the optimality of the decay rate, we derive a lower bound of $\|(a,u)(t)\|_{L^2}$, ensuring that the decay rate precisely coincides with the upper bound. See Proposition \ref{Prop:lower} for the proof of the lower bound estimates in Theorem  \ref{thm:decay} (4).

\subsection{Green's matrix for the linearized system}

By a direct computation (see the appendix for more details), we infer that the solution of the ordinary differential equations \eqref{eq.lineareq4au1} is
\begin{equation}\label{eq.CP}
  \begin{pmatrix}\widehat{a}\\\widehat{v}\end{pmatrix}
  =\widehat{\mathcal{G}}(t,\xi)\begin{pmatrix}\widehat{a_0}\\
  \widehat{v_0}\end{pmatrix}+\int_0^t\widehat{\mathcal{G}}(t-s,\xi)
  \begin{pmatrix}
  \widehat{F}(s) \\
  \widehat{G}(s)
  \end{pmatrix}\dd s,
\end{equation}
where for $\vert\xi\vert^{1-\alpha}\neq{\mu}/({2\sqrt\gamma})$,
\begin{equation}\label{eq:Green1}
  \widehat{\mathcal{G}}(t,\xi) \triangleq
  \begin{pmatrix}
  \widehat{\mathcal{G}}_{11}&\widehat{\mathcal{G}}_{12}\\
  \widehat{\mathcal{G}}_{21}&\widehat{\mathcal{G}}_{22}
  \end{pmatrix}
  = \frac{1}{\lambda_+ - \lambda_{-}}
  \begin{pmatrix}
  \lambda_{+}e^{t\lambda_{-}}-\lambda_{-}e^{t\lambda_{+}}
  & \vert \xi\vert(e^{t\lambda_{-}}-e^{t\lambda_{+}}) \\
  \gamma\vert\xi\vert(e^{t\lambda_{+}}-e^{t\lambda_{-}})
  & \lambda_{+}e^{t\lambda_{+}}-\lambda_{-}e^{t\lambda_{-}}
  \end{pmatrix},
\end{equation}
and for $\vert\xi\vert^{1-\alpha}={\mu}/(2\sqrt\gamma)$,
\begin{equation}\label{eq:Green2}
  \widehat{\mathcal{G}}(t,\xi) \triangleq
  \begin{pmatrix}
  \widehat{\mathcal{G}}_{11}&\widehat{\mathcal{G}}_{12}\\
  \widehat{\mathcal{G}}_{21}&\widehat{\mathcal{G}}_{22}
  \end{pmatrix}
  =e^{-\sqrt\gamma\vert\xi\vert t}
  \begin{pmatrix}
  1+\sqrt\gamma\vert\xi\vert t
  & -\vert\xi\vert t\\
  \gamma\vert\xi\vert t
  &1-\sqrt\gamma\vert\xi\vert t
  \end{pmatrix},
\end{equation}
with $\lambda_\pm$ given by \eqref{def:lamb-pm}.

Now we show the pointwise estimates for the Green matrix $\widehat{\mathcal{G}}(t,\xi)$.
\begin{lemma}\label{lem.point.G.low-high}
There exists constant $C>0$ depending only on $\alpha,N,\mu,\gamma$ such that the following statements hold true.
\begin{enumerate}[(1)]
\item For  $0<\alpha<1$ and the low frequencies $\vert\xi\vert^{1-\alpha}<\mu/(2\sqrt\gamma)$, we have
\begin{align}
  &\vert \widehat{\mathcal{G}}_{11}(t,\xi)\vert \le Ce^{- C t\vert\xi\vert^{2-\alpha}},\quad
  \vert \widehat{\mathcal{G}}_{21}(t,\xi)\vert = \gamma\vert \widehat{\mathcal{G}}_{12}(t, \xi)\vert
  \le C\vert\xi\vert^{1-\alpha}e^{-Ct\vert\xi\vert^{2-\alpha}}, \\
  &\vert \widehat{\mathcal{G}}_{22}(t,\xi) \vert \le C \big(e^{-Ct\vert\xi\vert^\alpha} +
  \vert\xi\vert^{2-2\alpha}e^{-Ct\vert\xi\vert^{2-\alpha}}\big). \label{es:point-G22}
\end{align}
\item For  $0<\alpha<1$ and the high frequencies $\vert\xi\vert^{1-\alpha}\ge\mu/(2\sqrt\gamma)$, we have that for every $i,j=1,2$,
\begin{align}\label{es:point-Gij}
  \vert \widehat{\mathcal{G}}_{ij}(t,\xi)\vert 
  \le Ce^{-Ct} .
\end{align}
\item For $\alpha=1$, we have that for every $i,j=1,2$,
\begin{align}\label{eq:point-Gij2}
  &\vert \widehat{\mathcal{G}}_{ij}(t,\xi)\vert \le Ce^{-C\vert\xi\vert t}.
\end{align}
\end{enumerate}
\end{lemma}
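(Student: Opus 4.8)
The plan is to establish the three cases by direct analysis of the explicit formulas \eqref{eq:Green1}--\eqref{eq:Green2}, carefully tracking the behavior of $\lambda_\pm$ from \eqref{def:lamb-pm} in each frequency regime. Throughout, the key algebraic identity is $\lambda_+ + \lambda_- = -\mu|\xi|^\alpha$ and $\lambda_+ \lambda_- = \gamma|\xi|^2$, and one should write $\lambda_+ - \lambda_- = -\sqrt{\mu^2|\xi|^{2\alpha} - 4\gamma|\xi|^2}$ (with appropriate sign/branch) so that every entry of $\widehat{\mathcal{G}}$ becomes an elementary expression in $\lambda_\pm$ and $|\xi|$.

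\textbf{Case (1): low frequencies with $0<\alpha<1$.} Here $|\xi|^{1-\alpha} < \mu/(2\sqrt\gamma)$, so both eigenvalues are real and negative. From \eqref{eq:lamb-pm-low} we have $\lambda_+ \sim -\mu|\xi|^\alpha$ and $\lambda_- \sim -\frac{\gamma}{\mu}|\xi|^{2-\alpha}$, and moreover $\lambda_+ - \lambda_- \sim -\mu|\xi|^\alpha$ (dominated by $\lambda_+$). I would substitute these into the four entries of \eqref{eq:Green1}. For $\widehat{\mathcal{G}}_{11} = \frac{\lambda_+ e^{t\lambda_-} - \lambda_- e^{t\lambda_+}}{\lambda_+ - \lambda_-}$, the leading term $\frac{\lambda_+}{\lambda_+ - \lambda_-} e^{t\lambda_-}$ is bounded by $C e^{-Ct|\xi|^{2-\alpha}}$, while the second term has the extra small prefactor $\frac{\lambda_-}{\lambda_+ - \lambda_-} \sim |\xi|^{2-2\alpha}$ and the faster decay $e^{t\lambda_+}$, which is even better than $e^{-Ct|\xi|^{2-\alpha}}$; so $|\widehat{\mathcal{G}}_{11}| \le C e^{-Ct|\xi|^{2-\alpha}}$. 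For $\widehat{\mathcal{G}}_{12} = \frac{|\xi|(e^{t\lambda_-} - e^{t\lambda_+})}{\lambda_+ - \lambda_-}$, the prefactor is $\frac{|\xi|}{\lambda_+ - \lambda_-} \sim |\xi|^{1-\alpha}$ and the difference of exponentials is dominated by $e^{t\lambda_-}$, giving $|\widehat{\mathcal{G}}_{12}| \le C|\xi|^{1-\alpha} e^{-Ct|\xi|^{2-\alpha}}$; the identity $\widehat{\mathcal{G}}_{21} = \gamma \widehat{\mathcal{G}}_{12}$ is immediate from \eqref{eq:Green1}. For $\widehat{\mathcal{G}}_{22} = \frac{\lambda_+ e^{t\lambda_+} - \lambda_- e^{t\lambda_-}}{\lambda_+ - \lambda_-}$, the first term is $\frac{\lambda_+}{\lambda_+ - \lambda_-} e^{t\lambda_+} \sim e^{t\lambda_+}$, bounded by $C e^{-Ct|\xi|^\alpha}$, and the second term carries the prefactor $\frac{\lambda_-}{\lambda_+ - \lambda_-} \sim |\xi|^{2-2\alpha}$ with decay $e^{t\lambda_-}$, giving exactly the two-term bound \eqref{es:point-G22}.

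\textbf{Cases (2) and (3).} For case (2) (high frequencies, $0<\alpha<1$), by \eqref{eq:lamb-pm-high} the eigenvalues are complex conjugates with $\operatorname{Re}\lambda_\pm = -\frac{\mu}{2}|\xi|^\alpha$, and since $|\xi|^{1-\alpha}\ge \mu/(2\sqrt\gamma)$ means $|\xi|^\alpha \ge (\mu/(2\sqrt\gamma))^{\alpha/(1-\alpha)} =: c_0 > 0$, this gives $|e^{t\lambda_\pm}| \le e^{-\frac{\mu}{2} c_0 t} = e^{-Ct}$. One must also check that the prefactors $\frac{\lambda_+}{\lambda_+-\lambda_-}$, $\frac{|\xi|}{\lambda_+-\lambda_-}$, $\frac{\gamma|\xi|}{\lambda_+-\lambda_-}$ are uniformly bounded on this regime: since $\lambda_+ - \lambda_- = i\mu|\xi|^\alpha\sqrt{\tfrac{4\gamma}{\mu^2}|\xi|^{2-2\alpha}-1}$ and $|\xi|^{2-2\alpha} \ge (\mu/(2\sqrt\gamma))^2$, the quantity under the root is bounded below... but one must be careful that near the transition $|\xi|^{1-\alpha} = \mu/(2\sqrt\gamma)$ the root degenerates to zero, so these ratios are \emph{not} bounded there — this is precisely why a separate formula \eqref{eq:Green2} exists. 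The clean way around this is: for $|\xi|$ bounded away from the transition value, use the ratios directly; in a neighborhood of the transition, use continuity of $\widehat{\mathcal{G}}$ in $\xi$ together with the limiting formula \eqref{eq:Green2}, which at the transition point gives entries of the form (polynomial in $|\xi|t$)$\times e^{-\sqrt\gamma|\xi|t}$ — and since $|\xi|$ is bounded below there, $|\xi|t \, e^{-c|\xi|t} \le C e^{-ct/2}$, yielding \eqref{es:point-Gij}. For case (3) ($\alpha=1$), \eqref{def:lamb-pm} becomes $\lambda_\pm = \frac{|\xi|}{2}(-\mu \mp \sqrt{\mu^2 - 4\gamma})$, a fixed complex number (or real, when $\mu^2 \ge 4\gamma$) times $|\xi|$, with $\operatorname{Re}\lambda_\pm \le -c|\xi|$ for a fixed $c > 0$ regardless of the sign of $\mu^2 - 4\gamma$; the prefactors $\frac{\lambda_\pm}{\lambda_+-\lambda_-}$ and $\frac{|\xi|}{\lambda_+-\lambda_-}$ are $|\xi|$-independent constants (again using \eqref{eq:Green2} at the single degenerate case $\mu^2 = 4\gamma$, where the polynomial factor $|\xi|t$ is absorbed into $e^{-c|\xi|t}$ by $xe^{-x}\le C$), so every entry is bounded by $C e^{-c|\xi|t}$, which is \eqref{eq:point-Gij2}.

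\textbf{Main obstacle.} The routine part is substituting asymptotics of $\lambda_\pm$ into the matrix entries; the delicate point is the \emph{uniformity} of the bounds near the transition frequency $|\xi|^{1-\alpha} = \mu/(2\sqrt\gamma)$, where $\lambda_+ - \lambda_- \to 0$ and the formula \eqref{eq:Green1} becomes singular even though $\widehat{\mathcal{G}}$ itself stays smooth (matching \eqref{eq:Green2}). I would handle this either by the continuity/matching argument sketched above, or more systematically by the integral representation $\widehat{\mathcal{G}}(t,\xi) = \frac{1}{2\pi i}\oint e^{tz}(z - A)^{-1}\,dz$ around a contour enclosing both eigenvalues at distance $\sim |\xi|^\alpha$ (in case 2/3) or using a contour adapted to the real spectral gap (in case 1), which produces the stated bounds without ever dividing by $\lambda_+ - \lambda_-$. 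Also worth a brief remark: in case (1) one should double-check the constant $C$ in the exponent $e^{-Ct|\xi|^{2-\alpha}}$ is uniform as $|\xi|\to 0$ and as $|\xi|^{1-\alpha}\to \mu/(2\sqrt\gamma)^-$, using $|\lambda_-| = \frac{\gamma|\xi|^2}{|\lambda_+|}$ and the two-sided bound $\mu|\xi|^\alpha/2 \le |\lambda_+| \le \mu|\xi|^\alpha$ valid on the whole low-frequency regime.
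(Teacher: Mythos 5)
Your overall route---direct estimation of the explicit entries in \eqref{eq:Green1}, using the eigenvalue asymptotics \eqref{eq:lamb-pm-low}--\eqref{eq:lamb-pm-high} and a separate discussion of the transition frequency---is essentially the paper's. The genuine gap is in how you handle the degeneracy of $\lambda_+-\lambda_-$, and it affects case (1) as written, not only case (2). On the low-frequency side one has $\lambda_+-\lambda_-=-\mu|\xi|^\alpha\sqrt{1-\tfrac{4\gamma}{\mu^2}|\xi|^{2-2\alpha}}\to 0$ as $|\xi|^{1-\alpha}\to\mu/(2\sqrt\gamma)^-$, so your prefactor claims $\lambda_+-\lambda_-\sim-\mu|\xi|^\alpha$, $\tfrac{\lambda_+}{\lambda_+-\lambda_-}\lesssim 1$, $\tfrac{|\xi|}{\lambda_+-\lambda_-}\lesssim|\xi|^{1-\alpha}$, $\tfrac{\lambda_-}{\lambda_+-\lambda_-}\lesssim|\xi|^{2-2\alpha}$ are not uniform on $\{|\xi|^{1-\alpha}<\mu/(2\sqrt\gamma)\}$; your closing remark on case (1) only addresses the constant in the exponent, not this blow-up. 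Moreover, the patch you propose at the transition, ``continuity of $\widehat{\mathcal{G}}$ in $\xi$ matched with \eqref{eq:Green2}'', is not quantitative: pointwise continuity at the transition sphere for each fixed $t$ does not produce a bound $Ce^{-Ct}$ with constants uniform in $t$ on a whole neighborhood. Your resolvent-contour alternative can be made rigorous (a contour at distance $\sim|\xi|^\alpha$ enclosing \emph{both} eigenvalues and lying in $\{\mathrm{Re}\, z\le -c|\xi|^\alpha\}$ works near the transition, since there $|\xi|^{2-\alpha}\lesssim|\xi|^\alpha$), but ``a contour adapted to the real spectral gap'' in case (1) degenerates exactly where you need it.

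The paper closes this with one elementary identity instead: the divided-difference representation $\frac{e^{t\lambda_+}-e^{t\lambda_-}}{\lambda_+-\lambda_-}=t\int_0^1 e^{t(\theta\lambda_++(1-\theta)\lambda_-)}\,\dd\theta$ (see \eqref{eq.green21}), combined with the one-sided bounds $\lambda_+\le-\tfrac{\mu}{2}|\xi|^\alpha$ and $\lambda_-\le-\tfrac{\gamma}{\mu}|\xi|^{2-\alpha}$, which hold uniformly on the whole low-frequency region. Evaluating the $\theta$-integral gives $\big|\frac{e^{t\lambda_+}-e^{t\lambda_-}}{\lambda_+-\lambda_-}\big|\le\tfrac{4}{\mu}|\xi|^{-\alpha}e^{-\frac{\gamma}{\mu}t|\xi|^{2-\alpha}}$ up to the boundary, and the diagonal entries are then controlled through the rewriting \eqref{eq.green11}; the same identity disposes of the near-transition band $\mu/(2\sqrt\gamma)<|\xi|^{1-\alpha}\le\mu/\sqrt\gamma$, where it yields $te^{-\frac{\mu}{2}t|\xi|^\alpha}$, the exact transition is read off \eqref{eq:Green2}, and the far regime $|\xi|^{1-\alpha}>\mu/\sqrt\gamma$ is handled by your ratio estimates since there $|\lambda_+-\lambda_-|\ge\sqrt{3\gamma}|\xi|$. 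If you replace your ratio bounds near the transition (on both sides) by this divided-difference device, or carry out the both-eigenvalue contour estimate, your argument closes; as literally written it does not.
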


\begin{proof}[Proof of Lemma \ref{lem.point.G.low-high}]
(1) We consider the low-frequency case. By virtue of Taylor's formula,
\begin{align}\label{eq.green21}
  \frac{e^{t\lambda_{+}}-e^{t\lambda_{-}}}{\lambda_{+}-\lambda_{-}}
  =t\int_0^1e^{t(\theta\lambda_{+}+(1-\theta)\lambda_{-})}\dd \theta.
\end{align}
Noticing that $\lambda_+ \le -\frac{\mu}{2}\vert\xi\vert^{\alpha}$ and $\lambda_-\le -\frac{\gamma}{\mu}\vert\xi\vert^{2-\alpha}$
(recalling $\lambda_\pm$ satisfy \eqref{eq:lamb-pm-low}),
we can estimate the above integral as follows:
\begin{align}
  t \int_0^1e^{t(\theta\lambda_{+}+(1-\theta)\lambda_{-})}\dd \theta
  &\le t \int_0^1e^{-t\big(\theta\frac{\mu}{2}\vert\xi\vert^{\alpha}+(1-\theta)\frac{\gamma}{\mu}
  \vert\xi\vert^{2-\alpha}\big)}\dd \theta\notag \\
  & = e^{-\frac{\gamma}{\mu}t\vert\xi\vert^{2-\alpha}} \big(\tfrac{\mu}{2}
  \vert\xi\vert^\alpha - \tfrac{\gamma}{\mu}\vert\xi\vert^{2-\alpha}\big)^{-1}
  \left(1-e^{-t(\frac{\mu}{2}\vert\xi\vert^{\alpha}-\frac{\gamma}{\mu}\vert\xi\vert^{2-\alpha})}\right) \nonumber \\
  & \le \frac{4}{\mu} \vert\xi\vert^{-\alpha}e^{-\frac{\gamma}{\mu}t\vert\xi\vert^{2-\alpha}},\notag
\end{align}
where in the last line we have used the fact that $\big(1- \frac{2\gamma}{\mu^2} |\xi|^{2-2\alpha}\big)^{-1} \leq 2$.
From the definitions of $\widehat{\mathcal{G}}_{12}$ and $\widehat{\mathcal{G}}_{21}$ in \eqref{eq:Green1}, we have
\begin{align}\notag
  \vert \widehat{\mathcal{G}}_{21}(t,\xi)\vert = \gamma\vert \widehat{\mathcal{G}}_{12}(t,\xi)\vert
  \le C\vert\xi\vert^{1-\alpha} e^{-\frac{\gamma}{\mu}t\vert\xi\vert^{2-\alpha}}.
\end{align}
We rewrite $\widehat{\mathcal{G}}_{11}$ and $\widehat{\mathcal{G}}_{22}$ as follows:
\begin{align}\label{eq.green11}
  \widehat{\mathcal{G}}_{11}(t,\xi)
  =e^{t\lambda_{-}} -\frac{\lambda_{-}(e^{t\lambda_{+}}
  -e^{t\lambda_{-}})}{\lambda_{+}-\lambda_{-}}, \quad
  \widehat{\mathcal{G}}_{22}(t,\xi)
  =e^{t\lambda_{+}} +\frac{\lambda_{-}(e^{t\lambda_{+}}-e^{t\lambda_{-}})}{\lambda_{+}-\lambda_{-}}.
\end{align}
Hence, the triangle inequality gives
\begin{align}\notag
  \vert \widehat{\mathcal{G}}_{11}(t,\xi)\vert
  \le e^{-\frac{\gamma}{\mu}t\vert\xi\vert^{2-\alpha}}+C\vert\xi\vert^{2-2\alpha}e^{-Ct\vert\xi\vert^{2-\alpha}}
  \le Ce^{-Ct\vert\xi\vert^{2-\alpha}}.
\end{align}
Similarly, we can obtain \eqref{es:point-G22} concerning the estimate of $\vert \widehat{\mathcal{G}}_{22}\vert$.
\vskip1mm

\noindent(2) We prove \eqref{es:point-Gij} by respectively considering the following three high-frequency cases. 
\begin{enumerate}[$\triangleright$]
\item If $\vert\xi\vert^{1-\alpha}=\mu/(2\sqrt\gamma)$, the estimate \eqref{es:point-Gij} directly follows from \eqref{eq:Green2}.

\item  For $\mu/(2\sqrt\gamma)<\vert\xi\vert^{1-\alpha}\le\mu/\sqrt\gamma$,
recalling $\lambda_\pm$ are given by \eqref{eq:lamb-pm-high},
we see that
\begin{align}\notag
  \theta\lambda_{+}+(1-\theta)\lambda_{-}
  =\frac{-\mu\vert\xi\vert^{\alpha}}{2} \Big{(}1-i(1-2\theta)
  \sqrt{\tfrac{4\gamma}{\mu^2}\vert \xi\vert^{2-2\alpha}-1} \Big{)},
\end{align}
and
\begin{align}\notag
  \Big| \frac{e^{t\lambda_+} - e^{t\lambda_-}}{\lambda_+ -\lambda_-} \Big| \leq
  t\int_0^1\big\vert e^{t(\theta\lambda_{+}+(1-\theta)\lambda_{-})}\big\vert \dd \theta
  = t e^{-\frac{\mu}{2}t\vert\xi\vert^{\alpha}},
\end{align}
and thus
\begin{align}\notag
  \vert \widehat{\mathcal{G}}_{21}(t,\xi)\vert = \gamma\vert \widehat{\mathcal{G}}_{12}(t,\xi)\vert
  \le Ct\vert \xi\vert e^{-\frac{\mu}{2}t\vert\xi\vert^{\alpha}}
  \le Ce^{-Ct\vert\xi\vert^{\alpha}}
  \le Ce^{-Ct}.
\end{align}
Noting that $\Re(\lambda_{\pm}) = -\frac{\mu\vert\xi\vert^\alpha}{2}$
and $\vert\lambda_- \vert=\sqrt\gamma\vert \xi\vert$,
and in view of the equality \eqref{eq.green11}, we have 
\begin{align}\notag
  \vert \widehat{\mathcal{G}}_{11}(t,\xi) \vert + |\widehat{\mathcal{G}}_{22}(t,\xi)|
  \le  Ce^{-\frac{\mu}{2}t\vert\xi\vert^{\alpha}}+ Ct\vert \xi\vert e^{-\frac{\mu}{2}t\vert\xi\vert^{\alpha}}
  \le Ce^{-Ct}.
\end{align}

\item For $\vert\xi\vert^{1-\alpha}>\mu/\sqrt\gamma$, noting that
\begin{align}\notag
  \vert\lambda_{+}-\lambda_{-}\vert=\mu\vert\xi\vert^{\alpha}\sqrt{\tfrac{4\gamma}{\mu^2}\vert \xi\vert^{2-2\alpha}-1}
  \ge \mu\vert\xi\vert^{\alpha}\sqrt{\tfrac{3\gamma}{\mu^2}\vert \xi\vert^{2-2\alpha}}=\sqrt{3\gamma}\vert\xi\vert,
\end{align}
and recalling that $\Re(\lambda_{\pm})=-\frac{\mu\vert\xi\vert^{\alpha}}{2}$ and
$\vert\lambda_{\pm}\vert=\sqrt\gamma\vert \xi\vert$, we have
\begin{align}\notag
  \vert \widehat{\mathcal{G}}_{11}(t,\xi)\vert
  \le \frac{\vert\lambda_{+}\vert}{\vert\lambda_{+}-\lambda_{-}\vert}\vert e^{t\lambda_-}\vert
  +\frac{\vert\lambda_{-}\vert}{\vert\lambda_{+}-\lambda_{-}\vert}\vert e^{t\lambda_+}\vert
  \le Ce^{-Ct\vert\xi\vert^{\alpha}}
  \le Ce^{-Ct}.
\end{align}
Similarly, we can obtain the estimates of $\vert \widehat{\mathcal{G}}_{12}\vert$, $\vert \widehat{\mathcal{G}}_{22}\vert$
and $\vert \widehat{\mathcal{G}}_{22}\vert$ as in \eqref{es:point-Gij}.
\vskip1mm
\end{enumerate}

\noindent (3) We prove \eqref{eq:point-Gij2} by analyzing the following three cases.
\begin{enumerate}[$\triangleright$]
\item If $4\gamma<\mu^2$, we see that $\lambda_{\pm}={-\vert\xi\vert}{(}\mu\pm\sqrt{\mu^2-{4\gamma}}{)}/2$
and $\lambda_+ -\lambda_- = - \sqrt{\mu^2 -4\gamma} |\xi|$,
and thus
\begin{align}\notag
  \vert e^{t\lambda_{\pm}}\vert\le Ce^{-C\vert\xi\vert t},\quad
  \left\vert\frac{\lambda_{\pm}}{\lambda_{+}-\lambda_{-}}\right\vert\le C,\quad\text{and }
  \frac{\vert\xi\vert}{|\lambda_{+}-\lambda_{-}|}\le C.
\end{align}
Hence, the estimate \eqref{eq:point-Gij2} follows from the above inequalities
and the formula of $\vert \widehat{\mathcal{G}}_{ij}\vert$ in \eqref{eq:Green1}.

\item If $4\gamma=\mu^2$, in view of \eqref{eq:Green2} and the fact that
$\vert\xi\vert t e^{-\sqrt\gamma\vert\xi\vert t}\le Ce^{-\frac{\sqrt\gamma}{2}\vert\xi\vert t}$,
we directly obtain \eqref{eq:point-Gij2}.

\item If $4\gamma>\mu^2$, we see that $\lambda_{\pm}
={-\vert\xi\vert}{(}\mu\pm i\sqrt{4\gamma-\mu^2}{)}/2$,
$\lambda_+ -\lambda_- = -i \sqrt{4\gamma -\mu^2}|\xi|$,
and thus
\begin{align}\notag
  \vert e^{t\lambda_{\pm}}\vert\le Ce^{-\frac{\mu}{2}\vert\xi\vert t},\quad
  \left\vert\frac{\lambda_{\pm}}{\lambda_{+}-\lambda_{-}}\right\vert\le C,\quad\text{and }
  \frac{\vert\xi\vert}{|\lambda_{+}-\lambda_{-}|}\le C,
\end{align}
which in combination with \eqref{eq:Green1} leads to \eqref{eq:point-Gij2}.
\end{enumerate}
Hence, we complete the proof of Lemma \ref{lem.point.G.low-high}.
\end{proof}

Next, based on the above pointwise estimates,
we provide the following two lemmas concerning the exact decay estimates related to the Green matrix
$\widehat{\mathcal{G}}(t,\xi)$.
\begin{lemma}\label{lem:GM.L2}
Let $0<\alpha\le1$, $s\ge 0$ and $f\in \dot{H}^s\cap L^1(\R^N)$.
Then we have
\begin{align}
  &\label{eq.G11.Decay}\Vert \vert\xi\vert^s \widehat{\mathcal{G}}_{11}(t)\widehat{f}\Vert_{L^2}
  \le C \langle t\rangle^{-\frac{N+ 2s}{2(2-\alpha)}}\Vert f\Vert_{L^1\cap\dot{H}^{s}},\\
  &\label{eq.G12.Decay}\Vert \vert\xi\vert^{s}\widehat{\mathcal{G}}_{21}(t)\widehat{f}\Vert_{L^2}
  =\gamma\Vert \vert\xi\vert^{s}\widehat{\mathcal{G}}_{12}(t)\widehat{f}\Vert_{L^2}
  \le C\langle t\rangle^{-\frac{N + 2s +2-2\alpha}{2(2-\alpha)}}\Vert f\Vert_{ L^1\cap\dot{H}^{s}},\\
  &\label{eq.G22.Decay}\Vert \vert\xi\vert^{s}\widehat{\mathcal{G}}_{22}(t)\widehat{f}\Vert_{L^2}
  \le C\langle t\rangle^{-\min\left\{\frac{N+ 2s}{2\alpha},\frac{N+ 2s +4-4\alpha}{2(2-\alpha)}\right\}}
  \Vert f\Vert_{ L^1\cap\dot{H}^s},
\end{align}
where $C>0$ depends only on $\alpha,s,N,\mu,\gamma$.
\end{lemma}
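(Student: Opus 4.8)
The plan is to split the frequency space into low and high regimes according to the threshold $|\xi|^{1-\alpha}\sim \mu/(2\sqrt\gamma)$, and to apply the pointwise bounds of Lemma \ref{lem.point.G.low-high} in each regime, combined with the two elementary decay estimates for the fractional heat semigroup encoded in Lemma \ref{lem:heat.decay} (or rather, the computations inside its proof). For the low-frequency part we integrate the Gaussian-type factors $e^{-Ct|\xi|^{2-\alpha}}$, $e^{-Ct|\xi|^\alpha}$ against $|\widehat f|$; for the high-frequency part ($\alpha<1$) we get the uniform exponential factor $e^{-Ct}$, which gives super-polynomial decay and is harmless, while for $\alpha=1$ we simply integrate $e^{-C|\xi|t}$ over all of $\R^N$. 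The only slightly delicate point is that, since $f\in L^1\cap\dot H^s$ but not necessarily $f\in L^2$ globally, the standard trick is: on $t\le 1$ bound everything by $\|f\|_{\dot H^s}$ directly (using boundedness of the multipliers), and on $t>1$ use $\|\widehat f\|_{L^\infty}\le\|f\|_{L^1}$ together with the $L^2$-in-$\xi$ norm of the explicit kernel.

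First I would handle $t\le 1$. In the low-frequency region each $\widehat{\mathcal G}_{ij}$ is bounded by a constant (for $\widehat{\mathcal G}_{11},\widehat{\mathcal G}_{22}$) or by $|\xi|^{1-\alpha}$ resp.\ $|\xi|^{2-2\alpha}$ (for $\widehat{\mathcal G}_{12},\widehat{\mathcal G}_{21}$ resp.\ part of $\widehat{\mathcal G}_{22}$), all of which are $\le C$ on the low-frequency ball; in the high-frequency region they are bounded by $Ce^{-Ct}\le C$ (for $\alpha<1$) or $Ce^{-C|\xi|t}\le C$ (for $\alpha=1$). Hence $\||\xi|^s\widehat{\mathcal G}_{ij}(t)\widehat f\|_{L^2}\le C\|\,|\xi|^s\widehat f\,\|_{L^2}=C\|f\|_{\dot H^s}\le C\langle t\rangle^{-\theta}\|f\|_{\dot H^s}$ for any exponent $\theta$, since $\langle t\rangle\sim 1$ there. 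This disposes of the short-time range for all three estimates simultaneously.

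Next I would treat $t>1$, which is where the rates are generated. Writing $\|\,|\xi|^s\widehat{\mathcal G}_{ij}(t)\widehat f\,\|_{L^2}\le \|\,|\xi|^s\widehat{\mathcal G}_{ij}(t)\,\|_{L^2_\xi}\,\|\widehat f\|_{L^\infty}$ and $\|\widehat f\|_{L^\infty}\le\|f\|_{L^1}$, I reduce to computing the $L^2_\xi$-norm of each explicit kernel. For $\widehat{\mathcal G}_{11}$: the low-frequency contribution is controlled by $\int_{|\xi|\lesssim 1}|\xi|^{2s}e^{-Ct|\xi|^{2-\alpha}}\dd\xi\le Ct^{-\frac{N+2s}{2-\alpha}}$ by the change of variables $\eta=t^{1/(2-\alpha)}\xi$, while the high-frequency contribution carries the factor $e^{-Ct}$ (for $\alpha<1$) or decays like $\int e^{-C|\xi|t}\dd\xi\sim t^{-N}$ (for $\alpha=1$), in both cases faster than the stated rate; taking square roots gives \eqref{eq.G11.Decay}. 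For $\widehat{\mathcal G}_{12},\widehat{\mathcal G}_{21}$ the extra factor $|\xi|^{1-\alpha}$ in the low-frequency bound upgrades the exponent: $\int_{|\xi|\lesssim 1}|\xi|^{2s+2(1-\alpha)}e^{-Ct|\xi|^{2-\alpha}}\dd\xi\le Ct^{-\frac{N+2s+2-2\alpha}{2-\alpha}}$, yielding \eqref{eq.G12.Decay}. For $\widehat{\mathcal G}_{22}$ one splits the low-frequency bound \eqref{es:point-G22} into two pieces: the $e^{-Ct|\xi|^\alpha}$ piece gives $\int_{|\xi|\lesssim1}|\xi|^{2s}e^{-Ct|\xi|^\alpha}\dd\xi\le Ct^{-\frac{N+2s}{\alpha}}$, and the $|\xi|^{2-2\alpha}e^{-Ct|\xi|^{2-\alpha}}$ piece gives $Ct^{-\frac{N+2s+2(2-2\alpha)}{2-\alpha}}=Ct^{-\frac{N+2s+4-4\alpha}{2-\alpha}}$; taking the minimum of the two exponents (and noting the high-frequency part is again faster) produces \eqref{eq.G22.Decay}. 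Finally, combining the $t\le1$ and $t>1$ bounds replaces $t^{-\theta}$ by $\langle t\rangle^{-\theta}$ in each case, which is exactly the claimed form.

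I expect the main obstacle to be bookkeeping rather than a conceptual hurdle: one must be careful to verify that in every high-frequency subcase the decay is genuinely no slower than the low-frequency rate (for $\alpha=1$ the high-frequency factor is $e^{-C|\xi|t}$, whose $L^2_\xi$-norm is $\sim t^{-N/2}$ after weighting by $|\xi|^s$, i.e.\ $t^{-(N+2s)/2}$, and one checks $\tfrac{N+2s}{2}\ge\tfrac{N+2s}{2(2-\alpha)}$ since $2-\alpha\ge1$), and to keep track of the half-powers when passing from $L^2_\xi$ of the kernel to the $L^2$-norm of the product. A secondary point worth stating explicitly is the dyadic/annular summation or direct radial integration that justifies $\int_{|\xi|\le R}|\xi|^{2s}e^{-ct|\xi|^\beta}\dd\xi\le C t^{-(N+2s)/\beta}$ uniformly in $R$, which is the single computation underlying all three estimates and already appears in the proof of Lemma \ref{lem:heat.decay}.
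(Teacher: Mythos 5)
Your proposal follows essentially the same route as the paper: split frequencies at $|\xi|^{1-\alpha}\sim\mu/(2\sqrt\gamma)$, use the pointwise bounds of Lemma \ref{lem.point.G.low-high}, bound everything by $\|f\|_{\dot H^s}$ for $t\le1$, and for $t>1$ trade $\|\widehat f\|_{L^\infty}\le\|f\|_{L^1}$ against the $L^2_\xi$-norm of the kernel on the low-frequency ball, with $\alpha=1$ handled via $e^{-C|\xi|t}$ as in Lemma \ref{lem:heat.decay}. The only slip is the blanket reduction $\||\xi|^s\widehat{\mathcal G}_{ij}(t)\widehat f\|_{L^2}\le\||\xi|^s\widehat{\mathcal G}_{ij}(t)\|_{L^2_\xi}\|\widehat f\|_{L^\infty}$ for $t>1$: on the unbounded high-frequency region with $\alpha<1$ the kernel is only $O(e^{-Ct})$, so $\||\xi|^s e^{-Ct}\|_{L^2(D^c)}$ diverges; there one must instead pair the uniform factor $e^{-Ct}$ with $\||\xi|^s\widehat f\|_{L^2(D^c)}\le\|f\|_{\dot H^s}$, exactly as the paper does — a one-line fix that does not affect the stated rates.
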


\begin{proof}[Proof of Lemma \ref{lem:GM.L2}]
For $\alpha=1$, the estimates \eqref{eq.G11.Decay}-\eqref{eq.G22.Decay}
naturally follow from \eqref{eq:point-Gij2} and Lemma \ref{lem:heat.decay}.
Now we consider the case $0<\alpha<1$.
Denote by 
\begin{equation}\label{eq:D}
D \triangleq \{\xi\in\R^N:\vert\xi\vert^{1-\alpha}<\mu/(2\sqrt\gamma)\}.
\end{equation}
Thanks to Lemma \ref{lem.point.G.low-high}, we have for $t\le 1$,
\begin{equation}\label{eq.G11.D1}
\begin{aligned}
  \Vert\vert\xi\vert^{s}\widehat{\mathcal{G}}_{11}(t)\widehat{f}\Vert_{L^2}
  &\le \Vert\vert\xi\vert^{s}\widehat{\mathcal{G}}_{11}(t)\widehat{f}\Vert_{L^2(D)}
  +\Vert\vert\xi\vert^{s}\widehat{\mathcal{G}}_{11}(t)\widehat{f}\Vert_{L^2(D^c)}\\
  &\le C\Vert e^{-C\vert\xi\vert^{2-\alpha}t}\Vert_{L^\infty}\Vert
  \vert\xi\vert^s \widehat{f}\Vert_{L^2}+e^{-C t}\Vert\vert\xi\vert^{s}\widehat{f}\Vert_{L^2}
  \le C \Vert {f}\Vert_{\dot{H}^s},
\end{aligned}
\end{equation}
and for $t>1$,
\begin{equation}\label{eq.G11.D2}
\begin{aligned}
  \Vert\vert\xi\vert^{s}\widehat{\mathcal{G}}_{11}(t)\widehat{f}\Vert_{L^2}
  &\le \Vert\vert\xi\vert^{s}\widehat{\mathcal{G}}_{11}(t)\widehat{f}\Vert_{L^2(D)}
  +\Vert\vert\xi\vert^{s}\widehat{\mathcal{G}}_{11}(t)\widehat{f}\Vert_{L^2(D^c)}\\
  &\le C\Vert \vert\xi\vert^{s}e^{-C\vert\xi\vert^{2-\alpha}t}\Vert_{L^2}\Vert \widehat{f}\Vert_{L^\infty}
  +e^{-C t}\Vert\vert\xi\vert^{s}\widehat{f}\Vert_{L^2}
  \le C t^{-\frac{N+2s}{2(2-\alpha)}}\Vert f\Vert_{L^1\cap\dot{H}^s}.
\end{aligned}
\end{equation}
Combining \eqref{eq.G11.D1} and \eqref{eq.G11.D2} leads to \eqref{eq.G11.Decay}.
In a similar way, we can analogously prove the estimates \eqref{eq.G12.Decay} and \eqref{eq.G22.Decay}.
\end{proof}

\begin{lemma}\label{lem:GM.L1}
Let $0<\alpha\le1$ and $f,\widehat{f}\in L^1(\R^N)$.
Then we have
\begin{align}
  &\label{eq.G11.Decay.Linf}
  \Vert \widehat{\mathcal{G}}_{11}(t)\widehat{f}\Vert_{L^1}
  \le C \langle t\rangle^{-\frac{N}{2-\alpha}} \big(\Vert f\Vert_{L^1}+\Vert \widehat{f}\Vert_{L^1} \big),\\
  &\label{eq.G12.Decay.Linf}
  \Vert \widehat{\mathcal{G}}_{21}(t)\widehat{f}\Vert_{L^1}
  =\gamma\Vert \widehat{\mathcal{G}}_{12}(t)\widehat{f}\Vert_{L^2}
  \le C\langle t\rangle^{-\frac{N+1-\alpha}{2-\alpha}}
  \big(\Vert f\Vert_{L^1}  + \Vert \widehat{f}\Vert_{L^1}\big),\\
  &\label{eq.G22.Decay.Linf}
  \Vert \widehat{\mathcal{G}}_{22}(t)\widehat{f}\Vert_{L^1}
  \le C\langle t\rangle^{-\min\left\{\frac{N}{\alpha},\frac{N+2-2\alpha}{2-\alpha}\right\}}
  \big(\Vert f\Vert_{L^1}  + \Vert \widehat{f}\Vert_{L^1}\big),
\end{align}
with $C>0$ depending only on $\alpha,s,N,\mu,\gamma$.
\end{lemma}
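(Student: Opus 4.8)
The plan is to follow verbatim the strategy of the proof of Lemma \ref{lem:GM.L2}, replacing the $L^2_\xi$ estimates by $L^1_\xi$ estimates and invoking Lemma \ref{lem:heat.decay.L1} in place of Lemma \ref{lem:heat.decay}. For $\alpha=1$, the bound \eqref{eq:point-Gij2} reads $|\widehat{\mathcal G}_{ij}(t,\xi)|\le Ce^{-C|\xi|t}$ for all $i,j$, and since every exponent on the right-hand side of \eqref{eq.G11.Decay.Linf}--\eqref{eq.G22.Decay.Linf} collapses to $N$ when $\alpha=1$, all three estimates follow at once from $\|e^{-C|\xi|t}\widehat f\|_{L^1}\le C\langle t\rangle^{-N}(\|f\|_{L^1}+\|\widehat f\|_{L^1})$, which is Lemma \ref{lem:heat.decay.L1} with $\beta=1$ (after absorbing the constant appearing in the exponent into $\nu$).

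For $0<\alpha<1$ I would split $\R^N=D\cup D^c$ with $D$ as in \eqref{eq:D}, and split the time interval into $t\le 1$ and $t>1$. On $D^c$, estimate \eqref{es:point-Gij} gives $|\widehat{\mathcal G}_{ij}(t,\xi)|\le Ce^{-Ct}$ for every $i,j$, whence $\|\widehat{\mathcal G}_{ij}(t)\widehat f\|_{L^1(D^c)}\le Ce^{-Ct}\|\widehat f\|_{L^1}$, which decays faster than any polynomial rate and is thus absorbed into the claimed bounds. On $D$ and for $t\le 1$, each of the multipliers appearing in Lemma \ref{lem.point.G.low-high}(1), namely $e^{-Ct|\xi|^{2-\alpha}}$, $|\xi|^{1-\alpha}e^{-Ct|\xi|^{2-\alpha}}$, $e^{-Ct|\xi|^{\alpha}}$ and $|\xi|^{2-2\alpha}e^{-Ct|\xi|^{2-\alpha}}$, is uniformly bounded on $D$ (there $|\xi|^{1-\alpha}<\mu/(2\sqrt\gamma)$), so the associated contribution is $\le C\|\widehat f\|_{L^1}\le C\langle t\rangle^{-r}(\|f\|_{L^1}+\|\widehat f\|_{L^1})$ on $t\le 1$, with $r$ the relevant exponent.

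The remaining and main case is $t>1$ on $D$. Here I would use $\|m(\xi)\widehat f\|_{L^1(D)}\le \|m\|_{L^1(\R^N)}\|\widehat f\|_{L^\infty}\le C\|m\|_{L^1(\R^N)}\|f\|_{L^1}$ together with the elementary scaling identity $\||\xi|^{k}e^{-ct|\xi|^{m}}\|_{L^1(\R^N)}=C\,t^{-\frac{N+k}{m}}$, valid for $k\ge 0$, $m>0$, $c>0$. Applying it with $(k,m)=(0,2-\alpha)$ to the pointwise bound on $\widehat{\mathcal G}_{11}$ yields \eqref{eq.G11.Decay.Linf}; with $(k,m)=(1-\alpha,2-\alpha)$ to the bound on $\widehat{\mathcal G}_{12},\widehat{\mathcal G}_{21}$ yields \eqref{eq.G12.Decay.Linf}; and for $\widehat{\mathcal G}_{22}$ one adds the two bounds coming from $(k,m)=(0,\alpha)$ and $(k,m)=(2-2\alpha,2-\alpha)$, which produces the minimum of the two rates in \eqref{eq.G22.Decay.Linf}. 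Combining the $t\le 1$ estimate, the $t>1$ estimate on $D$, and the exponentially small $D^c$ contribution gives the stated $\langle t\rangle^{-r}$ bounds. The only point requiring care — and the reason both $f\in L^1$ and $\widehat f\in L^1$ are assumed — is that the $t>1$ argument uses $\|\widehat f\|_{L^\infty}\le\|f\|_{L^1}$ while the $t\le 1$ and $D^c$ arguments use $\widehat f\in L^1$; beyond this the proof is routine bookkeeping of the scaling exponents, with the $\widehat{\mathcal G}_{22}$ term the only one needing two separate scaling estimates.
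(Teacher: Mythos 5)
Your proof is correct and follows essentially the same route as the paper: the case $\alpha=1$ via the pointwise bound \eqref{eq:point-Gij2} and Lemma \ref{lem:heat.decay.L1}, and for $0<\alpha<1$ the split into $D$ and $D^c$ with $t\le 1$ versus $t>1$, using $\Vert \widehat f\Vert_{L^\infty}\le \Vert f\Vert_{L^1}$ together with the $L^1_\xi$ scaling of the multipliers for large times and $\Vert\widehat f\Vert_{L^1}$ otherwise. Your explicit treatment of the two competing terms in the $\widehat{\mathcal G}_{22}$ bound is exactly the "analogous" step the paper leaves to the reader, so there is nothing genuinely different here.
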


\begin{proof}[Proof of Lemma \ref{lem:GM.L1}]
For $\alpha=1$, \eqref{eq.G11.Decay.Linf}-\eqref{eq.G22.Decay.Linf} can be deduced from \eqref{eq:point-Gij2}
and Lemma \ref{lem:heat.decay.L1}.
Next we treat the case $0<\alpha <1$. Recalling $D$ is defined in \eqref{eq:D},
and in light of Lemma \ref{lem.point.G.low-high}, we have that for $t\le 1$,
\begin{equation}\label{eq.G11.D1.Linf}
\begin{aligned}
  \Vert\widehat{\mathcal{G}}_{11}(t)\widehat{f}\Vert_{L^1}
  &\le \Vert\widehat{\mathcal{G}}_{11}(t)\widehat{f}\Vert_{L^1(D)}
  +\Vert\widehat{\mathcal{G}}_{11}(t)\widehat{f}\Vert_{L^1(D^c)}\\
  &\le C\Vert e^{-C\vert\xi\vert^{2-\alpha}t}\Vert_{L^\infty}\Vert \widehat{f}\Vert_{L^1}
  +e^{-C t}\Vert\widehat{f}\Vert_{L^1}
  \le C \Vert \widehat{f}\Vert_{L^1},
\end{aligned}
\end{equation}
and for $t>1$,
\begin{equation}\label{eq.G11.D2.Linf}
\begin{aligned}
  \Vert\widehat{\mathcal{G}}_{11}(t)\widehat{f}\Vert_{L^1}
  &\le \Vert\widehat{\mathcal{G}}_{11}(t)\widehat{f}\Vert_{L^1(D)}
  +\Vert\widehat{\mathcal{G}}_{11}(t)\widehat{f}\Vert_{L^1(D^c)}\\
  &\le C\Vert e^{-C\vert\xi \vert^{2-\alpha}t}\Vert_{L^1}\Vert \widehat{f}\Vert_{L^\infty}
  +e^{-C t}\Vert\widehat{f}\Vert_{L^1}
  \le C t^{-\frac{N}{2-\alpha}}(\Vert f\Vert_{L^1}+\Vert \widehat{f}\Vert_{L^1}).
\end{aligned}
\end{equation}
Combining these two estimates leads to \eqref{eq.G11.Decay.Linf}.
Similarly, we can analogously show the estimates \eqref{eq.G12.Decay.Linf} and \eqref{eq.G22.Decay.Linf}.
\end{proof}

\subsection{Rough upper bounds}
In this subsection, we shall establish a rough upper bound for the decay rate of the global smooth solution $(a,u)$ to the system \ref{eq.EAS.a}.

Before proceeding forward we introduce some notations.
Denote by
\begin{align}
  & U_\beta(t) \triangleq \sup_{0\le \tau\le t}\langle \tau\rangle^{\frac{N+2\beta}{2(2-\alpha)}}
  \Vert (\Lambda^\beta a,\Lambda^\beta u)(\tau)\Vert_{L^2}, \quad \\
  & V(t) \triangleq \sup_{0\le \tau\le t}\langle \tau\rangle^{\frac{N}{2-\alpha}}
  \Vert (\widehat{a},\widehat{u})(\tau)\Vert_{L^1},\\
  &{E}(t) \triangleq \sup_{0\le \tau\le t}\Vert (a,u)(\tau)\Vert_{H^s}.
\end{align}
Our main result of this subsection is as follows.
\begin{proposition}\label{Prop:upper.rough}
Let $\alpha\in(0,1]$, $N\ge 2$ and $s > \frac{N}{2}+1$.
Suppose that $(a_0,u_0) \in H^s \cap L^1(\R^N)$ and $(a,u)$ is a global solution of system \eqref{eq.EAS.a} such that
\begin{align*}
  \sup_{0\le t<+\infty} \big(\Vert a(t)\Vert_{H^s}+\Vert u(t)\Vert_{H^s} \big) < +\infty.
\end{align*}
Then we have
\begin{align}
  &\Vert(a,u)(t)\Vert_{L^2}\le C\langle t\rangle^{-\frac{N}{2(2-\alpha)}},  \label{eq.decay1}\\
  & \Vert (a,u)(t)\Vert_{L^\infty}\le C\langle t\rangle^{-\frac{N}{2-\alpha}}, \label{eq.decay2} \\
  & \Vert (\nabla a,\nabla u)(t)\Vert_{L^2}\le C\langle t\rangle^{-\frac{N+2}{2(2-\alpha)}}. \label{eq.decay3}
\end{align}
\end{proposition}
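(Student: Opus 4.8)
The strategy is a bootstrap argument built on the Duhamel formula for $(a,v)$ in \eqref{eq.CP} together with the fractional-heat representation of $\mathbb{P}u$ from the third equation of \eqref{eq.lineareq4au}. The key quantities to close are $U_0(t)$, $U_1(t)$ and $V(t)$: we will show that each is bounded by a constant plus a term that is superlinear (quadratic) in the combination $U_0+U_1+V$, while $E(t)$ stays uniformly bounded by hypothesis. Since all these quantities are continuous in $t$ and small at $t=0$ (from $(a_0,u_0)\in H^s\cap L^1$), the continuity/bootstrap method then forces them to remain bounded for all time, which is exactly \eqref{eq.decay1}--\eqref{eq.decay3} after unravelling the definitions of $U_\beta$ and $V$ and using the Gagliardo--Nirenberg interpolation $\|f\|_{L^\infty}\lesssim \|f\|_{L^2}^{1-N/(2s')}\|\Lambda^{s'}f\|_{L^2}^{N/(2s')}$ (or directly $\|f\|_{L^\infty}\lesssim\|\widehat f\|_{L^1}$, giving \eqref{eq.decay2} straight from $V$).

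\textbf{Linear decay of the Green matrix.} First I would record the decay of the linear evolution: applying Lemma \ref{lem:GM.L2} to the data term $\widehat{\mathcal G}(t,\xi)(\widehat{a_0},\widehat{v_0})^\top$ gives $\langle t\rangle^{-N/(2(2-\alpha))}$ decay for $\|(a,v)\|_{L^2}$ and $\langle t\rangle^{-(N+2)/(2(2-\alpha))}$ for $\|\nabla(a,v)\|_{L^2}$; Lemma \ref{lem:GM.L1} gives $\langle t\rangle^{-N/(2-\alpha)}$ for $\|(\widehat a,\widehat v)\|_{L^1}$; and Lemma \ref{lem:heat.decay}/\ref{lem:heat.decay.L1} applied with $A(D)=\Lambda^{-1}\Div$ or $\mathbb P$ and $\beta=\alpha$ handles the $\mathbb{P}u$ part (note $\|\mathbb{P}u\|$ decays at the \emph{faster} fractional-heat rate, which is why we only need the slower $(a,v)$ rate to govern $u=-\nabla\Lambda^{-1}v+\mathbb Pu$). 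Since $N\ge 2$ and $\alpha\le 1$, the worst exponent $\frac{N}{2(2-\alpha)}$ is $\ge\frac12$, so it is integrable-friendly in the Duhamel convolutions.

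\textbf{Controlling the Duhamel integral.} The core estimate is to bound $\int_0^t \widehat{\mathcal G}(t-\tau,\xi)(\widehat F,\widehat G)^\top\,\dd\tau$ (and the analogous $\mathbb{P}u$ term with $H$). For this one needs two kinds of bounds on the nonlinearities $F,G,H$ defined in \eqref{def:F}--\eqref{def:H}: an $L^1$ bound and an $L^2$ (or $\dot H^s$) bound. Schematically $F,G,H$ are sums of terms of the type $au$, $u\cdot\nabla u$, $(\rho^{\gamma-2}-1)\nabla a$, and the commutator $u\Lambda^\alpha a-\Lambda^\alpha(au)$. Using the product estimates of Lemma \ref{lem:Leibniz}, the commutator estimates of Lemma \ref{lem:commutator} (in particular \eqref{comm24} for the $L^1$ bound of the commutator), the composition estimates of Lemma \ref{Lem:composite}, and the uniform $H^s$ bound $E(t)\lesssim 1$, each such term obeys
\[
 \|F,G,H\|_{L^1}\lesssim \langle\tau\rangle^{-2r_1}(U_0+V)^2,\qquad
 \|F,G,H\|_{L^2}\lesssim \langle\tau\rangle^{-\,(\text{rate})}\,E(t)(U_0+U_1+V),
\]
roughly speaking, where $r_1=\frac{N}{2(2-\alpha)}$; the gain of an extra derivative in $\nabla(au)$ etc.\ is absorbed by $U_1$ or by $E$ via interpolation. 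Feeding these into the Green-matrix decay estimates and invoking the convolution inequality Lemma \ref{lem:convolution} (with the case distinctions on whether the exponents exceed, equal, or fall below $1$, which is where the constraint $N\ge2$ matters to keep $2r_1>1$ in the dangerous range, or otherwise to tolerate the $1-\alpha_1-\alpha_2$ loss) yields $U_0(t),U_1(t),V(t)\le C+C(U_0+U_1+V)^2$.

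\textbf{Main obstacle.} The delicate point is the bookkeeping of time-decay exponents in the Duhamel integral so that the convolution Lemma \ref{lem:convolution} returns the \emph{same} rate that defines $U_\beta$ and $V$ — i.e.\ closing the bootstrap rather than losing a logarithm or a power. Concretely, the $\mathbb{P}u$-equation forces the commutator term $u\Lambda^\alpha a-\Lambda^\alpha(au)$ to be estimated carefully: it has a genuine derivative loss of order $\alpha$, so one must exploit that $\Lambda^{\alpha/2}u$ (or the regularity stored in $E$) tames it, and simultaneously that its $L^1$ norm via \eqref{comm24} decays like a product of two $L^2$ rates. A secondary subtlety is that for small $\alpha$ the rate $r_1=\frac{N}{2(2-\alpha)}$ is close to $\frac{N}{4}$, which for $N=2$ is only $\frac12$; one must check that the nonlinear source, being quadratic, decays at rate $2r_1=\frac{N}{2-\alpha}>1$, so Lemma \ref{lem:convolution}'s first case applies and no loss occurs. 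Handling the pressure-type term $(\rho^{\gamma-2}-1)\nabla a$ is routine once Lemma \ref{Lem:composite}(1) is used to see $\rho^{\gamma-2}-1 = a\,G(a)$ with $G$ bounded, so it inherits the decay of $a$.
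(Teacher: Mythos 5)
Your skeleton — Duhamel for the compressible pair $(a,v)$ through the Green matrix of Lemmas \ref{lem:GM.L2}--\ref{lem:GM.L1}, the fractional heat semigroup for $\mathbb{P}u$, nonlinear bounds in $L^1$ and $L^2/\dot H^1$ via Lemmas \ref{lem:Leibniz}, \ref{lem:commutator}, \ref{Lem:composite}, and the convolution Lemma \ref{lem:convolution} — is exactly the paper's. The gap is in how you close. You propose inequalities of the form $U_0,U_1,V\le C_0+C_1(U_0+U_1+V)^2$ and a continuity/bootstrap argument justified by the quantities being ``small at $t=0$.'' But the proposition assumes neither small initial data nor small energy: $U_0(0)$, $U_1(0)$, $V(0)$ equal (up to constants) $\Vert(a_0,u_0)\Vert_{L^2}$, $\Vert\nabla(a_0,u_0)\Vert_{L^2}$, $\Vert(\widehat{a_0},\widehat{u_0})\Vert_{L^1}$, which are merely finite, and $E(t)$ is only assumed bounded. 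A quadratic self-bound $X\le C_0+C_1X^2$ only traps $X$ when $4C_0C_1<1$, so without a smallness hypothesis your bootstrap does not close; this is not a bookkeeping issue with the exponents (your check that $2r_1=\tfrac{N}{2-\alpha}>1$ is fine) but a structural one.

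The paper avoids this by arranging every Duhamel nonlinearity so that one factor is absorbed into the bounded energy $E(\tau)$ (or, for $V$, into already-controlled quantities), producing estimates that are \emph{linear} in the decay functionals: $U_0(t)\le C+C\int_0^t W_1(t,\tau)E(\tau)(U_0+U_1)(\tau)\,\dd\tau$, similarly for $U_1$ with kernel $W_2$, and then $V(t)\le C+C(U_0+U_1)^2(t)+C\int_0^t W_3(t,\tau)E(\tau)V(\tau)\,\dd\tau$, where the $(U_0+U_1)^2$ term is harmless because $U_0+U_1$ has already been shown finite. Since $\sup_t\int_0^t(W_1+W_2+W_3)(t,\tau)\,\dd\tau<\infty$ by Lemma \ref{lem:convolution}, Gronwall's inequality closes each step using only the finiteness of $E$, exactly matching the hypotheses of the proposition. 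To repair your argument you would either have to add a smallness assumption not present in the statement, or redo the nonlinear estimates in the paper's ``one factor in $E$, one factor in the decay functional'' form and replace the bootstrap by Gronwall.
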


\begin{proof}[Proof of Proposition \ref{Prop:upper.rough}]
We divide the proof into four steps.

\noindent \textbf{Step 1}: Estimation of $U_0(t)$. We intend to prove that
\begin{align}\label{eq.U0}
  U_0(t)\le C \Vert (a_0,u_0)\Vert_{ L^1\cap L^2} + C\int_0^t W_1(t,\tau)(U_0+U_1)(\tau)E(\tau)\dd \tau,
\end{align}
where the weight function $W_1$ is defined by
\begin{align*}
  W_1(t,\tau)& \triangleq \langle t-\tau\rangle^{-\frac{N+2}{2(2-\alpha)}}
  \langle \tau\rangle^{-\frac{N}{2(2-\alpha)}}
  \langle t\rangle^{\frac{N}{2(2-\alpha)}}
  \\&\quad
  +\langle t-\tau\rangle^{-\min \big\{\frac{N}{2\alpha},\frac{N+2(1-\alpha)}{2(2-\alpha)}\big\}}
  \langle \tau \rangle^{- \frac{N+2\alpha}{2(2-\alpha)}}
  \langle t\rangle^{\frac{N}{2(2-\alpha)}}.
\end{align*}

We first consider the incompressible part $\mathbb{P}u$ with $\mathbb{P} \triangleq \Id - \nabla \Delta^{-1} \Div$.
Recalling that $\mathbb{P}u$ satisfies the third equation of \eqref{eq.lineareq4au}, we deduce that
\begin{align}\label{eq.Pu.decay}
  \mathbb{P}u(t) =e^{-\mu t\Lambda^{\alpha}}\mathbb{P}u_0
  +\int_0^te^{-\mu (t-\tau)\Lambda^{\alpha}}\mathbb{P}\widetilde{H}(\tau)\dd \tau,
\end{align}
with
\begin{align}\label{def:tild-H}
  \widetilde{H} \triangleq \mu \big( u\Lambda^{\alpha}a-\Lambda^{\alpha}(a u)\big)- u\cdot\nabla u.
\end{align}
By virtue of Lemma \ref{lem:heat.decay}, the first term on the right-hand side of \eqref{eq.Pu.decay} can be estimated as
\begin{align}\label{eq.Pu1.decay}
  \Vert e^{-\mu t\Lambda^{\alpha}}\mathbb{P}u_0\Vert_{L^2}
  \le C\Vert e^{-\mu t\vert\xi\vert^{\alpha}}u_0\Vert_{L^2}
  \le C\langle t\rangle^{-\frac{N}{2\alpha}}\Vert u_0\Vert_{L^1\cap L^2}.
\end{align}
Thanks to Lemma \ref{lem:commutator}, we have
\begin{align}\label{eq.tildeH.L1}
  \Vert \widetilde{H} \Vert_{L^1 \cap L^2}
  \leq C\big(\Vert \Lambda^\alpha u\Vert_{L^2} \Vert a\Vert_{L^2\cap L^\infty}
  + \Vert u\Vert_{L^2\cap L^\infty} \Vert\nabla u\Vert_{L^2}\big).
\end{align}
Using Lemma \ref{lem:heat.decay} and the above inequality, we find
\begin{align}\label{eq.Pu2.decay}
  & \int_0^t\Vert e^{-\mu (t-\tau)\Lambda^{\alpha}}\mathbb{P} \widetilde{H}(\tau)\Vert_{L^2}\dd \tau
  \le C\int_0^t\langle t-\tau\rangle^{-\frac{N}{2\alpha}} \Vert \widetilde{H}(\tau)\Vert_{L^1 \cap L^2} \dd \tau\notag\\
  & \leq C \int_0^t\langle t-\tau\rangle^{-\frac{N}{2\alpha}} \Big( \|\Lambda^\alpha u\|_{L^2} \|a\|_{L^2\cap L^\infty} + \|\nabla u\|_{L^2} \|u\|_{L^2\cap L^\infty} \Big)\dd\tau \nonumber \\
  &\le C\int_0^t\langle t-\tau\rangle^{-\frac{N}{2\alpha}}
  \langle \tau \rangle^{- \frac{N+2\alpha}{2(2-\alpha)}}  (EU_\alpha+EU_1)(\tau)   \dd \tau.
\end{align}
Collecting \eqref{eq.Pu.decay}, \eqref{eq.Pu1.decay} and \eqref{eq.Pu2.decay} yields
\begin{align}\label{eq.Pu.L2.decay1}
  \Vert\mathbb{P}u\Vert_{L^2}
  &\le C\langle t\rangle^{-\frac{N}{2(2-\alpha)}}\Big(\Vert u_0\Vert_{L^1\cap L^2}
  + \int_0^tW_1(t,\tau)\big(U_\alpha+U_{1}\big)( \tau)E(\tau)\dd \tau\Big).
\end{align}

Now we consider the compressible part $(a,v)$.
From the equality \eqref{eq.CP}, we have
\begin{align}\label{eq.sigma.L2.decay}
  \Vert a(t)\Vert_{L^2}
  \le \Vert\widehat{\mathcal{G}}_{11}(t)\widehat{a_0}\Vert_{L^2}+\Vert\widehat{\mathcal{G}}_{12}(t)\widehat{v_0}\Vert_{L^2}
  +\int_0^t \Big(\Vert\widehat{\mathcal{G}}_{11}(t-\tau)\widehat{F}\Vert_{L^2}
  +\Vert\widehat{\mathcal{G}}_{12}(t-\tau)\widehat{G} \Vert_{L^2} \Big)\dd \tau,
\end{align}
and
\begin{align}\label{eq.v.L2.decay}
  \Vert v(t)\Vert_{L^2}
  \le \Vert\widehat{\mathcal{G}}_{21}(t)\widehat{a_0}\Vert_{L^2}
  +\Vert\widehat{\mathcal{G}}_{22}(t)\widehat{v_0}\Vert_{L^2}
  +\int_0^t \Big(\Vert\widehat{\mathcal{G}}_{21}(t-\tau)\widehat{F}\Vert_{L^2}
  +\Vert\widehat{\mathcal{G}}_{22}(t-\tau)\widehat{G}\Vert_{L^2}\Big) \dd \tau,
\end{align}
where $F =\Div(au)$ and $G$ is defined by \eqref{def:G}.
In view of Lemma \ref{lem:GM.L2}, we get
\begin{align}\label{eq.G11.int.decay}
  \Vert\widehat{\mathcal{G}}_{11}(t)\widehat{a_0}\Vert_{L^2}
  \le C \langle t\rangle^{-\frac{N}{2(2-\alpha)}}\Vert a_0\Vert_{L^1\cap L^2},
\end{align}
\begin{align}\label{eq.G12.int.decay}
  \Vert\widehat{\mathcal{G}}_{12}(t)\widehat{v_0}\Vert_{L^2}
  + \Vert\widehat{\mathcal{G}}_{21}(t) \widehat{a_0} \Vert_{L^2}
  \le C\Vert\widehat{\mathcal{G}}_{12}(t) (\widehat{u_0}, \widehat{a_0})\Vert_{L^2}
  \le C \langle t\rangle^{-\frac{N+2(1-\alpha)}{2(2-\alpha)}}\Vert (u_0,a_0)\Vert_{L^1\cap L^2},
\end{align}
\begin{align}\label{eq.G22.int.decay}
  \Vert\widehat{\mathcal{G}}_{22}(t)\widehat{v_0}\Vert_{L^2}
  \le C\Vert\widehat{\mathcal{G}}_{22}(t)\widehat{u_0}\Vert_{L^2}
  \le C \langle t\rangle^{-\frac{N+4(1-\alpha)}{2(2-\alpha)}}\Vert u_0\Vert_{L^1\cap L^2}.
\end{align}
Taking advantage of Lemma \ref{lem:GM.L2} and the following inequality
\begin{align*}
  \Vert au\Vert_{\dot{H}^1}
  \le C \big(\Vert u\Vert_{L^\infty}\Vert a\Vert_{\dot{H}^1}+\Vert a\Vert_{L^\infty}\Vert u\Vert_{\dot{H}^1}\big)
  \leq C \|(a,u)\|_{\dot H^1} \|({a},{u})\|_{L^\infty},
\end{align*}
we infer that
\begin{align}\label{eq.G11.F.decay}
  \int_0^t\Vert\widehat{\mathcal{G}}_{11}(t-\tau)\widehat{F}(\tau)\Vert_{L^2}\dd \tau
  &\le C\int_0^t\langle t-\tau\rangle^{-\frac{N+2}{2(2-\alpha)}}\Vert (au)(\tau)
  \Vert_{L^1\cap\dot{H}^{1}}\dd \tau\notag\\
  & \leq C \int_0^t \langle t-\tau \rangle^{-\frac{N+2}{2(2-\alpha)}} \big(
  \|a\|_{L^2} \|u\|_{L^2} +  \|(a,u)\|_{\dot H^1} \|({a},{u})\|_{L^\infty}  \big)
  \dd \tau \nonumber \\
  &\le C\int_0^t\langle t-\tau\rangle^{-\frac{N+2}{2(2-\alpha)}}
  \langle \tau\rangle^{-\frac{N}{2(2-\alpha)}}\big(EU_0 + E U_1\big)(\tau)\dd \tau,
\end{align}
and
\begin{align}\label{eq.G21.F.decay}
  \int_0^t\Vert\widehat{\mathcal{G}}_{21}(t-\tau)\widehat{F}(\tau) \Vert_{L^2}\dd \tau
  &\le C\int_0^t\langle t-\tau\rangle^{-\frac{N+2(2-\alpha)}{2(2-\alpha)}}
  \Vert (au)(\tau)\Vert_{L^1\cap\dot{H}^1}\dd \tau\notag\\
  &\le C\int_0^t\langle t-\tau\rangle^{-\frac{N+2}{2(2-\alpha)}}
  \langle \tau\rangle^{-\frac{N}{2(2-\alpha)}}\big(EU_0 + E U_1\big)(\tau)\dd \tau.
\end{align}
Denote by
\begin{align}\label{def:tild-G}
  \widetilde{G}\triangleq \mu \big(u\Lambda^{\alpha}a-\Lambda^{\alpha}(au)\big)-u\cdot\nabla u
  -\gamma\big((1+a)^{\gamma-2}-1\big)\nabla a,
\end{align}
then Lemmas \ref{lem:commutator} and \ref{Lem:composite} yield that
\begin{align}\label{eq.tildeG.L1.Decay}
  \Vert \widetilde{G}\Vert_{L^1\cap L^2}
  \le C \big(\Vert \Lambda^\alpha u\Vert_{L^2} \Vert a\Vert_{L^2\cap L^\infty}
  + \Vert u\Vert_{L^2\cap L^\infty} \Vert\nabla u\Vert_{L^2}
  + \Vert a\Vert_{L^2\cap L^\infty}\Vert\nabla a\Vert_{L^2}\big).
\end{align}
Arguing as \eqref{eq.G11.F.decay} we find
\begin{align}\label{eq.G12.G.decay}
  &\int_0^t\Vert \widehat{\mathcal{G}}_{12}(t-\tau)\widehat{G}(\tau)\Vert_{L^2}\dd \tau
  \le C\int_0^t\langle t-\tau\rangle^{-\frac{N+2(1-\alpha)}{2(2-\alpha)}}
  \Vert \widetilde{G}(\tau)\Vert_{L^1\cap L^2} \dd \tau\notag\\
  & \leq C\int_0^t\langle t-\tau\rangle^{-\frac{N+2(1-\alpha)}{2(2-\alpha)}}
  \Big(\Vert \Lambda^\alpha u\Vert_{L^2} \Vert a\Vert_{L^2\cap L^\infty}
  + \Vert(\nabla a,\nabla u)\Vert_{L^2}\Vert u\Vert_{L^2\cap L^\infty} \Big) \dd \tau\nonumber \\
  &\le C\int_0^t\langle t-\tau\rangle^{-\frac{N+2(1-\alpha)}{2(2-\alpha)}}
  \langle \tau\rangle^{-\frac{N+2\alpha}{2(2-\alpha)}}\big(EU_1+ EU_\alpha\big)(\tau)\dd \tau ,
\end{align}
and
\begin{align}\label{eq.G22.G.decay}
  \int_0^t\Vert\widehat{\mathcal{G}}_{22}(t-\tau)\widehat{G}(\tau)\Vert_{L^2}\dd \tau
  &\le C\int_0^t\langle t-\tau\rangle^{-\frac{N+4-4\alpha}{2(2-\alpha)}}
  \Vert \widetilde{G}(\tau)\Vert_{ L^1\cap L^2}\dd \tau\notag\\
  &\le C\int_0^t\langle t-\tau\rangle^{-\frac{N+2(1-\alpha)}{2(2-\alpha)}}
  \langle \tau\rangle^{-\frac{N+2\alpha}{2(2-\alpha)}}\big( EU_1+EU_\alpha\big)(\tau)\dd \tau .
\end{align}
Inserting \eqref{eq.G11.int.decay}-\eqref{eq.G21.F.decay}
and \eqref{eq.G12.G.decay}-\eqref{eq.G22.G.decay} into \eqref{eq.sigma.L2.decay}-\eqref{eq.v.L2.decay}, we have
\begin{align}\label{eq.sigma.L2.decay1}
  \Vert (a,v)(t)\Vert_{L^2}
  \le C\langle t\rangle^{-\frac{N}{2(2-\alpha)}}
  \Big(\Vert (a_0,u_0)\Vert_{L^1\cap L^2} + \int_0^tW_1(t,\tau)\big(U_0 + U_\alpha + U_1\big)(\tau) E(\tau)\dd \tau\Big).
\end{align}



Therefore, noticing that $u=\mathbb{P}u-\nabla\Lambda^{-1} v$ and $U_\alpha\le CU_0^{1-\alpha}U_1^{\alpha}$,
we obtain the inequality \eqref{eq.U0} by combining \eqref{eq.Pu.L2.decay1} with \eqref{eq.sigma.L2.decay1}.
\vskip1mm

\noindent \textbf{Step 2}: Estimation of $U_1(t)$. We shall show that
\begin{align}\label{es:U1}
  U_1(t)\le C\Vert (a_0,u_0)\Vert_{ L^1\cap H^1 } + C\int_0^tW_2(t,\tau)U_1(\tau)E(\tau)\dd \tau,
\end{align}
where the weight function $W_2$ is given by
\begin{align*}
  W_2(t,\tau) \triangleq \langle t-\tau\rangle^{-\min\{\frac{N+2\alpha}{2\alpha},\frac{N+2}{2(2-\alpha)}\}}\langle \tau\rangle^{-\frac{N+2}{2(2-\alpha)}}
  \langle t\rangle^{\frac{N+2}{2(2-\alpha)}}.
\end{align*}

We first consider the $\dot H^1$-estimate of $\mathbb{P}u$ which satisfies equation \eqref{eq.Pu.decay}.
It follows from \eqref{eq:heat.decay} that
\begin{align}\label{eq.Pu1.H1.decay}
  \Vert \Lambda e^{-\mu t\Lambda^{\alpha}}\mathbb{P}u_0\Vert_{L^2}
  &\le C\langle t\rangle^{-\frac{N+2}{2\alpha}}\Vert u_0\Vert_{L^1\cap \dot{H}^1}.
\end{align}
By virtue of Lemmas \ref{lem:Leibniz} and \ref{lem:prod-es}, we have
\begin{align}\label{eq:Pu.H1.decay}
  \Vert \widetilde{H}\Vert_{\dot{H}^1}
  &\le C(\Vert\Lambda(u\Lambda^{\alpha}a)\Vert_{L^2}+\Vert\Lambda^{1+\alpha}(a u)\Vert_{L^2}
  +\Vert u\cdot\nabla u\Vert_{\dot{H}^1})\notag\\
  &\le C(\Vert u\Vert_{L^\infty}\Vert a\Vert_{\dot{H}^{1+\alpha}}+\Vert u\Vert_{\dot{H}^{1+\alpha}}
  \Vert a\Vert_{L^\infty}+\Vert u\Vert_{L^\infty}\Vert u\Vert_{\dot{H}^{2}}).
\end{align}
For $N\ge3$, by using the interpolation inequality (e.g. see \cite[Proposition 2.22]{bahouri2011fourier})
\begin{align*}
  \Vert u\Vert_{L^\infty}\le C\Vert u\Vert_{\dot{H}^{s_1}}^{\frac{s_2-N/2}{s_2-s_1}}
  \Vert u\Vert_{\dot{H}^{s_2}}^{\frac{N/2-s_1}{s_2-s_1}},
  \qquad \textrm{for}\;0\le s_1< \tfrac{N}{2}<s_2,
\end{align*} we have
\begin{align*}
  \Vert u\Vert_{L^\infty(\R^N)} \Vert a\Vert_{\dot{H}^{1+\alpha}(\R^N)}
  & \le C\Vert u\Vert_{\dot{H}^1}^{\frac{2}{N}}\Vert u\Vert_{\dot{H}^{N/2+1}}^{1-\frac{2}{N}}\Vert a\Vert_{\dot{H}^{1}}^{1-\frac{2\alpha}{N}}
  \Vert a\Vert_{\dot{H}^{N/2+1}}^{\frac{2\alpha}{N}}   \\
  & \leq C \Big(\|u\|_{\dot H^1}^{\frac{2\alpha}{N}} \|a\|_{\dot H^1}^{1-\frac{2\alpha}{N}} \Big)
  \Big(\|u\|_{\dot H^1}^{\frac{2-2\alpha}{N}} \|u\|_{\dot H^{\frac{N}{2}+1}}^{1-\frac{2}{N}} \|a\|_{\dot H^{\frac{N}{2}+1}}^{\frac{2\alpha}{N}} \Big) \\
  & \le C\Vert (a,u)\Vert_{\dot{H}^1(\R^N)}\Vert (a,u)\Vert_{{H}^s(\R^N)}.
\end{align*}
Similarly, we can estimate the other terms in the right side of \eqref{eq:Pu.H1.decay} and get
\begin{align}\label{eq:Pu.H1.decay.1}
  \Vert \widetilde{H}\Vert_{\dot{H}^1(\R^N)}
  &\le C\Vert (a,u)\Vert_{\dot{H}^1(\R^N)} \Vert (a,u)\Vert_{{H}^s(\R^N)}.
\end{align}
For $N=2$, we claim that
\begin{align*}
  \Vert u\Vert_{L^\infty(\R^2)}\le C\Vert u\Vert_{\dot{H}^1(\R^2)}^{1-\epsilon}\Vert u\Vert_{{H}^s(\R^2)}^{\epsilon},
  \quad \forall 0<\epsilon\ll 1.
\end{align*}
In fact, using the interpolation inequality and taking $\theta= \frac{(2s-2)\epsilon}{s}\in (0,1)$, we have
\begin{align*}
  \Vert u\Vert_{L^\infty(\R^2)}&\le C \Vert u\Vert_{\dot{H}^{1-\theta}(\R^2)}^{1/2}\Vert u\Vert_{\dot{H}^{1+\theta}(\R^2)}^{1/2}\\
  &\le C \Big (\Vert u\Vert_{L^2}^\theta \Vert u\Vert_{\dot{H}^1}^{1-\theta}\Big)^{1/2}
  \Big(\Vert u\Vert_{\dot{H}^1}^{1-\frac{\theta}{s-1}} \Vert u\Vert_{\dot{H}^s}^{\frac{\theta}{s-1}}\Big)^{1/2}\\
  &\le C \Vert u\Vert_{{H}^s(\R^2)}^{\frac{\theta}{2}+\frac{\theta}{2s-2}} \Vert u\Vert_{\dot{H}^1(\R^2)}^{1-\frac{\theta}{2}-\frac{\theta}{2s-2}}
  =C\Vert u\Vert_{\dot{H}^1(\R^2)}^{1-\epsilon}\Vert u\Vert_{{H}^s(\R^2)}^\epsilon.
\end{align*}
Thus, it is immediate to show that for every $0<\epsilon <1-\alpha$ and $N=2$,
\begin{align*}
  \|u\|_{L^\infty(\R^2)} \|a\|_{\dot H^{1+\alpha}(\R^2)}
  & \leq C \Big(\|u\|_{\dot H^1}^\alpha \|a\|_{\dot H^1}^{1-\alpha}\Big) \Big(\|u\|_{\dot H^1}^{1-\epsilon -\alpha} \|u\|_{H^s}^\epsilon
  \|a\|_{\dot H^{N/2+1}}^\alpha \Big) \\
  & \leq C \|(u,a)\|_{\dot H^1(\R^2)} \|(u,a)\|_{H^s(\R^2)},
\end{align*}
and by treating the remaining terms in a similar way,
we can also get the estimate \eqref{eq:Pu.H1.decay.1} for $N=2$.
On the other hand, we split $\widetilde{H}$ into $\widetilde{H}=\widetilde{H}_1+\widetilde{H}_2$, with
\begin{align}\label{def:tildH1H2}
  \widetilde{H}_1\triangleq \mu \big( u\Lambda^{\alpha}a-\Lambda^{\alpha}(a u)\big),\quad \textrm{and}\quad
  \widetilde{H}_2 \triangleq -u\cdot\nabla u.
\end{align}
By virtue of Lemmas \ref{lem:Leibniz} and the interpolation inequality, we infer that
\begin{align}\label{eq:LambdaPu.L1.decay}
  \Vert \Lambda^{1-\alpha}\widetilde{H}_1\Vert_{L^1}
  &\le C\Vert\Lambda^{1-\alpha}(u\Lambda^{\alpha}a)\Vert_{L^1}+ C\Vert\Lambda(a u)\Vert_{L^1}\notag\\
  &\le C \Big(\Vert u\Vert_{\dot{H}^{1-\alpha}}\Vert a\Vert_{\dot{H}^{\alpha}}+\Vert u\Vert_{L^2}\Vert a\Vert_{\dot{H}^{1}}
  +\Vert u\Vert_{\dot{H}^{1}}\Vert a\Vert_{L^2}\Big) \nonumber \\
  & \le C\Vert(a,u)\Vert_{L^2}\Vert (a,u)\Vert_{\dot{H}^{1}},
\end{align}
and
\begin{align}\label{eq:tildeH2.L1.decay}
  \Vert \widetilde{H}_2\Vert_{L^1} = \|u\cdot \nabla u\|_{L^1}
  \le \Vert u\Vert_{L^2}\Vert u\Vert_{\dot{H}^{1}}.
\end{align}
Using the above inequalities \eqref{eq:Pu.H1.decay.1}, \eqref{eq:LambdaPu.L1.decay},
\eqref{eq:tildeH2.L1.decay} and \eqref{eq.tildeH.L1}, we argue as \eqref{eq.Pu2.decay} to derive
\begin{align*}
&\quad\int_0^t\Vert \Lambda e^{-\mu (t-\tau)\Lambda^{\alpha}}  \mathbb{P}\widetilde{H}(\tau)\Vert_{L^2}\dd \tau\\
  &\le C\int_0^t\langle t-\tau\rangle^{-\frac{N+2\alpha}{2\alpha}}
  \Vert \Lambda^{1-\alpha}\widetilde{H}_1(\tau)\Vert_{\dot{H}^{\alpha}\cap L^1}\dd \tau+C\int_0^t\langle t-\tau\rangle^{-\frac{N+2}{2\alpha}}
  \Vert \widetilde{H}_2(\tau)\Vert_{\dot{H}^{1}\cap L^1}\dd \tau\notag\\
  &\le C\int_0^t\langle t-\tau\rangle^{-\min\big\{\frac{N+2\alpha}{2\alpha},\frac{N+2}{2(2-\alpha)}\big\}}
  \langle \tau\rangle^{-\frac{N+2}{2(2-\alpha)}}U_1(\tau)E(\tau)\dd \tau.
\end{align*}
Combining the above estimate with \eqref{eq.Pu1.H1.decay}, \eqref{eq.Pu.decay} leads to
\begin{align}\label{eq:Pu-H1decay}
  \|\mathbb{P}u(t)\|_{\dot H^1} \leq  C\langle t\rangle^{-\frac{N+2}{2\alpha}}\Vert u_0\Vert_{L^1\cap \dot{H}^1}
  + C\langle t\rangle^{-\frac{N+2}{2(2-\alpha)}}
  \int_0^tW_2(t,\tau) U_1(\tau)E(\tau)\dd \tau.
\end{align}

Next we consider the $\dot H^1$-estimate of compressible part $(a,v)$ which satisfies \eqref{eq.CP}.
Thanks to Lemma \ref{lem:GM.L2}, we have
\begin{align*}
  \Vert\vert\xi\vert\widehat{\mathcal{G}}_{11}(t)\widehat{a_0}\Vert_{L^2}
  \le C \langle t\rangle^{-\frac{N+2}{2(2-\alpha)}}\Vert a_0\Vert_{L^1\cap \dot{H}^1},
\end{align*}
\begin{align*}
  \Vert\vert\xi\vert\widehat{\mathcal{G}}_{12}(t)(\widehat{v_0},\widehat{a_0})\Vert_{L^2}
  \le \Vert\vert\xi\vert\widehat{\mathcal{G}}_{12}(t) (\widehat{u_0},\widehat{a_0}) \Vert_{L^2}
  \le C \langle t\rangle^{-\frac{N+2(2-\alpha)}{2(2-\alpha)}}\Vert (u_0, a_0)\Vert_{L^1\cap \dot{H}^1},
\end{align*}
\begin{align*}
  \Vert\vert\xi\vert\widehat{\mathcal{G}}_{22}(t)\widehat{v_0}\Vert_{L^2}
  \le \Vert\vert\xi\vert\widehat{\mathcal{G}}_{22}(t)\widehat{u_0}\Vert_{L^2}
  \le C \langle t\rangle^{-\frac{N+2(2-\alpha)}{2(2-\alpha)}}\Vert u_0\Vert_{L^1\cap \dot{H}^1}.
\end{align*}
Recall that $F= \Div (a\, u)$ and $G = \Lambda^{-1} \Div \widetilde{G}$ with $\widetilde{G}$ given by
\eqref{def:tild-G}. By applying Lemmas \ref{lem:Leibniz}, \ref{lem:prod-es} and \ref{Lem:composite},
and arguing as estimating the terms on the right-hand side of \eqref{eq:Pu.H1.decay}, we see that
\begin{align}\label{eq.F.H1.Decay}
  \Vert F\Vert_{\dot{H}^1}
  &\le C \big(\Vert u\cdot\nabla a\Vert_{\dot{H}^1}+\Vert a\Div u\Vert_{\dot{H}^1}\big)\notag\\
  &\le C \big(\Vert u\Vert_{L^\infty}\Vert a\Vert_{\dot{H}^2}+\Vert a\Vert_{L^\infty}\Vert u\Vert_{\dot{H}^2}\big)
  \le C\Vert (a,u)\Vert_{\dot{H}^1}\Vert (a,u)\Vert_{{H}^s},
\end{align}
and
\begin{align}\label{eq.tildeG.H1.Decay}
  \Vert \widetilde{G}\Vert_{\dot{H}^1}
  &\le C \Vert \widetilde{H}\Vert_{\dot{H}^1}+C\Vert \big((a+1)^{\gamma-2}-1\big)\nabla a\Vert_{\dot{H}^1}\notag\\
  &\le C \Big(\Vert u\Vert_{L^\infty}\Vert a\Vert_{\dot{H}^{1+\alpha}}
  +\Vert u\Vert_{\dot H^{1+\alpha}}\Vert a\Vert_{L^\infty}
  +\Vert u\Vert_{L^\infty}\Vert u\Vert_{\dot{H}^{2}}+\Vert a\Vert_{L^\infty}\Vert a\Vert_{\dot{H}^{2}} + \|a\|_{\dot H^1}^2 \Big)\notag\\
  &\le C\Vert (a,u)\Vert_{\dot{H}^1}\Vert (a,u)\Vert_{{H}^s}.
\end{align}
We also split $\widetilde{G}$ into $\widetilde{G}=\widetilde{H}_1+\widetilde{G}_2$, with $\widetilde{H}_1$ given by \eqref{def:tildH1H2} and
$\widetilde{G}_2\triangleq - u\cdot \nabla u -\gamma\big((1+a)^{\gamma-2}-1\big)\nabla a$, and we easily find that
\begin{align}\label{eq:tildeG2.L1.decay}
  \Vert \widetilde{G}_2\Vert_{L^1}
  \le C\big(\Vert u\Vert_{L^2}\Vert u\Vert_{\dot{H}^{1}}+\Vert a\Vert_{L^2}\Vert a\Vert_{\dot{H}^{1}}\big).
\end{align}
The inequalities \eqref{eq.F.H1.Decay}-\eqref{eq:tildeG2.L1.decay} and  \eqref{eq:LambdaPu.L1.decay} combined with Lemma \ref{lem:GM.L2} gives
\begin{align*}
  & \quad \int_0^t\Vert \big(\vert\xi\vert\widehat{\mathcal{G}}_{11}(t-\tau)\widehat{F}(\tau),
  \vert \xi\vert \widehat{\mathcal{G}}_{21}(t-\tau)\widehat{F}(\tau)\big)\Vert_{L^2}\dd \tau
  \le C\int_0^t\langle t-\tau\rangle^{-\frac{N+2}{2(2-\alpha)}}\Vert F(\tau)\Vert_{\dot{H}^1\cap L^1}\dd \tau\notag\\
  & \leq C \int_0^t\langle t-\tau\rangle^{-\frac{N+2}{2(2-\alpha)}} \Big(\|(a,u)\|_{L^2}
  \|(\nabla a,\nabla u)\|_{L^2} + \Vert (a,u)\Vert_{\dot{H}^1}\Vert (a,u)\Vert_{{H}^s}\Big) \dd \tau \\
  &\le C\int_0^t\langle t-\tau\rangle^{-\frac{N+2}{2(2-\alpha)}}\langle \tau\rangle^{-\frac{N+2}{2(2-\alpha)}}U_1(\tau)E(\tau)\dd \tau,
\end{align*}
and
\begin{align*}
  & \quad \int_0^t\Vert \big(\vert\xi\vert\widehat{\mathcal{G}}_{12}(t-\tau)\widehat{G}(\tau),
  \vert\xi\vert\widehat{\mathcal{G}}_{22}(t-\tau)\widehat{G}(\tau) \big)\Vert_{L^2}\dd \tau\\
  &\le C\int_0^t\langle t-\tau\rangle^{-\frac{N+2}{2(2-\alpha)}}
  \Vert \Lambda^{1-\alpha}\widetilde{H}_1(\tau)\Vert_{\dot{H}^{\alpha}\cap L^1}\dd \tau
  +C\int_0^t\langle t-\tau\rangle^{-\frac{N+4-2\alpha}{2(2-\alpha)}}
  \Vert \widetilde{G}_2(\tau)\Vert_{\dot{H}^1\cap L^1}\dd \tau\notag\\
  & \leq C\int_0^t\langle t-\tau\rangle^{-\frac{N+2}{2(2-\alpha)}}
  \Big(\|(a,u)\|_{L^2} \|( a, u)\|_{\dot{H}^1}
  + \Vert (a,u)\Vert_{\dot{H}^1}\Vert (a,u)\Vert_{{H}^s} \Big) \dd \tau \\
  &\le C\int_0^t\langle t-\tau\rangle^{-\frac{N+2}{2(2-\alpha)}}
  \langle \tau\rangle^{-\frac{N+2}{2(2-\alpha)}}U_1(\tau)E(\tau)\dd \tau.
\end{align*}
Gathering the above estimates yields
\begin{align}\label{eq.sigma.H1.decay1}
  \Vert (a,v)(t)\Vert_{\dot H^1}
  \le C\langle t\rangle^{-\frac{N+2}{2(2-\alpha)}}
  \Big(\Vert (a_0,u_0)\Vert_{L^1\cap \dot H^1} + \int_0^tW_2(t,\tau)U_1(\tau)E(\tau)\dd \tau\Big).
\end{align}

Hence, combining \eqref{eq:Pu-H1decay} with \eqref{eq.sigma.H1.decay1} completes the proof of \eqref{es:U1}.
\vskip1mm

\noindent \textbf{Step 3}: Estimation of $V(t)$. We will prove that
\begin{align}\label{es:Vt}
  V(t)\le C\Vert (a_0,u_0)\Vert_{H^s \cap L^1} + C \big(U_0+U_1\big)^2(t) + C\int_0^tW_3(t,\tau) E(\tau) V(\tau)\dd \tau,
\end{align}
where the weight function $W_3$ is given by
\begin{align*}
  W_3(t,\tau) \triangleq \langle t-\tau\rangle^{-\min\big\{\frac{N}{\alpha},\frac{N}{2-\alpha}\big\}}
  \langle \tau\rangle^{-\frac{N}{2-\alpha}} \langle t\rangle^{\frac{N}{2-\alpha}}.
\end{align*}

For $\mathbb{P}u$ solving the equation \eqref{eq.Pu.decay}, in view of Lemma \ref{lem:heat.decay.L1}, we have
\begin{align*}
  \Vert e^{-\mu t\vert\xi\vert^{\alpha}}\widehat{\mathbb{P}u_0}\Vert_{L^1}
  \le C\langle t\rangle^{-\frac{N}{\alpha}} \big(\Vert {u_0}\Vert_{L^1}+\Vert\widehat{u_0}\Vert_{L^1} \big).
\end{align*}
Taking advantage of Young's inequality, we estimate $\widetilde{H}$ (given by \eqref{def:tild-H}) as follows:
\begin{align}\label{eq:FHtild-L1}
  \Vert \mathcal{F}(\widetilde{H}) \Vert_{L^1}
  &\le C \big(\Vert\widehat{u\Lambda^{\alpha}a}(\xi)-\vert\xi\vert^\alpha\widehat{a u}(\xi)\Vert_{L^1}
  +\Vert \widehat{u\cdot\nabla u}\Vert_{L^1} \big) \notag \\
  &\le C \bigg( \Big\Vert\int_{\R^N} \widehat{u}(\xi-\eta)(\vert\eta\vert^\alpha
  -\vert\xi\vert^\alpha)\widehat{a}(\eta)\dd \eta \Big\Vert_{L^1_\xi}
  + \Vert \widehat{u}*\widehat{\nabla u}\Vert_{L^1} \bigg) \notag \\
  &\le C \bigg( \Big\Vert\int_{\R^N} \widehat{u}(\xi-\eta)
  \vert\xi -\eta\vert^\alpha \widehat{a}(\eta)\dd \eta \Big\Vert_{L^1_\xi}
  +\Vert \widehat{u}\Vert_{L^1}\Vert \vert\xi\vert \widehat{u}(\xi)\Vert_{L^1_\xi} \bigg) \notag \\
  & \le C \big(\Vert\vert\xi\vert^\alpha\widehat{u}\Vert_{L^1}\Vert \widehat{a} \Vert_{L^1}
  + \Vert \widehat{u}\Vert_{L^1}\Vert \vert\xi\vert \widehat{u}\Vert_{L^1}\big)
  \le C (\Vert \widehat{a}\Vert_{L^1}+\Vert \widehat{u}\Vert_{L^1})\Vert {u}\Vert_{H^s},
\end{align}
where in the last inequality we have used the fact that
\begin{align}\label{eq:fact}
  \|(1+|\xi|)\widehat{u}\|_{L^1} \leq C \|(1+|\xi|)^{1-s}\|_{L^2} \|u\|_{H^s} \leq C \|u\|_{H^s}.
\end{align}
By using\eqref{eq.tildeH.L1}, \eqref{eq:FHtild-L1} and Lemma \ref{lem:heat.decay.L1}, we find
\begin{align*}
  \int_0^t\Vert e^{-\mu (t-\tau)\vert\xi\vert^{\alpha}}\widehat{\mathbb{P}\widetilde{H}}(\tau)\Vert_{L^1}\dd \tau
  &\le C\int_0^t\langle t-\tau\rangle^{-\frac{N}{\alpha}} \big(\Vert \widetilde{H}(\tau)\Vert_{L^1}
  +\Vert \mathcal{F}(\widetilde{H})(\tau)\Vert_{L^1}\big) \dd \tau\notag\\
  &\le C\int_0^t\langle t-\tau\rangle^{-\frac{N}{\alpha}}
  \langle \tau\rangle^{-\frac{N}{2-\alpha}}\big(U_0U_1+U_0 U_\alpha+E V\big)(\tau)\dd \tau .
\end{align*}
Combining the above estimates with \eqref{eq.Pu.decay} leads to
\begin{align*}
  \|\widehat{\mathbb{P} u}(t)\|_{L^1} \leq
  C \langle t\rangle^{-\frac{N}{2-\alpha}}
  \Big( \|u_0\|_{L^1\cap H^s} +  \int_0^tW_3(t,\tau)\big(U_0U_1+U_0U_\alpha+E V\big)(\tau)\dd \tau\Big).
\end{align*}

Let us consider the estimation of $(a,v)$ which solves \eqref{eq.CP}.
For the term of initial data, by virtue of Lemma \ref{lem:GM.L1}, we get
\begin{align*}
  \Vert\widehat{\mathcal{G}}_{11}(t)\widehat{a_0}\Vert_{L^1}
  \le C \langle t\rangle^{-\frac{N}{2-\alpha}} \big(\Vert a_0\Vert_{L^1}+\Vert\widehat{a_0}\Vert_{L^1}\big),
\end{align*}
\begin{align*}
  \Vert\widehat{\mathcal{G}}_{12}(t)\widehat{v_0}\Vert_{L^1} +\Vert\widehat{\mathcal{G}}_{21}(t)\widehat{a_0}\Vert_{L^1}
  \le C \langle t\rangle^{-\frac{N+1-\alpha}{2-\alpha}} \big(\Vert (a_0,u_0)\Vert_{L^1}
  + \Vert(\widehat{a_0},\widehat{u_0})\Vert_{L^1}\big),
\end{align*}
\begin{align*}
  \Vert\widehat{\mathcal{G}}_{22}(t)\widehat{v_0}\Vert_{L^1}
  \le C\Vert\widehat{\mathcal{G}}_{22}(t)\widehat{u_0}\Vert_{L^1}
  \le C \langle t\rangle^{-\frac{N+1-\alpha}{2-\alpha}} \big(\Vert u_0\Vert_{L^1}+\Vert\widehat{u_0}\Vert_{L^1}\big).
\end{align*}
Owing to Young's inequality and \eqref{eq:fact}, we see that $F= \Div(a\,u)$ satisfies
\begin{align}\label{eq.hatF.L1.Decay}
  \Vert \widehat{F}\Vert_{L^1}
  &\le C \big(\Vert \widehat{u\cdot\nabla a}\Vert_{L^1}+\Vert \widehat{a\Div u}\Vert_{L^1}\big)
  \leq C \big(\Vert \widehat{u}\Vert_{L^1}\Vert \vert\xi\vert\widehat{a}\Vert_{L^1}
  +\Vert  \vert\xi\vert  \widehat{u}\Vert_{L^1}\Vert\widehat{a}\Vert_{L^1}\big)\notag\\
  &\leq C \Vert (\widehat{u},\widehat{a})\Vert_{L^1} \Vert (a, u)\Vert_{H^s}.
\end{align}
Similarly, noting that $\Vert \mathcal{F}((a+1)^{\gamma-2}-1)\Vert_{L^1}\le C\Vert \widehat{a}\Vert_{L^1}$
(from Lemma \ref{Lem:composite}), the term $G$ given by \eqref{def:G} has
\begin{align}\label{eq.hatG.L1.Decay}
  \Vert \widehat{G}\Vert_{L^1}
  &\le C \big(\Vert\vert\xi\vert^\alpha\widehat{u}\Vert_{L^1}\Vert \widehat{a}\Vert_{L^1}+\Vert \widehat{u}\Vert_{L^1}
  \Vert \vert\xi\vert \widehat{u}\Vert_{L^1}+\Vert \widehat{a}\Vert_{L^1}\Vert \vert\xi\vert \widehat{a}\Vert_{L^1}\big)\notag \\
  &\le C\Vert (\widehat{u},\widehat{a})\Vert_{L^1} \Vert (a, u)\Vert_{H^s}.
\end{align}
The above inequalities combined with  \eqref{eq.tildeG.L1.Decay} and Lemma \ref{lem:GM.L1} give
\begin{align*}
  &\quad\int_0^t\Vert \big(\widehat{\mathcal{G}}_{11}(t-\tau)\widehat{F}(\tau),
  \widehat{\mathcal{G}}_{21}(t-\tau)\widehat{F}(\tau)\big)\Vert_{L^1}\dd \tau\\
  &\le C\int_0^t\langle t-\tau\rangle^{-\frac{N}{2-\alpha}}(\Vert F(\tau)\Vert_{L^1}
  +\Vert\widehat{F}(\tau)\Vert_{L^1})\dd \tau\notag\\
  &\le C\int_0^t\langle t-\tau\rangle^{-\frac{N}{2-\alpha}}\langle \tau\rangle^{-\frac{N}{2-\alpha}}\big(U_0U_1+E V\big)(\tau)\dd \tau,
\end{align*}
and
\begin{align*}
 &\quad \int_0^t\Vert \big(\widehat{\mathcal{G}}_{12}(t-\tau)\widehat{G}(\tau),
  \widehat{\mathcal{G}}_{22}(t-\tau)\widehat{G}(\tau)\big)\Vert_{L^1}\dd \tau\\
  &\le C\int_0^t\langle t-\tau\rangle^{-\frac{N+1-\alpha}{2-\alpha}}
  (\Vert \widetilde{G}(\tau)\Vert_{L^1}+\Vert\widehat{G}(\tau)\Vert_{L^1})\dd \tau\notag\\
  &\le C\int_0^t\langle t-\tau\rangle^{-\frac{N}{2-\alpha}}\langle \tau\rangle^{-\frac{N}{2-\alpha}}\big(U_0U_1+U_0U_\alpha+E V\big)(\tau)\dd \tau.
\end{align*}

Hence, gathering the above estimates completes the proof of \eqref{es:Vt}.
\vskip1mm

\noindent \textbf{Step 4}: Proof of the decay estimates \eqref{eq.decay1}-\eqref{eq.decay3}.
According to \eqref{eq.U0} and \eqref{es:U1}, we have
\begin{align}
  (U_0+U_1)(t) \le C\Vert (a_0,u_0)\Vert_{H^s\cap L^1} + C\int_0^t(W_1(t,\tau)+W_2(t,\tau))E(\tau)(U_0+U_1)(\tau)\dd\tau.
\end{align}
Gronwall's inequality guarantees that
\begin{align*}
  (U_0+U_1)(t)\leq C\exp\Big(E(t)\int_0^t(W_1(t,\tau)+W_2(t,\tau))\dd\tau\Big).
\end{align*}
By virtue of Lemma \ref{lem:convolution} and noting that $\frac{N+2\alpha}{2\alpha}\geq \frac{n+2}{2(2-\alpha)}$ for every $\alpha\in(0,1]$
and $N\geq 2$, we have
\begin{align*}
  \sup_{0\le t<+\infty}\int_0^t \big(W_1(t,\tau)+W_2(t,\tau)\big)\dd\tau<+\infty.
\end{align*}
Therefore, we see that $(U_0+U_1)(t) <\infty$ for all $t\in \R_+$.
Inserting this estimate into \eqref{es:Vt} and using Gronwall's inequality, Lemma \ref{lem:convolution} again and the fact that $\Vert f\Vert_{L^\infty}\le \Vert \widehat{f}\Vert_{L^1}$,
we conclude the decay results \eqref{eq.decay1}-\eqref{eq.decay3}.
\end{proof}

\subsection{Refined upper bounds for the velocity field}
In this subsection we shall prove a refined decay estimate for the velocity field $u$.

\begin{proposition}\label{Prop:upper.refined}
Under the assumptions of Proposition \ref{Prop:upper.rough}, we have
\begin{equation}\label{eq.decay.refined}
\begin{aligned}
  &\Vert u(t)\Vert_{L^2}\le C\langle t\rangle^{-\frac{N+2(1-\alpha)}{2(2-\alpha)}},\quad
  \Vert \Lambda^\alpha u(t)\Vert_{L^2}\le C\langle t\rangle^{-\frac{N+2}{2(2-\alpha)}}.
\end{aligned}
\end{equation}
In addition, we have for $\alpha\in[\frac{2N}{3N+2},1]$,
\begin{align}\label{eq.decay.refined2}
  &\Vert \mathbb{P}u(t) \Vert_{L^2} \le C \langle t\rangle^{-\frac{N}{2\alpha}}.
\end{align}
\end{proposition}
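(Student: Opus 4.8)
The strategy is to reinject the rough decay rates of Proposition~\ref{Prop:upper.rough} into the Duhamel formulas \eqref{eq.CP} and \eqref{eq.Pu.decay}, exploiting two features of the linearized dynamics: the off-diagonal Green kernels gain an extra factor $\langle t\rangle^{-\frac{1-\alpha}{2-\alpha}}$ over $\widehat{\mathcal{G}}_{11}$, namely $\||\xi|^{s}\widehat{\mathcal{G}}_{21}\widehat f\|_{L^2}\lesssim\langle t\rangle^{-\frac{N+2s+2(1-\alpha)}{2(2-\alpha)}}\|f\|_{L^1\cap\dot H^s}$ (Lemma~\ref{lem:GM.L2}); and the nonlinearities $F=-\Div(au)$, $\widetilde G$, $\widetilde H$ are quadratic, so the rough rates already make them integrable enough in time for the convolution Lemma~\ref{lem:convolution} to preserve the target rates. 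Throughout, $u=\mathbb{P}u-\nabla\Lambda^{-1}v$ with $v=\Lambda^{-1}\Div u$; $\mathbb{P}u$ is estimated through the fractional heat semigroup (Lemma~\ref{lem:heat.decay}) and $(a,v)$ through the Green matrix.

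\textbf{Step 1} ($L^2$ and $\Lambda^\alpha$ bounds for $u$). First I would upgrade $\|\mathbb{P}u\|_{L^2}$: in \eqref{eq.Pu.decay} the linear term decays like $\langle t\rangle^{-N/2\alpha}$ by Lemma~\ref{lem:heat.decay}, and $\frac{N}{2\alpha}\ge\frac{N+2(1-\alpha)}{2(2-\alpha)}$ for $\alpha\le1$, $N\ge2$; for the Duhamel term, bounding $\|\Lambda^\alpha u\|_{L^2}$ by interpolation of the rough rates gives $\|\widetilde H(\tau)\|_{L^1\cap L^2}\lesssim\langle\tau\rangle^{-\frac{N+\alpha}{2-\alpha}}$ via \eqref{eq.tildeH.L1}, and Lemma~\ref{lem:convolution} yields $\int_0^t\langle t-\tau\rangle^{-N/2\alpha}\langle\tau\rangle^{-\frac{N+\alpha}{2-\alpha}}\dd\tau\lesssim\langle t\rangle^{-\frac{N+2(1-\alpha)}{2(2-\alpha)}}$. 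Next I would treat $v$ via \eqref{eq.CP}: the data terms $\widehat{\mathcal{G}}_{21}\widehat{a_0}$, $\widehat{\mathcal{G}}_{22}\widehat{v_0}$ decay at rates $\langle t\rangle^{-\frac{N+2(1-\alpha)}{2(2-\alpha)}}$ and $\langle t\rangle^{-\min\{N/2\alpha,\,(N+4-4\alpha)/2(2-\alpha)\}}$, while the source terms, bounded by the rough rates ($\|au\|_{L^1\cap\dot H^1}\lesssim\langle\tau\rangle^{-\frac{N}{2-\alpha}}$, $\|\widetilde G\|_{L^1\cap L^2}\lesssim\langle\tau\rangle^{-\frac{N+\alpha}{2-\alpha}}$ by Lemmas~\ref{lem:commutator} and~\ref{Lem:composite}), produce through Lemmas~\ref{lem:GM.L2} and~\ref{lem:convolution} contributions decaying at least at the target rate. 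Hence $\|u\|_{L^2}\le\|\mathbb{P}u\|_{L^2}+\|v\|_{L^2}\lesssim\langle t\rangle^{-\frac{N+2(1-\alpha)}{2(2-\alpha)}}$. The $\Lambda^\alpha u$ estimate follows identically with an extra $\Lambda^\alpha$: here $\||\xi|^\alpha\widehat{\mathcal{G}}_{21}\widehat{a_0}\|_{L^2}\lesssim\langle t\rangle^{-\frac{N+2}{2(2-\alpha)}}$ and $\|\Lambda^\alpha e^{-\mu t\Lambda^\alpha}\mathbb{P}u_0\|_{L^2}\lesssim\langle t\rangle^{-(N+2\alpha)/2\alpha}$, both $\ge\frac{N+2}{2(2-\alpha)}$ for $\alpha\le1$, whereas the nonlinear pieces — now fed the improved $\|u\|_{L^2}$, and with the commutator term $\widetilde H_1=\mu(u\Lambda^\alpha a-\Lambda^\alpha(au))$ from \eqref{def:tildH1H2} controlled by Lemmas~\ref{lem:Leibniz},~\ref{lem:commutator},~\ref{Lem:composite} plus interpolation between $\dot H^1$ and $L^2$ — close strictly faster. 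This gives \eqref{eq.decay.refined}.

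\textbf{Step 2} (refined $\mathbb{P}u$ bound). For $\alpha\in[\frac{2N}{3N+2},1]$ I return to \eqref{eq.Pu.decay}: the linear part is $\lesssim\langle t\rangle^{-N/2\alpha}$, and feeding the \emph{improved} rates from Step~1 ($\|a\|_{L^2}\lesssim\langle\tau\rangle^{-\frac{N}{2(2-\alpha)}}$, $\|\Lambda^\alpha u\|_{L^2},\|\nabla u\|_{L^2}\lesssim\langle\tau\rangle^{-\frac{N+2}{2(2-\alpha)}}$, $\|u\|_{L^2}\lesssim\langle\tau\rangle^{-\frac{N+2(1-\alpha)}{2(2-\alpha)}}$) into \eqref{eq.tildeH.L1} gives $\|\widetilde H(\tau)\|_{L^1\cap L^2}\lesssim\langle\tau\rangle^{-\frac{N+1}{2-\alpha}}$, so Lemmas~\ref{lem:heat.decay} and~\ref{lem:convolution} yield
\[
  \int_0^t\langle t-\tau\rangle^{-\frac{N}{2\alpha}}\langle\tau\rangle^{-\frac{N+1}{2-\alpha}}\dd\tau\lesssim\langle t\rangle^{-\frac{N}{2\alpha}},
\]
which holds precisely when $\frac{N+1}{2-\alpha}\ge\frac{N}{2\alpha}$, i.e.\ $\alpha(3N+2)\ge2N$. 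This proves \eqref{eq.decay.refined2}.

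The main obstacle is the bookkeeping in Step~1: one must check that every Duhamel convolution integral — coming from $\widehat{\mathcal{G}}_{21},\widehat{\mathcal{G}}_{22}$ acting on $\widehat F,\widehat G$, from $e^{-\mu(t-\tau)\Lambda^\alpha}\mathbb{P}\widetilde H$, and from the $\Lambda^\alpha$-analogues — produces a rate at least equal to the target, which repeatedly reduces to elementary inequalities such as $\frac{N}{2\alpha}\ge\frac{N+2(1-\alpha)}{2(2-\alpha)}$ (valid for $\alpha\le1$, $N\ge2$) and to careful $\dot H^\alpha\cap L^1$ and $\dot H^1\cap L^1$ estimates of the commutator nonlinearity $\widetilde H_1$. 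The conceptually essential point — that the compressible part of the velocity inherits the slower, density-driven rate $\langle t\rangle^{-\frac{N+2(1-\alpha)}{2(2-\alpha)}}$ through $\widehat{\mathcal{G}}_{21}\widehat{a_0}$, while $\mathbb{P}u$ decays faster — is exactly why \eqref{eq.decay.refined2} needs a restriction on $\alpha$, and the threshold $\alpha=\frac{2N}{3N+2}$ is the balance point of the convolution in Step~2.
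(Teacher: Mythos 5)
Your proposal is correct and follows essentially the same route as the paper: Duhamel formulas for $\mathbb{P}u$ and $(a,v)$, the Green-matrix decay of Lemma \ref{lem:GM.L2} (at order $0$ and $\alpha$) together with Lemma \ref{lem:heat.decay}, the product/commutator bounds on $F$, $\widetilde G$, $\widetilde H$ fed with the rough (then refined) rates, and Lemma \ref{lem:convolution}; in particular your balance condition $\frac{N+1}{2-\alpha}\ge\frac{N}{2\alpha}$, i.e.\ $\alpha\ge\frac{2N}{3N+2}$, is exactly how the paper obtains \eqref{eq.decay.refined2}. The only difference is organizational: the paper phrases the argument through the weighted sup-norms $U_0,U_1,\widetilde U_\alpha,V,E$ rather than plugging in the decay rates of Proposition \ref{Prop:upper.rough} directly, which is equivalent.
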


\begin{proof}[Proof of Proposition \ref{Prop:upper.refined}]
We recalculate the estimates \eqref{eq.Pu2.decay}, \eqref{eq.G21.F.decay} and \eqref{eq.G12.G.decay} as follows
\begin{align}\label{eq.Pu2.decay.R1}
  & \int_0^t\Vert e^{-\mu (t-\tau)\Lambda^{\alpha}} \mathbb{P} \widetilde{H}(\tau)\Vert_{L^2}\dd \tau
  \le C\int_0^t\langle t-\tau\rangle^{-\frac{N}{2\alpha}} \Vert \widetilde{H}(\tau)\Vert_{L^1 \cap L^2} \dd \tau\notag\\
  & \leq C \int_0^t\langle t-\tau\rangle^{-\frac{N}{2\alpha}} \Big( \|\Lambda^\alpha u\|_{L^2} \big(\|a\|_{L^2}
  + \|\widehat{a}\|_{L^1}\big) + \|\nabla u\|_{L^2} \big( \|u\|_{L^2} + \|\widehat{u}\|_{L^1}\big) \Big)\dd\tau \nonumber \\
  &\le C\int_0^t\langle t-\tau\rangle^{-\frac{N}{2\alpha}}
  \langle \tau \rangle^{-\frac{N}{2-\alpha}}  (U_0U_\alpha+U_0U_1 + E V)(\tau)   \dd \tau \notag \\
  &\le C\langle t\rangle^{-\min \big\{\frac{N}{2\alpha},\frac{N}{2-\alpha}\big\}} \big( U_0U_\alpha + U_0U_1 + E V \big)(t),
\end{align}
and
\begin{align}\label{eq.G21.F.decay.R1}
  \int_0^t\Vert\widehat{\mathcal{G}}_{21}(t-\tau)\widehat{F}(\tau) \Vert_{L^2}\dd \tau
  &\le C\int_0^t\langle t-\tau\rangle^{-\frac{N+2(2-\alpha)}{2(2-\alpha)}}
  \Vert (au)(\tau)\Vert_{L^1\cap\dot{H}^1}\dd \tau\notag\\
  &\le C\int_0^t\langle t-\tau\rangle^{-\frac{N+2(2-\alpha)}{2(2-\alpha)}}
  \langle \tau\rangle^{-\frac{N}{2-\alpha}}\dd \tau \big(U_0^2 + E V\big)(t)\notag \\
  &\le C\langle t\rangle^{-\frac{N+2(2-\alpha)}{2(2-\alpha)}} \big(U_0^2+E V\big)(t),
\end{align}
and
\begin{align}\label{eq.G12.G.decay.R1}
  &\int_0^t\Vert \widehat{\mathcal{G}}_{12}(t-\tau)\widehat{G}(\tau)\Vert_{L^2}\dd \tau
  \le C\int_0^t\langle t-\tau\rangle^{-\frac{N+2(1-\alpha)}{2(2-\alpha)}}
  \Vert \widetilde{G}(\tau)\Vert_{L^1\cap L^2} \dd \tau\notag\\
  & \leq C\int_0^t\langle t-\tau\rangle^{-\frac{N+2(1-\alpha)}{2(2-\alpha)}}
  \Big(\|\Lambda^\alpha u\|_{L^2} \|a\|_{L^2} + \|(a,u)\|_{L^2} \|(\nabla a,\nabla u)\|_{L^2}
  + E(\tau) \|(\widehat{a},\widehat{u})\|_{L^1} \Big) \dd \tau\nonumber \\
  &\le C\int_0^t\langle t-\tau\rangle^{-\frac{N+2(1-\alpha)}{2(2-\alpha)}}
  \langle \tau\rangle^{-\frac{N}{2-\alpha}}\dd \tau\, \big(U_0U_1+ U_0U_\alpha+E V\big)(t)\notag \\
  & \le C\langle t\rangle^{-\frac{N+2(1-\alpha)}{2(2-\alpha)}}
  \big(U_0U_1+U_0U_{\alpha}+E V\big)(t).
\end{align}
Noting that $\frac{N+2(1-\alpha)}{2(2-\alpha)} \leq \min\{\frac{N}{2\alpha}, \frac{N}{2-\alpha}\}$,
we can gather the estimates \eqref{eq.Pu1.decay}, \eqref{eq.G12.int.decay}, \eqref{eq.G22.int.decay},  \eqref{eq.Pu2.decay.R1}, \eqref{eq.G21.F.decay.R1} and \eqref{eq.G12.G.decay.R1} to obtain
\begin{align}\label{eq.tildeU0}
  \sup_{0\le \tau\le t}  \langle \tau\rangle^{\frac{N+2(1-\alpha)}{2(2-\alpha)}}\Vert u(\tau)\Vert_{L^2}
  \le C\Vert (a_0,u_0)\Vert_{ L^1\cap L^2} + C(U_0+U_1)^2(t) + CE(t) V(t).
\end{align}

Now denoting by
\begin{align*}
  &\widetilde{U}_\alpha(t)\triangleq \sup_{0\le \tau\le t}
  \langle \tau\rangle^{\frac{N+2}{2(2-\alpha)}}\Vert \Lambda^\alpha u(\tau)\Vert_{L^2},
\end{align*}
we intend to show
\begin{align}\label{eq.tildeUalpha}
  \widetilde{U}_\alpha(t)
  \le C\Vert (a_0,u_0)\Vert_{H^s\cap L^1} + C(U_0+U_1)^2(t) + C E(t) V(t).
\end{align}
Indeed,
using inequalities \eqref{eq.tildeH.L1} and \eqref{eq:Pu.H1.decay} yields
\begin{align}\label{eq.Pu2.Halpha.decay}
  \int_0^t\Vert \Lambda^\alpha e^{-\mu (t-\tau)\Lambda^{\alpha}} \mathbb{P} \widetilde{H}(\tau)\Vert_{L^2}\dd \tau
  &\le C\int_0^t\langle t-\tau\rangle^{-\frac{N+2\alpha}{2\alpha}}\Vert \widetilde{H}(\tau)\Vert_{\dot{H}^1\cap L^1}\dd \tau\notag\\
  &\le C\int_0^t\langle t-\tau\rangle^{-\frac{N+2}{2(2-\alpha)}}
  \langle \tau\rangle^{-\frac{N}{2-\alpha}}\dd \tau \,\big(U_0U_1+U_0U_\alpha+E V\big)(t) \notag \\
  &\le C\langle t\rangle^{-\frac{N+2}{2(2-\alpha)}} \big(U_0 U_1+U_0U_\alpha+E V\big)(t);
\end{align}
and note that Lemma \ref{eq:heat.decay} implies
\begin{align*}
  \|\Lambda^\alpha e^{-\mu t\Lambda^\alpha} \mathbb{P} u_0\|_{L^2} \leq C \langle t\rangle^{-\frac{N+ 2\alpha}{2\alpha}}
  \|u_0\|_{\dot H^\alpha \cap L^1},
\end{align*}
which in combination with \eqref{eq.Pu.decay} and \eqref{eq.Pu2.Halpha.decay} leads to
\begin{align}\label{es:Lamb-Pu-L2}
  \|\Lambda^\alpha \mathbb{P}u(t)\|_{L^2} \leq
  C\langle t\rangle^{-\frac{N+2}{2(2-\alpha)}} \Big( \|u_0\|_{H^s\cap L^1} + \big(U_0 U_1+U_0U_\alpha+E V\big)(t)\Big).
\end{align}
For the estimation of $v$ satisfying \eqref{eq.CP}, Lemma \ref{lem:GM.L2} ensures that
\begin{align*}
  \Vert\vert\xi\vert^\alpha \widehat{\mathcal{G}}_{12}(t)\widehat{a_0}\Vert_{L^2}
  + \Vert\vert\xi\vert^\alpha \widehat{\mathcal{G}}_{22}(t)\widehat{v_0}\Vert_{L^2}
  \le C \langle t\rangle^{-\frac{N+2}{2(2-\alpha)}}\Vert (u_0, a_0)\Vert_{L^1\cap \dot{H}^\alpha};
\end{align*}
thanks to inequalities \eqref{eq.tildeG.L1.Decay}, \eqref{eq.F.H1.Decay}, \eqref{eq.tildeG.H1.Decay} and Lemmas \ref{lem:convolution}, \ref{lem:GM.L2}, we have that for every $\alpha\in (0,1]$,
\begin{align*}
  \int_0^t\Vert\vert\xi\vert^\alpha\widehat{\mathcal{G}}_{21}(t-\tau)\widehat{F}(\tau)\Vert_{L^2}\dd \tau
  &\le C\int_0^t\langle t-\tau\rangle^{-\frac{N+2}{2(2-\alpha)}}\Vert F(\tau)\Vert_{\dot{H}^1\cap L^1}\dd \tau\notag\\
  &\le C\int_0^t\langle t-\tau\rangle^{-\frac{N+2}{2(2-\alpha)}}\langle \tau\rangle^{-\frac{N}{2-\alpha}}\dd \tau
  \big(U_0U_1+E V\big)(t) \notag\\
  &\le C\langle t\rangle^{-\frac{N+2}{2(2-\alpha)}} \big(U_0U_1+E V\big)(t),
\end{align*}
and
\begin{align*}
  \int_0^t\Vert\vert\xi\vert^\alpha\widehat{\mathcal{G}}_{22}(t-\tau)\widehat{G}(\tau)\Vert_{L^2}\dd \tau
  &\le C\int_0^t\langle t-\tau\rangle^{-\frac{N+2}{2(2-\alpha)}}\Vert \widetilde{G}(\tau)\Vert_{\dot{H}^1\cap L^1}\dd \tau\notag\\
  &\le C\int_0^t\langle t-\tau\rangle^{-\frac{N+2}{2(2-\alpha)}}
  \langle \tau\rangle^{-\frac{N}{2-\alpha}}\dd \tau \big(U_0U_1+U_0U_\alpha +E V\big)(t)\notag \\
  &\le C\langle t\rangle^{-\frac{N+2}{2(2-\alpha)}} \big(U_0U_1+U_0U_\alpha +E V\big)(t);
\end{align*}
thus we find
\begin{align}\label{eq.sigma.Halp.decay}
  \Vert \Lambda^\alpha v(t)\Vert_{L^2}
  \le C\langle t\rangle^{-\frac{N+2}{2(2-\alpha)}}
  \Big(\Vert (a_0,u_0)\Vert_{L^1\cap \dot H^\alpha} + \big(U_0U_\alpha + U_0 U_1+ E V\big)(t)\Big).
\end{align}
Hence we obtain the desired result \eqref{eq.tildeUalpha} by combining the estimates \eqref{es:Lamb-Pu-L2} and \eqref{eq.sigma.Halp.decay}.

Next for every $\alpha\in[\frac{2N}{3N+2},1]$, we show that
\begin{align}\label{eq.Pu.refinied}
  \sup_{0\le \tau\le t}\langle \tau\rangle^{\frac{N}{2\alpha}}\Vert \mathbb{P}u(\tau)\Vert_{L^2}
  \le C\Vert (a_0,u_0)\Vert_{L^2\cap L^1} + C\big(U_0\widetilde{U}_\alpha+U_0U_{1}+VU_1\big)(t).
\end{align}
Indeed, noting that $\frac{N}{2\alpha}\le \frac{N+1}{2-\alpha}$ for every $\alpha\in[\frac{2N}{3N+2},1]$,
and using Lemmas \ref{lem:heat.decay}, \ref{lem:convolution} and \eqref{eq.tildeH.L1}, we get
\begin{align*}
  \int_0^t\Vert e^{-\mu (t-\tau)\Lambda^{\alpha}}H(\tau)\Vert_{L^2}\dd \tau
  &\le C\int_0^t\langle t-\tau\rangle^{-\frac{N}{2\alpha}}\Vert \widetilde{H}(\tau)\Vert_{L^2\cap L^1}\dd \tau\notag\\
  & \le C\int_0^t\langle t-\tau\rangle^{-\frac{N}{2\alpha}}
  \big(\Vert \Lambda^\alpha u\Vert_{L^2} \Vert a\Vert_{L^2\cap L^\infty}
  + \Vert u\Vert_{L^2\cap L^\infty} \Vert\nabla u\Vert_{L^2}\big)\dd \tau \\
  &\le C\int_0^t\langle t-\tau\rangle^{-\frac{N}{2\alpha}}
  \langle \tau\rangle^{-\frac{N+1}{2-\alpha}}\dd \tau \big(U_0\widetilde{U}_\alpha + V \widetilde{U}_\alpha
  + U_0U_1 + VU_1\big)(t) \notag \\
  &\le C\langle t\rangle^{-\frac{N}{2\alpha}} \big(U_0\widetilde{U}_\alpha + V \widetilde{U}_\alpha+U_0U_1 + VU_1\big)(t),
\end{align*}
with combined with the estimate \eqref{eq.Pu1.decay} yields \eqref{eq.Pu.refinied}.

Finally, collecting \eqref{eq.tildeU0}, \eqref{eq.tildeUalpha} and \eqref{eq.Pu.refinied} and using Proposition \ref{Prop:upper.rough},
we deduce the desired estimates \eqref{eq.decay.refined}-\eqref{eq.decay.refined2}.
\end{proof}


%
%

%
%
%

\subsection{The lower bound}
In this subsection we shall establish a lower bound of decay rate for the global solution $(a,u)$ associated with a class of initial data.
The decay rate of the lower bound is the same with that of the upper bound in Propositions \ref{Prop:upper.rough}, \ref{Prop:upper.refined} and thus it is indeed optimal.
\begin{proposition}\label{Prop:lower}
Let $\alpha\in(0,1)$ and $N\ge 2$.
Suppose that $(a_0,u_0) \in H^s \cap L^1(\R^N)$ with $s > \frac{N}{2}+1$.
If additionally $\int_{\R^N} a_0\dd x\neq 0$ and $\int_{\R^N} \rho_0 u_0\dd x\neq 0$,
there exists a constant $C_0>0$ such that
\begin{equation}\label{eq.decay.lower}
\begin{aligned}
  &\Vert a(t)\Vert_{L^2}\ge C_0\langle t\rangle^{-\frac{N}{2(2-\alpha)}},\quad \text{and }
  &\Vert u(t)\Vert_{L^2}\ge C_0\langle t\rangle^{-\frac{N+2(1-\alpha)}{2(2-\alpha)}}.
\end{aligned}
\end{equation}
\end{proposition}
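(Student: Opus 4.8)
The plan is to start from the Duhamel representation \eqref{eq.CP} for the pair $(a,v)$ with $v=\Lambda^{-1}\Div u$, split each component into the part coming from the initial data and the Duhamel (nonlinear) part, and show that the initial-data contributions carried by the Green-matrix entries $\widehat{\mathcal G}_{11}$ (for $a$) and $\widehat{\mathcal G}_{21}$ (for $v$) dominate every remaining term as $t\to\infty$. For small times I would instead invoke conservation laws: integrating the continuity equation gives $\int_{\R^N}a(t)\dd x=\int_{\R^N}a_0\dd x\neq0$, and integrating the momentum equation (the integrals of $\Div(\rho u\otimes u)$, $\nabla P(\rho)$ and $\mathcal D(u,\rho)$ all vanish, the last by the antisymmetry of the alignment kernel) gives $\int_{\R^N}\rho u(t)\dd x=\int_{\R^N}\rho_0u_0\dd x\neq0$. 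Hence $\|a(t)\|_{L^2}$ and $\|u(t)\|_{L^2}$ are continuous and strictly positive on $[0,\infty)$, so they are bounded below by a positive constant on every compact interval, and it suffices to prove \eqref{eq.decay.lower} for $t\geq T_0$ with $T_0$ large.

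For the leading term I would first record pointwise lower bounds on the Green matrix in the low-frequency region $D=\{|\xi|^{1-\alpha}<\mu/(2\sqrt\gamma)\}$, where by \eqref{def:lamb-pm}, \eqref{eq:lamb-pm-low} the eigenvalues are real and negative with $\lambda_-\asymp-|\xi|^{2-\alpha}$, $\lambda_+\asymp-|\xi|^{\alpha}$ and $\lambda_-/(\lambda_+-\lambda_-)>0$: from \eqref{eq.green11} this yields $\widehat{\mathcal G}_{11}(t,\xi)\ge e^{t\lambda_-}\ge e^{-Ct|\xi|^{2-\alpha}}$, and $\widehat{\mathcal G}_{21}(t,\xi)=\gamma|\xi|\,\frac{e^{t\lambda_-}-e^{t\lambda_+}}{\lambda_--\lambda_+}\ge c\,|\xi|^{1-\alpha}e^{-Ct|\xi|^{2-\alpha}}\bigl(1-e^{-ct|\xi|^{\alpha}}\bigr)$ on a ball $|\xi|\le\delta$. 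Since $a_0\in L^1$, $\widehat{a_0}$ is continuous and $\widehat{a_0}(0)=\int_{\R^N}a_0\dd x\neq0$, so $|\widehat{a_0}(\xi)|\ge c_0>0$ on $|\xi|\le\delta$ after shrinking $\delta$. Restricting the $L^2$ integral to $|\xi|\le\delta$ for $\widehat{\mathcal G}_{11}$, and to the annulus $|\xi|\sim t^{-1/(2-\alpha)}$ for $\widehat{\mathcal G}_{21}$ — where $t|\xi|^{2-\alpha}\sim1$ while $t|\xi|^{\alpha}\sim t^{\frac{2-2\alpha}{2-\alpha}}\to\infty$, so the cutoff factor $1-e^{-ct|\xi|^{\alpha}}$ is bounded below — and rescaling $\xi\mapsto t^{1/(2-\alpha)}\xi$, I obtain for $t\geq T_1$
\[
  \|\widehat{\mathcal G}_{11}(t)\widehat{a_0}\|_{L^2}\geq c\,\langle t\rangle^{-\frac{N}{2(2-\alpha)}},\qquad
  \|\widehat{\mathcal G}_{21}(t)\widehat{a_0}\|_{L^2}\geq c\,\langle t\rangle^{-\frac{N+2(1-\alpha)}{2(2-\alpha)}}.
\]

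It then remains to check that every other term in \eqref{eq.CP} decays strictly faster. By Lemma \ref{lem:GM.L2} with $s=0$ (bounding $|\widehat{v_0}|\le|\widehat{u_0}|$), the remaining initial-data terms $\widehat{\mathcal G}_{12}\widehat{v_0}$ and $\widehat{\mathcal G}_{22}\widehat{v_0}$ are $O\big(\langle t\rangle^{-\frac{N+2(1-\alpha)}{2(2-\alpha)}}\big)$ and $O\big(\langle t\rangle^{-\frac{N+4(1-\alpha)}{2(2-\alpha)}}\big)$, which for $\alpha\in(0,1)$ are $o\big(\langle t\rangle^{-\frac{N}{2(2-\alpha)}}\big)$ and $o\big(\langle t\rangle^{-\frac{N+2(1-\alpha)}{2(2-\alpha)}}\big)$. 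For the Duhamel terms I would reuse the computations of Propositions \ref{Prop:upper.rough}--\ref{Prop:upper.refined}: since $\widehat F=i\xi\cdot\widehat{au}$ carries a factor $\xi$, the kernels $\widehat{\mathcal G}_{11}(t-\tau)\widehat F$ and $\widehat{\mathcal G}_{21}(t-\tau)\widehat F$ gain one extra order of decay in $\langle t-\tau\rangle$, while the \emph{already-established refined rates} \eqref{eq.decay.refined} give $\|au\|_{L^1\cap\dot H^1}\lesssim\langle\tau\rangle^{-\frac{N+1-\alpha}{2-\alpha}}$ and $\|\widetilde G\|_{L^1\cap L^2}\lesssim\langle\tau\rangle^{-\frac{N+1}{2-\alpha}}$ with $\widetilde G$ as in \eqref{def:tild-G}; inserting these into Lemma \ref{lem:convolution} — the relevant maximal exponent always exceeds $1$ for $N\ge2$, so there is no logarithmic loss — shows the Duhamel part of $a$ is $O\big(\langle t\rangle^{-\frac{N+2(1-\alpha)}{2(2-\alpha)}}\big)$ and that of $v$ is $O\big(\langle t\rangle^{-\frac{N+4(1-\alpha)}{2(2-\alpha)}}\big)$, again $o$ of the respective main terms. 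Combining, for $t\geq T_0$ large, $\|a(t)\|_{L^2}\ge\|\widehat{\mathcal G}_{11}(t)\widehat{a_0}\|_{L^2}-o\big(\langle t\rangle^{-\frac{N}{2(2-\alpha)}}\big)\ge\tfrac c2\langle t\rangle^{-\frac{N}{2(2-\alpha)}}$, and since $\|u(t)\|_{L^2}\ge\|v(t)\|_{L^2}$ by orthogonality of the Helmholtz decomposition, likewise $\|u(t)\|_{L^2}\ge\tfrac c2\langle t\rangle^{-\frac{N+2(1-\alpha)}{2(2-\alpha)}}$; together with the compact-interval bound this yields \eqref{eq.decay.lower}.

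The main difficulty is precisely this last step, ruling out a cancellation between the leading linear term and the Duhamel correction. In contrast with genuinely parabolic problems, the rough estimate $\|(a,u)\|_{L^2}\lesssim\langle t\rangle^{-N/(2(2-\alpha))}$ alone would only place the Duhamel part of $a$ at the \emph{same} order as the main term; it is essential to have first the refined velocity rate $\|u\|_{L^2}\lesssim\langle t\rangle^{-(N+2(1-\alpha))/(2(2-\alpha))}$ of Proposition \ref{Prop:upper.refined} (which makes $\|au\|_{L^1}$ decay fast enough) and to use $\alpha<1$ throughout, since this is exactly what makes all the needed exponent gaps — such as $r_2(\alpha)>r_1(\alpha)$, $\tfrac N{2\alpha}>r_2(\alpha)$, and $r_1(\alpha)<1$ when $N=2$ — strictly positive; at $\alpha=1$ the argument degenerates, consistent with the hypothesis $\alpha\in(0,1)$.
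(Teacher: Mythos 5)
Your proposal is correct and follows essentially the same route as the paper: low-frequency pointwise lower bounds for $\widehat{\mathcal{G}}_{11}$ and $\widehat{\mathcal{G}}_{21}$ combined with the continuity of $\widehat{a_0}$ at $\xi=0$ (the paper's Lemmas \ref{lem.lower.bound.G11G21} and \ref{lem.lower.bound.G11G21.L2}), a triangle inequality in the Duhamel formula \eqref{eq.CP} where the remaining linear and nonlinear terms are shown to decay strictly faster via the already-established upper bounds, the orthogonality $\Vert u\Vert_{L^2}\ge\Vert v\Vert_{L^2}$, and conservation of $\int a$ and $\int \rho u$ to cover finite times. The only (harmless) deviations are cosmetic: you localize the $\widehat{\mathcal{G}}_{21}$ lower bound to an annulus $\vert\xi\vert\sim t^{-1/(2-\alpha)}$ instead of integrating over the whole small ball, and you spell out more explicitly than the paper that the refined rates of Proposition \ref{Prop:upper.refined} are what make the Duhamel corrections subordinate to the leading term.
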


In the previous subsections, we notice that the nonlinear terms decay faster than the linear ones.
For the large time, we thus expect that the influence of the linear terms dominates the nonlinear ones.
The following two lemmas provide the analysis of some linear terms.
\begin{lemma}\label{lem.lower.bound.G11G21}
Let $0<\alpha<1$. Suppose $\vert\xi\vert^{1-\alpha}<\mu/(2\sqrt\gamma)$, namely $\xi\in D$ where $D$ is defined in \eqref{eq:D}. Then we have
\begin{align}\label{eq.lower.bound.G11G21}
  &\vert \widehat{\mathcal{G}}_{11}(t,\xi) \vert \ge Ce^{-C\vert\xi\vert^{2-\alpha}t},\quad
  \vert \widehat{\mathcal{G}}_{21}(t,\xi) \vert \ge C\vert\xi\vert^{1-\alpha}
  \left(e^{-\frac{2\gamma}{\mu}t\vert\xi\vert^{2-\alpha}}-e^{-\frac{4\gamma}{\mu}t\vert\xi\vert^{2-\alpha}}\right).
\end{align}
\end{lemma}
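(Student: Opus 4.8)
The plan is to read both lower bounds off the explicit formulas \eqref{eq:Green1}, mirroring the structure of the proof of Lemma \ref{lem.point.G.low-high} but now keeping \emph{sharp lower} estimates. The only inputs beyond the formulas are the following elementary facts for $\xi\in D$, obtained by rationalizing exactly as in that proof: $\lambda_\pm$ are real with $\lambda_+<\lambda_-<0$,
\[
  -\mu|\xi|^\alpha\le\lambda_+\le-\tfrac\mu2|\xi|^\alpha,\qquad -\tfrac{2\gamma}{\mu}|\xi|^{2-\alpha}\le\lambda_-\le-\tfrac\gamma\mu|\xi|^{2-\alpha},\qquad 0\le\Delta:=\lambda_--\lambda_+=\sqrt{\mu^2|\xi|^{2\alpha}-4\gamma|\xi|^2}\le\mu|\xi|^\alpha,
\]
together with the defining constraint $|\xi|^{2-2\alpha}<\mu^2/(4\gamma)$ of $D$ (see \eqref{eq:D}).

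For $\widehat{\mathcal{G}}_{11}$ I would use the representation \eqref{eq.green11}, equivalently $\widehat{\mathcal{G}}_{11}(t,\xi)=\big(ae^{-bt}-be^{-at}\big)/(a-b)$ with $a:=-\lambda_+>b:=-\lambda_->0$, combined with the elementary inequality $ae^{-bt}-be^{-at}>(a-b)e^{-bt}$, valid for $a>b>0$ and $t>0$ because $e^{-at}<e^{-bt}$ forces $be^{-at}<be^{-bt}$. This immediately yields $\widehat{\mathcal{G}}_{11}(t,\xi)>e^{t\lambda_-}\ge e^{-\frac{2\gamma}{\mu}t|\xi|^{2-\alpha}}$, which is the first estimate (positivity also removes the absolute value).

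For $\widehat{\mathcal{G}}_{21}$ the crucial point is to work with the integral representation \eqref{eq.green21}, i.e. $\widehat{\mathcal{G}}_{21}(t,\xi)=\gamma|\xi|\,t\int_0^1 e^{t(\theta\lambda_++(1-\theta)\lambda_-)}\,d\theta>0$, whose integrand equals $e^{t(\lambda_--\theta\Delta)}$. Restricting the $\theta$-integral to $[0,\theta_1]$ with $\theta_1:=\min\{1,(t\Delta)^{-1}\}$, so that $\theta\Delta\le\theta_1\Delta\le t^{-1}$ on that range, gives
\[
  \widehat{\mathcal{G}}_{21}(t,\xi)\ \ge\ e^{-1}\gamma|\xi|\,t\theta_1\,e^{t\lambda_-}\ =\ e^{-1}\gamma|\xi|\min\{t,\Delta^{-1}\}\,e^{t\lambda_-}\ \ge\ e^{-1}\gamma|\xi|\min\{t,(\mu|\xi|^\alpha)^{-1}\}\,e^{-\frac{2\gamma}{\mu}t|\xi|^{2-\alpha}},
\]
using $\Delta\le\mu|\xi|^\alpha$ and $\lambda_-\ge-\frac{2\gamma}{\mu}|\xi|^{2-\alpha}$. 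It then remains to compare with the target \eqref{eq.lower.bound.G11G21}: bounding its right-hand side by $C|\xi|^{1-\alpha}\min\{1,\tfrac{2\gamma}{\mu}t|\xi|^{2-\alpha}\}e^{-\frac{2\gamma}{\mu}t|\xi|^{2-\alpha}}$ via $e^{-x}-e^{-y}\le\min\{1,y-x\}\,e^{-x}$, a short case check on whether $\mu t|\xi|^\alpha\le1$ — using $|\xi|^{2-2\alpha}<\mu^2/(4\gamma)$ to absorb the surplus powers of $|\xi|$ into the constant — closes the argument with $C=C(\alpha,N,\mu,\gamma)>0$.

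The main obstacle is the degeneracy near the outer boundary of $D$, where $\Delta=\lambda_--\lambda_+\to0$ and the two exponential modes $e^{t\lambda_\pm}$ coalesce: discarding the $\Delta^{-1}$ prefactor (i.e. only using $\widehat{\mathcal{G}}_{21}\ge\frac\gamma\mu|\xi|^{1-\alpha}(e^{t\lambda_-}-e^{t\lambda_+})$) loses a factor comparable to $\min\{1,t\Delta\}$, and the resulting bound is too weak there when $\Delta$ is small. The integral representation retains precisely this factor, making the estimate uniform throughout $D$; equivalently one may use $\widehat{\mathcal{G}}_{21}(t,\xi)=\frac{2\gamma|\xi|}{\Delta}e^{-\frac{\mu|\xi|^\alpha}{2}t}\sinh(\tfrac{\Delta t}{2})$ together with $\sinh x\ge x$. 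All the remaining manipulations are routine.
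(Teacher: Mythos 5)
Your proof is correct and takes essentially the same route as the paper: $\widehat{\mathcal{G}}_{11}$ is bounded below through \eqref{eq.green11} by $e^{t\lambda_-}\ge e^{-\frac{2\gamma}{\mu}t\vert\xi\vert^{2-\alpha}}$, and $\widehat{\mathcal{G}}_{21}$ through the integral representation \eqref{eq.green21}, which is precisely how the paper retains the factor that degenerates as $\lambda_+\to\lambda_-$. The only difference is the final bookkeeping: the paper inserts $\lambda_+\ge-\mu\vert\xi\vert^{\alpha}$ and $\lambda_-\ge-\frac{2\gamma}{\mu}\vert\xi\vert^{2-\alpha}$ into the integrand, evaluates the $\theta$-integral in closed form and uses $\mu\vert\xi\vert^{\alpha}>\frac{4\gamma}{\mu}\vert\xi\vert^{2-\alpha}$ on $D$ to land directly on the stated difference of exponentials, whereas you truncate at $\theta_1=\min\{1,(t\Delta)^{-1}\}$ and finish with a case check — both work (though your parenthetical $\sinh x\ge x$ shortcut alone would be too weak when $t\Delta\gg1$, your main argument does not rely on it).
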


\begin{proof}[Proof of Lemma \ref{lem.lower.bound.G11G21}]
From the definition of $\lambda_{\pm}$ in \eqref{def:lamb-pm}, we easily deduce that for every $\vert\xi\vert^{1-\alpha}<\mu/(2\sqrt\gamma)$,
\begin{align}\notag
  \lambda_{\pm}\le 0, \quad e^{t\lambda_{-}}>0\quad\text{and}
  \quad\frac{e^{t\lambda_{+}}-e^{t\lambda_{-}}}{\lambda_{+}-\lambda_{-}} \ge 0.
\end{align}
Thus by virtue of equality \eqref{eq.green11} and $\lambda_-\ge -2\frac{\gamma}{\mu}\vert\xi\vert^{2-\alpha}$, we have
\begin{align}\notag
  \vert\widehat{\mathcal{G}}_{11}(t,\xi)\vert
  =e^{t\lambda_{-}} -\frac{\lambda_{-}(e^{t\lambda_{+}}-e^{t\lambda_{-}})}{\lambda_{+}-\lambda_{-}} \ge e^{t\lambda_{-}}
  \ge e^{-\frac{2\gamma}{\mu}t\vert\xi\vert^{2-\alpha}}.
\end{align}
Due to that $\lambda_+\ge -\mu\vert\xi\vert^{\alpha}$ and $\mu\vert\xi\vert^{\alpha}>\frac{4\gamma}{\mu}\vert\xi\vert^{2-\alpha}$, we get
\begin{align}
  \notag\int_0^1e^{t(\theta\lambda_{+}+(1-\theta)\lambda_{-})}\dd \theta
  &\ge \int_0^{1}e^{-t(\theta{\mu}\vert\xi\vert^{\alpha}+2(1-\theta)\frac{\gamma}{\mu}\vert\xi\vert^{2-\alpha})}\dd \theta\\
  &\notag= t^{-1}({\mu}\vert\xi\vert^{\alpha}-\frac{2\gamma}{\mu}\vert\xi\vert^{2-\alpha})^{-1}
  \left(e^{-\frac{2\gamma}{\mu}t\vert\xi\vert^{2-\alpha}}-e^{-t{\mu}\vert\xi\vert^{\alpha}}\right)\\
  &\notag\ge \frac{1}{\mu}t^{-1}\vert\xi\vert^{-\alpha}
  \left(e^{-\frac{2\gamma}{\mu}t\vert\xi\vert^{2-\alpha}}-e^{-\frac{4\gamma}{\mu}t\vert\xi\vert^{2-\alpha}}\right).
\end{align}
Inserting the above estimate into \eqref{eq.green21} and  \eqref{eq:Green1}, we obtain the lower bound of $\vert \widehat{\mathcal{G}}_{21}\vert$ in \eqref{eq.lower.bound.G11G21}.
\end{proof}


\begin{lemma}\label{lem.lower.bound.G11G21.L2}
Let $0<\alpha<1$.
Suppose that $f\in L^2\cap L^1(\R^N)$ and $\widehat{f}(0) = \int_{\R^N} f(x) \dd x\neq 0$.
Then we have for every $t\ge 1$,
\begin{align}\label{eq:G-dec-lower}
  &\Vert \widehat{\mathcal{G}}_{11}(t)\widehat{f}\Vert_{L^2} \ge C\langle t\rangle^{-\frac{N}{2(2-\alpha)}},\quad
  \Vert \widehat{\mathcal{G}}_{21}(t)\widehat{f}\Vert_{L^2} \ge C\langle t\rangle^{-\frac{N+2(1-\alpha)}{2(2-\alpha)}}.
\end{align}
\end{lemma}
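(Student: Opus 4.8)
The plan is to obtain both lower bounds by restricting the frequency-side $L^2$ integral to a neighbourhood of the origin on which, simultaneously, $\widehat f$ stays bounded away from zero and the pointwise lower bounds of Lemma~\ref{lem.lower.bound.G11G21} are available. Since $f\in L^1(\R^N)$, the transform $\widehat f$ is continuous, and because $\widehat f(0)=\int_{\R^N}f\,\dd x\neq0$ there is a radius $r_0>0$, which we also take small enough that $\{|\xi|\le r_0\}\subset D$ (with $D$ as in \eqref{eq:D}), such that $|\widehat f(\xi)|\ge\tfrac12|\widehat f(0)|$ whenever $|\xi|\le r_0$. All constants below may depend on $\alpha,N,\mu,\gamma,|\widehat f(0)|,r_0$ but not on $t$.

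For $\widehat{\mathcal G}_{11}$, I would use the bound $|\widehat{\mathcal G}_{11}(t,\xi)|\ge Ce^{-Ct|\xi|^{2-\alpha}}$ on $D$ from \eqref{eq.lower.bound.G11G21} to write, for $t\ge1$,
\begin{align*}
  \|\widehat{\mathcal G}_{11}(t)\widehat f\|_{L^2}^2
  &\ge \int_{\{|\xi|\le r_0\}}|\widehat{\mathcal G}_{11}(t,\xi)|^2|\widehat f(\xi)|^2\,\dd\xi \\
  &\ge \tfrac{C}{4}|\widehat f(0)|^2\int_{\{|\xi|\le r_0\}}e^{-2Ct|\xi|^{2-\alpha}}\,\dd\xi.
\end{align*}
The rescaling $\xi=t^{-1/(2-\alpha)}\eta$ turns the last integral into $t^{-N/(2-\alpha)}\int_{\{|\eta|\le r_0 t^{1/(2-\alpha)}\}}e^{-2C|\eta|^{2-\alpha}}\,\dd\eta$, which for $t\ge1$ is at least $t^{-N/(2-\alpha)}\int_{\{|\eta|\le r_0\}}e^{-2C|\eta|^{2-\alpha}}\,\dd\eta\gtrsim t^{-N/(2-\alpha)}$. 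Hence $\|\widehat{\mathcal G}_{11}(t)\widehat f\|_{L^2}\gtrsim t^{-\frac{N}{2(2-\alpha)}}\sim\langle t\rangle^{-\frac{N}{2(2-\alpha)}}$ for $t\ge1$, which is the first inequality of \eqref{eq:G-dec-lower}.

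For $\widehat{\mathcal G}_{21}$ I would instead integrate over the thin shell $A_t\triangleq\{\xi:\delta_1 t^{-1/(2-\alpha)}\le|\xi|\le\delta_2 t^{-1/(2-\alpha)}\}$ with fixed $0<\delta_1<\delta_2\le r_0$, so that $A_t\subset\{|\xi|\le r_0\}\subset D$ for every $t\ge1$. On $A_t$ one has $t|\xi|^{2-\alpha}\in[\delta_1^{2-\alpha},\delta_2^{2-\alpha}]$, a fixed compact subset of $(0,\infty)$, so that $e^{-\frac{2\gamma}{\mu}t|\xi|^{2-\alpha}}-e^{-\frac{4\gamma}{\mu}t|\xi|^{2-\alpha}}\ge c_*>0$ there; combined with $|\xi|^{1-\alpha}\ge\delta_1^{1-\alpha}t^{-\frac{1-\alpha}{2-\alpha}}$ and \eqref{eq.lower.bound.G11G21} this yields $|\widehat{\mathcal G}_{21}(t,\xi)|\gtrsim t^{-\frac{1-\alpha}{2-\alpha}}$ on $A_t$. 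Since $|A_t|=\omega_N(\delta_2^N-\delta_1^N)\,t^{-\frac{N}{2-\alpha}}$ and $|\widehat f|\ge\tfrac12|\widehat f(0)|$ on $A_t$, we get
\begin{align*}
  \|\widehat{\mathcal G}_{21}(t)\widehat f\|_{L^2}^2
  &\ge \int_{A_t}|\widehat{\mathcal G}_{21}(t,\xi)|^2|\widehat f(\xi)|^2\,\dd\xi \\
  &\gtrsim t^{-\frac{2(1-\alpha)}{2-\alpha}}|A_t|
  \gtrsim t^{-\frac{N+2(1-\alpha)}{2-\alpha}},\qquad t\ge1,
\end{align*}
that is, $\|\widehat{\mathcal G}_{21}(t)\widehat f\|_{L^2}\gtrsim\langle t\rangle^{-\frac{N+2(1-\alpha)}{2(2-\alpha)}}$, which is the second inequality of \eqref{eq:G-dec-lower}. (The case $\alpha=1$ is not needed here since Lemma~\ref{lem.lower.bound.G11G21} is stated for $\alpha\in(0,1)$.)

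The only step requiring genuine care is the bookkeeping of the thresholds: $r_0$, and hence $\delta_2$, must be chosen small enough — depending solely on the modulus of continuity of $\widehat f$ at the origin and on the radius of the low-frequency ball $D$ — so that the shell $A_t$ remains inside $D\cap\{|\widehat f|\ge\tfrac12|\widehat f(0)|\}$ for \emph{all} $t\ge1$ at once. This is automatic because the relevant frequencies scale like $|\xi|\sim t^{-1/(2-\alpha)}$ and therefore only contract toward $0$ as $t$ grows, so no uniformity is lost; everything else is elementary scaling. These two lower bounds are exactly what Proposition~\ref{Prop:lower} needs: in the Duhamel representation \eqref{eq.CP} the initial-data contributions involving $\widehat{a_0}$ (recall $\widehat{a_0}(0)=\int a_0\neq0$) produce the claimed lower bounds, while the remaining linear terms and the nonlinear Duhamel integrals, already shown to decay strictly faster in Propositions~\ref{Prop:upper.rough}--\ref{Prop:upper.refined}, are absorbed.
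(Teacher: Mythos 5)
Your proof is correct and follows essentially the same route as the paper: continuity of $\widehat f$ with $\widehat f(0)\neq0$ gives a low-frequency ball inside $D$ on which $|\widehat f|$ is bounded below, and the pointwise lower bounds of Lemma \ref{lem.lower.bound.G11G21} plus the scaling $|\xi|\sim t^{-1/(2-\alpha)}$ yield the two rates. The only cosmetic difference is that for $\widehat{\mathcal G}_{21}$ you integrate over a self-similar shell where the kernel is pointwise of size $t^{-\frac{1-\alpha}{2-\alpha}}$, whereas the paper rescales the integral over the whole small ball; both computations are equivalent.
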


\begin{proof}[Proof of Lemma \ref{lem.lower.bound.G11G21.L2}]
The dominated convergence theorem and $f\in L^1(\R^N)$ imply $\widehat{f}\in C(\R^N)$.
Thus from $\widehat{f}(0)\neq 0$, there exist two positive constants $M>0$ and $0<\delta<{\mu^{\frac{1}{1-\alpha}}}/{(2\sqrt\gamma)^{\frac{1}{1-\alpha}}}$ such that $\vert\widehat{f}(\xi)\vert\ge M$
for every $\vert\xi\vert\le \delta$.
Owing to Lemma \ref{lem.lower.bound.G11G21}, we have that for every $t\ge 1$,
\begin{align}
  \Vert \widehat{\mathcal{G}}_{11}(t)\widehat{f}\Vert_{L^2}^2
  &\ge C\int_{D} e^{-2C\vert\xi\vert^{2-\alpha}t} \vert\widehat{f}\vert^2\dd \xi
  \ge CM^2\int_{\vert\xi\vert\le\delta} e^{-2C\vert\xi\vert^{2-\alpha}t} \dd \xi\notag\\
  &\ge CM^2t^{-\frac{N}{2-\alpha}}\int_{\vert\eta\vert\le\delta } e^{-2C\vert\eta\vert^{2-\alpha}} \dd \eta
  \ge C t^{-\frac{N}{2-\alpha}}.\notag
\end{align}
Similarly, for every $t\ge1$, we can similarly estimate $\widehat{\mathcal{G}}_{21}(t)\widehat{f}$ as follows:
\begin{align}
  \Vert \widehat{\mathcal{G}}_{21}(t) \widehat{f} \Vert_{L^2}^2
  &\ge CM^2\int_{\vert\xi\vert\le\delta} \vert\xi\vert^{2(1-\alpha)}
  \left(e^{-\frac{2\gamma}{\mu}t\vert\xi\vert^{2-\alpha}}-e^{-\frac{4\gamma}{\mu}t\vert\xi\vert^{2-\alpha}}\right)^2 \dd \xi\notag\\
  &\ge CM^2t^{-\frac{N+2(1-\alpha)}{2-\alpha}}\int_{\vert\eta\vert\le\delta }  \vert\eta\vert^{2(1-\alpha)}
  \left(e^{-\frac{2\gamma}{\mu}\vert\eta\vert^{2-\alpha}}-e^{-\frac{4\gamma}{\mu}\vert\eta\vert^{2-\alpha}}\right)^2 \dd \eta
  \ge C t^{-\frac{N+2(1-\alpha)}{2-\alpha}}.\notag
\end{align}
Since $t\le \langle t\rangle$, we thus complete the proof of \eqref{eq:G-dec-lower}.
\end{proof}

Based on Lemmas \ref{lem.lower.bound.G11G21} and \ref{lem.lower.bound.G11G21.L2},
we now give the proof of Proposition \ref{Prop:lower}.
\begin{proof}[Proof of Proposition \ref{Prop:lower}]
Taking advantage of \eqref{eq.CP}, \eqref{eq.G12.int.decay}, \eqref{eq.G11.F.decay}, \eqref{eq.G12.G.decay}
and Lemma \ref{lem.lower.bound.G11G21.L2}, we have that for $t\geq 1$,
\begin{align}\label{eq.sigma.L2.lower}
  \Vert a(t)\Vert_{L^2}
  &\ge \Vert\widehat{\mathcal{G}}_{11}(t)\widehat{a_0}\Vert_{L^2}
  -\Vert\widehat{\mathcal{G}}_{12}(t)\widehat{v_0}\Vert_{L^2}
  -\int_0^t\big(\Vert\widehat{\mathcal{G}}_{11}(t-\tau)\widehat{F}(\tau)\Vert_{L^2}
  +\Vert\widehat{\mathcal{G}}_{12}(t-\tau)\widehat{G}(\tau)\Vert_{L^2}\big)\dd \tau\notag\\
  &\ge 2C_1\langle t\rangle^{-\frac{N}{2(2-\alpha)}}-C\Big(\langle t\rangle^{-\frac{N+2(1-\alpha)}{2(2-\alpha)}}
  +\langle t\rangle^{-\frac{N+2}{2(2-\alpha)}}\Big).\notag
\end{align}
We thus find that for large enough $t$,
\begin{align}
\notag
  \Vert a(t)\Vert_{L^2}
  &\ge C_1\langle t\rangle^{-\frac{N}{2(2-\alpha)}}.
\end{align}
By virtue of \eqref{eq.CP}, \eqref{eq.G22.int.decay} \eqref{eq.G21.F.decay}, \eqref{eq.G22.G.decay}
and Lemma \ref{lem.lower.bound.G11G21.L2}, we get that for $t\geq 1$,
\begin{align}\notag
  \Vert v(t)\Vert_{L^2}
  &\ge \Vert\widehat{\mathcal{G}}_{21}(t)\widehat{a_0}\Vert_{L^2}
  -\Vert\widehat{\mathcal{G}}_{22}(t)\widehat{v_0}\Vert_{L^2}
  -\int_0^t \big( \Vert\widehat{\mathcal{G}}_{21}(t-\tau)\widehat{F}(\tau)\Vert_{L^2}
  +\Vert\widehat{\mathcal{G}}_{22}(t-\tau)\widehat{G}(\tau)\Vert_{L^2} \big) \dd \tau\\
  &\ge 2C_1\langle t\rangle^{-\frac{N+2(1-\alpha)}{2(2-\alpha)}}
  - C \langle t\rangle^{-\frac{N+4(1-\alpha)}{2(2-\alpha)}}.\notag
\end{align}
Hence for a sufficiently large $t$ we have
\begin{align}\notag
  \Vert v(t)\Vert_{L^2}
  &\ge C_1\langle t\rangle^{-\frac{N+2(1-\alpha)}{2(2-\alpha)}}.
\end{align}
Recall that $u=\mathbb{P}u-\nabla\Lambda^{-1} v$.
From $\Vert\nabla\Lambda^{-1} v\Vert_{L^2}=\Vert v\Vert_{L^2}$ and $\int_{\R^N} \mathbb{P}u\cdot\nabla\Lambda^{-1} v\dd x=0$, we have
\begin{align}\notag
  \Vert u(t)\Vert_{L^2}
  = \sqrt{\Vert \mathbb{P}u(t)\Vert_{L^2}^2+\Vert \nabla\Lambda^{-1} v(t)\Vert_{L^2}^2}
  \ge \Vert v(t)\Vert_{L^2}
  \ge C_1\langle t\rangle^{-\frac{N+2(1-\alpha)}{2(2-\alpha)}}.
\end{align}

In addition, from the system \eqref{eq.EAS} and \eqref{eq.EAS.a}, we have
$\int_{\R^N} a(t,x) \dd x = \int_{\R^N} a_0(x)\dd x\neq 0$
and the conservation of momentum $\int_{\R^N} \rho u(t,x)\dd x = \int_{\R^N} \rho_0 u_0(x)\dd x\neq 0$.
Therefore, there exists a constant $C_2>0$ such that $\Vert a(t)\Vert_{L^2}\ge C_2$ and $\Vert u(t)\Vert_{L^2}\ge C_2$.
Finally, these inequalities combined with the above estimates lead to \eqref{eq.decay.lower}, as desired.
\end{proof}

\begin{appendix}
\section{The expression of Green's matrix}
Here we give an exposition of the derivation of Green's matrix expression formula \eqref{eq:Green1}-\eqref{eq:Green2}.
The matrix $A$ is similar to its Jordan canonical form.
If $\lambda_+\neq\lambda_-$ (i.e. ${4\gamma}\vert \xi\vert^{2-2\alpha}\neq \mu^2$), we find that $A=SJS^{-1}$ with
\begin{equation*}J=
  \begin{pmatrix}
  \lambda_{+} & 0\\
  0 & \lambda_{-}
  \end{pmatrix},
\quad
S=
  \begin{pmatrix}
  \frac{-\lambda_{-}}{\gamma\vert \xi\vert} &\frac{-\lambda_{+}}{\gamma\vert \xi\vert}\\
  1 & 1
  \end{pmatrix},
  \quad
  S^{-1}
  = \frac{1}{\lambda_+ - \lambda_{-}}\begin{pmatrix}
  \gamma\vert \xi\vert  & \lambda_+\\
  -\gamma\vert \xi\vert  & -\lambda_{-}
  \end{pmatrix}.
\end{equation*}
If $\lambda_+=\lambda_-$ (i.e. ${4\gamma}\vert \xi\vert^{2-2\alpha}=\mu^2$), we have $A=SJS^{-1}$ with
\begin{equation*}
  J=
  \begin{pmatrix}
  -\sqrt\gamma\vert\xi\vert & 1\\
  0 & -\sqrt\gamma\vert\xi\vert
  \end{pmatrix},
  \quad
  S=
  \begin{pmatrix}
  \frac{1}{\sqrt\gamma} &\frac{1}{\gamma\vert\xi\vert}\\
  1 & 0
  \end{pmatrix},
  \quad
  S^{-1}= \begin{pmatrix}
  0 &1\\
  \gamma\vert\xi\vert &-\sqrt\gamma\vert\xi\vert
  \end{pmatrix}.
\end{equation*}
Hence, the fundamental solution matrix $\widehat{\mathcal{G}}(t)=e^{At}=Se^{Jt}S^{-1}$
has the wanted formula \eqref{eq:Green1}-\eqref{eq:Green2}.

\end{appendix}

\bibliographystyle{plain}

\begin{thebibliography}{10}

\bibitem{bahouri2011fourier}
Hajer Bahouri, Jean-Yves Chemin, and Rapha{\"e}l Danchin.
\newblock {\em {F}ourier analysis and nonlinear partial differential
  equations}, volume 343.
\newblock Springer, 2011.

\bibitem{bai2024global}
Xiang Bai, Qianyun Miao, Changhui Tan, and Liutang Xue.
\newblock Global well-posedness and asymptotic behavior in critical spaces for
  the compressible {E}uler system with velocity alignment.
\newblock {\em Nonlinearity}, 37(2):025007, 2024.

\bibitem{black2024asymptotic}
McKenzie Black and Changhui Tan.
\newblock Asymptotic behaviors for the compressible {E}uler system with
  nonlinear velocity alignment.
\newblock {\em Journal of Differential Equations}, 380:198--227, 2024.

\bibitem{carrillo2016critical}
Jos{\'e}~A Carrillo, Young-Pil Choi, Eitan Tadmor, and Changhui Tan.
\newblock Critical thresholds in {1D} {E}uler equations with non-local forces.
\newblock {\em Mathematical Models and Methods in Applied Sciences},
  26(01):185--206, 2016.

\bibitem{chen2021global}
Li~Chen, Changhui Tan, and Lining Tong.
\newblock On the global classical solution to compressible {E}uler system with
  singular velocity alignment.
\newblock {\em Methods and Applications of Analysis}, 28(2):155--174, 2021.

\bibitem{chen2014time}
Qing Chen and Zhong Tan.
\newblock Time decay of solutions to the compressible {E}uler equations with
  damping.
\newblock {\em Kinetic \& Related Models}, 7(4), 2014.

\bibitem{choi2019global}
Young-Pil Choi.
\newblock The global {C}auchy problem for compressible {E}uler equations with a
  nonlocal dissipation.
\newblock {\em Mathematical Models and Methods in Applied Sciences},
  29(01):185--207, 2019.

\bibitem{constantin2020entropy}
Peter Constantin, Theodore~D Drivas, and Roman Shvydkoy.
\newblock Entropy hierarchies for equations of compressible fluids and
  self-organized dynamics.
\newblock {\em SIAM Journal on Mathematical Analysis}, 52(3):3073--3092, 2020.

\bibitem{danchin2000global}
Rapha{\"e}l Danchin.
\newblock Global existence in critical spaces for compressible
  {N}avier--{S}tokes equations.
\newblock {\em Inventiones Mathematicae}, 141(3):579--614, 2000.

\bibitem{danchin2019regular}
Rapha{\"e}l Danchin, Piotr~B Mucha, Jan Peszek, and Bartosz Wr{\'o}blewski.
\newblock Regular solutions to the fractional {E}uler alignment system in the
  {B}esov spaces framework.
\newblock {\em Mathematical Models and Methods in Applied Sciences},
  29(01):89--119, 2019.

\bibitem{do2018global}
Tam Do, Alexander Kiselev, Lenya Ryzhik, and Changhui Tan.
\newblock Global regularity for the fractional {E}uler alignment system.
\newblock {\em Archive for Rational Mechanics and Analysis}, 228(1):1--37,
  2018.

\bibitem{he2017global}
Siming He and Eitan Tadmor.
\newblock Global regularity of two-dimensional flocking hydrodynamics.
\newblock {\em Comptes Rendus Mathematique}, 355(7):795--805, 2017.

\bibitem{karper2015hydrodynamic}
Trygve~K Karper, Antoine Mellet, and Konstantina Trivisa.
\newblock Hydrodynamic limit of the kinetic {C}ucker--{S}male flocking model.
\newblock {\em Mathematical Models and Methods in Applied Sciences},
  25(01):131--163, 2015.

\bibitem{kiselev2008blow}
Alexander Kiselev, Fedor Nazarov, and Roman Shterenberg.
\newblock Blow up and regularity for fractal {B}urgers equation.
\newblock {\em Dynamics of Partial Differential Equations}, 5(3):211--240,
  2008.

\bibitem{kiselev2018global}
Alexander Kiselev and Changhui Tan.
\newblock Global regularity for {1D} {E}ulerian dynamics with singular
  interaction forces.
\newblock {\em SIAM Journal on Mathematical Analysis}, 50(6):6208--6229, 2018.

\bibitem{lear2023global}
Daniel Lear.
\newblock Global existence and limiting behavior of unidirectional flocks for
  the fractional {E}uler {A}lignment system.
\newblock {\em SIAM Journal on Mathematical Analysis}, 55(4):3731--3754, 2023.

\bibitem{lear2021unidirectional}
Daniel Lear and Roman Shvydkoy.
\newblock Unidirectional flocks in hydrodynamic {E}uler alignment system {II}:
  singular models.
\newblock {\em Communications in Mathematical Sciences}, 19(3):807--828, 2021.

\bibitem{lear2022existence}
Daniel Lear and Roman Shvydkoy.
\newblock Existence and stability of unidirectional flocks in hydrodynamic
  {E}uler alignment systems.
\newblock {\em Analysis \& PDE}, 15(1):175--196, 2022.

\bibitem{lee2022sharp}
Yongki Lee and Changhui Tan.
\newblock A sharp critical threshold for a traffic flow model with look-ahead
  dynamics.
\newblock {\em Communications in Mathematical Sciences}, 20(4), 2022.

\bibitem{leslie2023sticky}
Trevor~M Leslie and Changhui Tan.
\newblock Sticky particle {C}ucker-{S}male dynamics and the entropic selection
  principle for the {1D} {E}uler-alignment system.
\newblock {\em Communications in Partial Differential Equations}, pages 1--39,
  2023.

\bibitem{li2019kato}
Dong Li.
\newblock On {K}ato--{P}once and fractional {L}eibniz.
\newblock {\em Revista matem{\'a}tica iberoamericana}, 35(1):23--100, 2019.

\bibitem{li2023global}
Yatao Li, Qianyun Miao, Changhui Tan, and Liutang Xue.
\newblock Global well-posedness and refined regularity criterion for the
  uni-directional {E}uler-alignment system.
\newblock {\em arXiv preprint arXiv:2308.09609}, 2023.

\bibitem{majda2012compressible}
Andrew Majda.
\newblock {\em Compressible fluid flow and systems of conservation laws in
  several space variables}, volume~53.
\newblock Springer Science \& Business Media, 2012.

\bibitem{matsumura1979initial}
Akitaka Matsumura and Takaaki Nishida.
\newblock The initial value problem for the equations of motion of compressible
  viscous and heat-conductive fluids.
\newblock {\em Proceedings of the Japan Academy, Series A, Mathematical
  Sciences}, 55(9):337--342, 1979.

\bibitem{matsumura1980initial}
Akitaka Matsumura and Takaaki Nishida.
\newblock The initial value problem for the equations of motion of viscous and
  heat-conductive gases.
\newblock {\em Journal of Mathematics of Kyoto University}, 20(1):67--104,
  1980.

\bibitem{miao2021global}
Qianyun Miao, Changhui Tan, and Liutang Xue.
\newblock Global regularity for a {1D} {E}uler-alignment system with
  misalignment.
\newblock {\em Mathematical Models and Methods in Applied Sciences},
  31(03):473--524, 2021.

\bibitem{nash1962probleme}
John Nash.
\newblock Le probl{\`e}me de {C}auchy pour les {\'e}quations
  diff{\'e}rentielles d'un fluide g{\'e}n{\'e}ral.
\newblock {\em Bulletin de la Soci{\'e}t{\'e} Math{\'e}matique de France},
  90:487--497, 1962.

\bibitem{serrin1959uniqueness}
James Serrin.
\newblock On the uniqueness of compressible fluid motions.
\newblock {\em Archive for Rational Mechanics and Analysis}, 3(1):271--288,
  1959.

\bibitem{shvydkoy2019global}
Roman Shvydkoy.
\newblock Global existence and stability of nearly aligned flocks.
\newblock {\em Journal of Dynamics and Differential Equations},
  31(4):2165--2175, 2019.

\bibitem{shvydkoy2021dynamics}
Roman Shvydkoy.
\newblock {\em Dynamics and analysis of alignment models of collective
  behavior}.
\newblock Springer, 2021.

\bibitem{shvydkoy2017eulerian}
Roman Shvydkoy and Eitan Tadmor.
\newblock {E}ulerian dynamics with a commutator forcing.
\newblock {\em Transactions of Mathematics and its Applications}, 1(1):tnx001,
  2017.

\bibitem{shvydkoy2017eulerian2}
Roman Shvydkoy and Eitan Tadmor.
\newblock {E}ulerian dynamics with a commutator forcing {II}: Flocking.
\newblock {\em Discrete \& Continuous Dynamical Systems}, 37(11):5503--5520,
  2017.

\bibitem{shvydkoy2018eulerian}
Roman Shvydkoy and Eitan Tadmor.
\newblock {E}ulerian dynamics with a commutator forcing {III}. fractional
  diffusion of order $0< \alpha< 1$.
\newblock {\em Physica D: Nonlinear Phenomena}, 376:131--137, 2018.

\bibitem{sideris2003long}
Thomas~C Sideris, Becca Thomases, and Dehua Wang.
\newblock Long time behavior of solutions to the 3d compressible euler
  equations with damping.
\newblock {\em Communications in Partial Differential Equations},
  28(3-4):795--816, 2003.

\bibitem{solonnikov1977estimates}
Vsevolod~A Solonnikov.
\newblock Estimates for solutions of nonstationary {N}avier-{S}tokes equations.
\newblock {\em Journal of Soviet Mathematics}, 8(4):467--529, 1977.

\bibitem{strain2010asymptotic}
Robert~M Strain.
\newblock Asymptotic stability of the relativistic {B}oltzmann equation for the
  soft potentials.
\newblock {\em Communications in Mathematical Physics}, 300(2):529--597, 2010.

\bibitem{tadmor2014critical}
Eitan Tadmor and Changhui Tan.
\newblock Critical thresholds in flocking hydrodynamics with non-local
  alignment.
\newblock {\em Philosophical Transactions of the Royal Society of London A:
  Mathematical, Physical and Engineering Sciences}, 372(2028):20130401, 2014.

\bibitem{tan2020euler}
Changhui Tan.
\newblock On the {E}uler-alignment system with weakly singular communication
  weights.
\newblock {\em Nonlinearity}, 33(4):1907, 2020.

\bibitem{tan2013global}
Zhong Tan and Yong Wang.
\newblock Global solution and large-time behavior of the {3D} compressible
  {E}uler equations with damping.
\newblock {\em Journal of Differential Equations}, 254(4):1686--1704, 2013.

\bibitem{tan2012large}
Zhong Tan and Guochun Wu.
\newblock Large time behavior of solutions for compressible {E}uler equations
  with damping in $\mathbb{R}^3$.
\newblock {\em Journal of Differential Equations}, 252(2):1546--1561, 2012.

\bibitem{valli1982existence}
Alberto Valli.
\newblock An existence theorem for compressible viscous fluids.
\newblock {\em Annali di Matematica Pura ed Applicata}, 130(1):197--213, 1982.

\bibitem{wang2001pointwise}
Weike Wang and Tong Yang.
\newblock The pointwise estimates of solutions for {E}uler equations with
  damping in multi-dimensions.
\newblock {\em Journal of Differential Equations}, 173(2):410--450, 2001.

\end{thebibliography}

\end{document}